\numberwithin{equation}{section}
\theoremstyle{plain} 
\newtheorem{theorem}{Theorem}[section]
\newtheorem{lemma}[theorem]{Lemma}
\newtheorem{proposition}[theorem]{Proposition}
\newtheorem{assumption}[theorem]{Assumption}
\newtheorem{definition}[theorem]{Definition}
\theoremstyle{remark}
\newtheorem{remark}[theorem]{Remark}
\renewcommand{\Re}{\mathrm{Re}\,}
\renewcommand{\Im}{\mathrm{Im}\,}
\newcommand{\im}{\mathrm{Im}\,}
\def\be{\begin{equation}}
\def\ee{\end{equation}}
\def\ba{\begin{eqnarray*}}
\def\ea{\end{eqnarray*}}
\def\bae{\begin{eqnarray}}
\def\eae{\end{eqnarray}}
\def\bc{\begin{center}}
\def\ec{\end{center}}
\def\i{\mathrm{i}}
\def\Tr{\mathrm{Tr}}
\def\C{\mathbb{C}}
\def\N{\mathbb{N}}
\def\R{\mathbb{R}}
\def\E{\mathbb{E}}
\def\vi{{\bm v_i}}
\def\si{{\bm s_i}}
\def\ti{{\bm t_i}}
\def\ri{{\bm r_i}}
\def\hi{{\bm h_i}}
\def\ei{{\bm e_i}}
\def\wi{{\bm w_i}}
\def\ek{{\bm e_k}}
\def\ej{{\bm e_j}}
\def\gi{{\bm g_i}}
\def\xii{{\bm x_i}}
\def\yi{{\bm y_i}}
\def\yone{{\bm y}_1}
\def\ytwo{{\bm y}_2}
\def\g{\underline{G}}
\def\tb{\widetilde{B}}
\def\<{\langle}
\def\>{\rangle}
\def\ta{\widetilde{A}}
\def\tg{\mathcal{G}}
\def\pzab{\frac{\partial^2}{\partial z_1 \partial z_2}}
\def\pzz{\frac{\partial}{\partial \overline{z}}}
\def\px{\frac{\partial}{\partial x}}
\def\py{\frac{\partial}{\partial y}}
\def\tf{\tilde{f}}
\def\ea{e_{0}(\lambda)}
\def\ud{\underline}
\newcommand{\dd}{\mathrm{d}}
\newcommand{\ii}{\mathrm{i}}
\def\tbhat{\tb^{\<i\>}} 
\def\tbbra{\tb^{(i)}} 
\def\gbra{G^{(i)}} 
\def\fbra{F^{(i)}} 
\def\ghat{G^{\<i\>}} 
\def\fhat{F^{\<i\>}} 
\def\IE{\mathbb{I}\mkern-2mu\mathbb{E}_{\gi}}
\newcommand{\wt}{\widetilde}
\newcommand{\li}{{\langle i\rangle}}
\newcommand{\bi}{{\lbrace i \rbrace}}
\newcommand{\rir}{{[i]}}
\begin{document}
\begin{minipage}{0.85\textwidth}
 \vspace{2.5cm}
 \end{minipage}
\begin{center}
\large\bf
Central limit theorem for mesoscopic eigenvalue statistics\\ of the free sum of matrices
\end{center}

\renewcommand{\thefootnote}{\fnsymbol{footnote}}	
\vspace{1cm}

\begin{center}
 \begin{minipage}{1.1\textwidth}

\hspace{-1.5cm}  \begin{minipage}{0.33\textwidth}
\begin{center}
Zhigang Bao\footnotemark[1]  \\
		\footnotesize 
		{HKUST}\\
{\it mazgbao@ust.hk}
\end{center}
\end{minipage}
\begin{minipage}{0.33\textwidth}
	\begin{center}
		Kevin Schnelli\footnotemark[3]\\
		\footnotesize 
		{KTH Royal Institute of Technology}\\
		{\it schnelli@kth.se}
	\end{center}
\end{minipage}
\begin{minipage}{0.33\textwidth}
\begin{center}
Yuanyuan Xu\footnotemark[2]\\
\footnotesize 
		{KTH Royal Institute of Technology}\\
{\it yuax@kth.se}
\end{center}
\end{minipage}
\end{minipage}
\end{center}

\bigskip

\footnotetext[1]{Supported by the Hong Kong Research Grants Council ECS 26301517, GRF 16300618, and NSFC 11871425.}
\footnotetext[2]{Supported by the Swedish Research Council Grant VR-2017-05195.}
\footnotetext[3]{Supported by  the G\"oran Gustafsson Foundation and the Swedish Research Council Grant VR-2017-05195.}

\renewcommand{\thefootnote}{\fnsymbol{footnote}}	

\vspace{1cm}

\begin{center}
 \begin{minipage}{0.83\textwidth}\footnotesize{
 {\bf Abstract.}
We consider random matrices of the form $H_N=A_N+U_N B_N U^*_N$, where $A_N$, $B_N$ are two $N$ by $N$ deterministic Hermitian matrices and $U_N$ is a Haar distributed random unitary matrix. We establish a universal Central Limit Theorem for the linear eigenvalue statistics of~$H_N$ on all mesoscopic scales inside the regular bulk of the spectrum. The proof is based on studying the characteristic function of the linear eigenvalue statistics, and consists of two main steps: (1) generating Ward identities using the  left-translation-invariance of the Haar measure, along with a local law for the resolvent of $H_N$ and analytic subordination properties of the free additive convolution, allow us to derive an explicit formula for the derivative of the characteristic function; (2) a local law for two-point product functions of resolvents is derived using a partial randomness decomposition of the Haar measure. We also prove the corresponding results for orthogonal conjugations.}
\end{minipage}
\end{center}

 \vspace{5mm}

\thispagestyle{headings}

\section{Introduction}\label{sec_introduction}
In a seminal work Voiculescu~\cite{free_Vol} showed that two large Hermitian matrices are
{\it asymptotically free} if their eigenvectors are in general relative position. In particular, asymptotic freeness identifies the law of the sum of such large Hermitian matrices in terms of their respective spectra. A fundamental mechanism to generate asymptotic freeness is conjugation by independent unitary matrices that are distributed according to Haar measure. To be more specific, if $A_N$ and $B_N$ are two sequences of (deterministic and uniformly bounded) Hermitian matrices, and $U_N$ is a sequence of  Haar unitaries, then $A_N$ and $U_NB_NU_N^*$ are asymptotically free and the eigenvalue distribution of the {\it free sum} $H_N:=A_N+U_NB_NU_N^*$ is given by the {\it free additive convolution}, $\mu_A\boxplus\mu_B$, of the eigenvalue distributions $\mu_A$ of $A_N$, respectively $\mu_B$ of $B_N$, for large $N$.

One way of rephrasing this result is a law of large numbers: For sufficiently regular test functions $g$,
\begin{align}\label{le global law}
 \frac{1}{N}\sum_{i=1}^N g(\lambda_i)-\int_\R g(x)\dd \mu_A\boxplus\mu_B(x)
\end{align}
converges in probability to zero, as $N\rightarrow\infty$.

Having identified the free additive convolution as the limiting eigenvalue distribution, it is a natural question to consider fluctuations of such {\it linear eigenvalue statistics}. The theory of second order freeness developed by Collins, Mingo, \'Sniady, Speicher~\cite{second order 3,second order 1, second order 2} shows for analytic test functions $g$ that
\begin{align}\label{le first clt}
 \sum_{i=1}^N g(\lambda_i)-\sum_{i=1}^N \E g(\lambda_i)
\end{align}
converges in distribution to a centered Gaussian random variable, whose variance depends in an intricate way on the free additive convolution measure; see~\eqref{vf_global} below for an explicit expression for the variance. Using an analytic approach based on resolvent and characteristic function techniques, similar results were obtained by Pastur and Shcherbina in~\cite{random_book}. Conspicuously different from standard central limit theorem (CLT), the linear statistics in~\eqref{le first clt} is not rescaled by $N^{-1/2}$, which is explained by the strong correlations among the eigenvalues. 

In the present paper we are interested in the mesoscopic linear eigenvalue statistics for the free sum of matrices. We choose an energy $E$ inside the support of the free additive convolution measure, a test function $g \in C_c^2(\R)$ and consider the statistics
\begin{align}\label{observable}
 \sum_{i=1}^N g\Big(\frac{\lambda_i-E}{\eta}\Big)-\sum_{i=1}^N \E g\Big(\frac{\lambda_i-E}{\eta}\Big)\,,
\end{align}
where $\eta$ is an $N$-dependent spectral scale. The mesoscopic regime ranges over $N^{-1}\ll \eta\ll 1$, where the sum in~\eqref{observable} includes order $N\eta$ eigenvalues. For $\eta\sim 1$ the random variable~\eqref{observable} agrees with the macroscopic or global observable in~\eqref{le first clt}, while for $\eta\sim N^{-1}$ the sum in~\eqref{observable} is governed by single eigenvalues, where the statistics is determined by Dyson's sine kernel; see~\cite{CL}. 

Our main result shows that in the bulk spectrum the mesoscopic linear statistic~\eqref{observable} converges to a centered Gaussian random variable with variance given by
\begin{align}
 \frac{1}{4 \pi^2} \int_{\R}  \int_{\R}  \frac{(g(x_1)-g(x_2))^2}{(x_1-x_2)^2} \dd x_1 \dd x_2=\frac{1}{2 \pi} \int_{\R} |\xi| |\widehat{g}(\xi)|^2 \dd \xi\,,
\end{align}
which is the universal variance found in many other random matrices models, e.g.\ classical compact groups \cite{Soshnikov}, and invariant ensembles \cite{meso1,FKS}.

Our approach is based on an analysis of the characteristic function of~\eqref{observable} and establishes its convergence to the characteristic function of the limiting Gaussian distribution \cite{character,lytova+pastur}. Our proof has three main ingredients: Invariance properties of the Haar measure in the form of so-called Ward identities; analytic subordination for the free convolution measure; and local laws for two-point product functions of the resolvent.

Ward identities are used to compute the derivative of the characteristic function and will allow us to connect the variance of fluctuations to the analytic subordination phenomenon of free probability. The Stieltjes transform of the free convolution measure can be described by an analytic change of variables from the Stieltjes transforms of the measures $\mu_A$ or $\mu_B$. This is referred to as analytic subordination~\cite{Bia98,Voi93}, and, in fact, may be used to give an analytic definition of the free additive convolution~\cite{belin_H,CG}; see Theorem~\ref{subordination_function} below. The subordination phenomenon carries over to the random matrix model $H_N$. The Green function or resolvent of $H_N$ is determined not only on global scales~\cite{invariant} but also on local scales just above the microsopic scale by the Stieltjes transform of $\mu_A\boxplus\mu_B$. Such local laws giving strong rigidity estimates for the eigenvalues were established in~\cite{eta1,eta2} down to the optimal scale, see also~\cite{stability,concentrate} for previous results on some mesoscopic scales. Local laws also yield optimal speed of convergence estimates for~\eqref{le global law} inside the bulk spectrum and at regular spectral edges~\cite{edge}. In our proof we use stability properties for the subordination equations established in~\cite{stability} and the local laws of~\cite{eta2} to bound various error terms. A main technical difficulty in this paper is to derive systems of self-consistent equations for two-point product functions of resolvents appearing in the variance term for the linear statistics. We rely on a partial randomness decomposition of the Haar measure that was previously used to derive local laws for the resolvent in~\cite{eta1}. This technique allows us to exploit fluctuations on all mesoscopic scales and surpasses more conventional approaches where concentration with respect to the full Haar measure and Ward identities are used.

Mesoscopic linear statistics were studied for Wigner matrices~\cite{meso2, moment,character, mesowigner} and many other random matrix models such as the orthogonal polynomial ensembles \cite{Breuer+Duits}, Dyson Brownian motion \cite{Duits+Johansson,character2}, invariant $\beta$-ensembles \cite{Bekerman+lodhia,BEYY,lambert_1} and random band matrix \cite{erdos+knowles,erdos+knowles2}. Mesoscopic linear statistics are not 
only interesting in their own right, they are also found applications in the theory of homogenization for Dyson's Brownian motion (DBM) introduced by Bourgade, Erd\H{o}s, Yau and Yin~\cite{BEYY} to prove fixed energy universality of the local eigenvalue statistics of Wigner matrices.  Landon, Sosoe and Yau \cite{character2} subsequently derived a mesoscopic CLT to show fixed energy universality of the DBM. Mesoscopic central limit theorems combined with DBM were also used in~\cite{Bourgadegap,logarithm,character} to derive Gaussian fluctuations of single eigenvalues.

This paper is organized as follows. In Section~\ref{sec_main_results}, we introduce the model in more detail and state our main results. We also give an outline of the proof in Subsection~\ref{subsection outline of proof}. We collect some preliminary results, e.g., local stability of the subordination equations and local laws for the Green function, in Section~\ref{sec_preliminaries}. In Sections~\ref{sec_proof_main_theorem} and~\ref{sec_proof_main_lemma} the main arguments of the proofs are given. In the short Section~\ref{sec_expectation} we complement the results by computing the so-called bias. All our methods carry over to the orthogonal setup where one of the matrices is conjugated by Haar orthogonal matrices. The orthogonal case is analyzed separately in Section~\ref{sec_orthogonal}. The proofs of some technical results used in Sections~\ref{sec_proof_main_theorem}--\ref{sec_orthogonal} are postponed to the Appendix.

We conclude this introductory section by collecting some notational conventions used throughout the paper.
We use the following notion for high-probability estimates: 
\begin{definition}\label{definition of stochastic domination}
Let $\mathcal{X}\equiv \mathcal{X}^{(N)}$ and $\mathcal{Y}\equiv \mathcal{Y}^{(N)}$ be two sequences of
 nonnegative random variables. We say~$\mathcal{Y}$ stochastically dominates~$\mathcal{X}$ if, for all (small) $\epsilon>0$ and (large)~$D>0$,
\begin{align}
\mathbb{P}\big(\mathcal{X}^{(N)}>N^{\epsilon} \mathcal{Y}^{(N)}\big)\le N^{-D},
\end{align}
for sufficiently large $N\ge N_0(\epsilon,D)$, and we write $\mathcal{X} \prec \mathcal{Y}$ or $\mathcal{X}=O_\prec(\mathcal{Y})$.
\end{definition}

For any vector ${\bm y} \in \C^N$, denoted by bold font, we use $\|{\bm y}\|_2$ to denote the Euclidean norm. We write ${\bm g}=(g_i)_{i=1}^N \sim N_{\R}(0, \sigma^2 I_N)$ if $g_1, \cdots, g_N$ are i.i.d.\ centered Gaussian random variables $N(0,\sigma^2)$. In the complex case, ${\bm g} \sim N_{\C}(0, \sigma^2 I_N)$ means that $\Re g_i$ and $\Im g_i$ are i.i.d.\ Gaussian random variables $N(0,\frac{1}{2}\sigma^2)$.

For a general random variable $\mathcal{X}$, we denote by
\begin{equation}\label{notation_E}
\<\mathcal{X}\>:=\mathcal{X}-\E[\mathcal{X}]
\end{equation}
its centering. 
 
 For a matrix $X \in \C^{N \times N}$, we denote by $\|X\|_{\mathrm{op}}$ its operator norm and by $\|X\|_{\mathrm{HS}}$ its Hilbert-Schmidt norm. Moreover we use the convention $|X|^2=X^* X$. The normalized trace of $X$ is denoted by
\begin{equation}\label{notation_X}
\ud{X}:=\frac{1}{N} \Tr X\,.
\end{equation}

Finally, we use~$c$ and~$C$ to denote strictly positive constants that are independent of $N$. Their values may change from line to line. We write $X \ll Y$ if there is small $\epsilon>0$ such that $|X|\le N^{-\epsilon}|Y|$ as $N \rightarrow \infty$. We write $X=O(Y) $ if there exists a  constant $C>0$ such that $|X| \leq C |Y|$. We write $X \sim Y$  if there exist constants $c, C>0$ such that $c |Y| \leq |X| \leq C |Y|$. We denote the complex upper half-plane by $\C^+:=\{z\in\C\,:\,\im z>0\}$.

\section{Main results}\label{sec_main_results}

\subsection{Setup}
Consider a sequence of random Hermitian matrices of the form
\begin{equation}\label{H_N}
 H\equiv H_N= A+U B U^*\,,
\end{equation}
where $A \equiv A_N=\mathrm{diag}(a_i)$ and $B \equiv B_N=\mathrm{diag}(b_i)$ are two sequences of $N$ by $N$ deterministic real diagonal matrices, and $U \equiv U_N$ are $N$ by $N$ random unitary matrices distributed according to the Haar measure on the unitary group of order $N$, $U(N)$. Without loss of generality, by shifting with multiples of the identity matrix, we may assume that $\Tr A=\Tr B=0.$

We assume that for a constant $M$, independent of $N$,
\begin{equation}\label{assumption_1}
\sup_{N}\|A\|_{\mathrm{op}} \leq M\,,\quad \sup_N \|B\|_{\mathrm{op}} \leq M\,.
\end{equation}
 The eigenvalues of $H$ are denoted by $(\lambda_i)_{i=1}^N$ in non-decreasing order. The empirical spectral measures of $A$, $B$ and $H$ are denoted by $\mu_A$, $\mu_B$ and $\mu_N$ respectively, i.e.,
$$\mu_A:=\frac{1}{N} \sum_{i=1}^N \delta_{a_i}; \qquad \mu_B:=\frac{1}{N} \sum_{i=1}^N \delta_{b_i}; \qquad \mu_N:=\frac{1}{N} \sum_{i=1}^N \delta_{\lambda_i}.$$ 

We will assume that $\mu_A$ and $\mu_B$ have weak limits as $N$ tends to infinity:
\begin{assumption}\label{assumption_2}
There are deterministic compactly supported Borel probability measures $\mu_{\alpha}$ and $\mu_{\beta}$ on $\R$, neither of them being a single point mass and at least one of them being supported at more than two points, such that $\mu_A$ and $\mu_B$ converge weakly to $\mu_{\alpha}$ and $\mu_{\beta}$, respectively,  as $N \rightarrow \infty$. More precisely, we assume that
\begin{align}\label{levy distance assumption}
d_{\mathrm{L}}(\mu_A, \mu_\alpha) +d_{\mathrm{L}}(\mu_B, \mu_\beta) \rightarrow 0, \qquad N \rightarrow \infty,
\end{align}
where $d_{\mathrm{L}}$ denotes the L\'evy distance.
\end{assumption}
Bercovici and Voiculescu~\cite{BeV93} showed that the free additive convolution is continuous
with respect to weak convergence of measures. More specifically~\eqref{levy distance assumption} implies
\begin{align}\label{le difference levy distance}
d_{\mathrm{L}}(\mu_A \boxplus \mu_B, \mu_\alpha \boxplus \mu_\beta) \leq  d_{\mathrm{L}}(\mu_A, \mu_\alpha)+d_{\mathrm{L}}(\mu_B,\mu_\beta)\,.
\end{align}
The assumption that neither of $\mu_\alpha$, $\mu_\beta$ is a single point mass excludes trivial shifts by multiples of identities. The additional condition in Assumption~\ref{assumption_2} that at least one of them is supported at more than two points is related to the stability of the subordination equations and the arguments of Section~\ref{sec_proof_main_lemma}. Yet, the special case when $\mu_\alpha$ and $\mu_\beta$ are both two-point masses can be treated by combining our methods and results in Section 7 of \cite{stability} and Appendix B of \cite{eta1}.

We next recall the analytic definition of the free additive convolution. For a probability measure $\mu$ on $\R$ denote by $m_\mu$ its {\it Stieltjes transform}, i.e.
\begin{align}\label{stieltjes}
 m_\mu(z):=\int_\R\frac{\dd\mu(x)}{x-z}\,,\qquad z\in\C^+\,.
\end{align}
Note that $m_{\mu}\,:\C^+\rightarrow\C^+$ is analytic and can be analytically extended to the real line outside the support of $\mu$. Moreover, $m_{\mu}$ satisfies 
\begin{equation}\label{m_condition}
\lim_{\eta\nearrow\infty}\ii\eta {m_{\mu}}(\ii \eta)=-1.
\end{equation}
Conversely, if $m\,:\,\C^+\rightarrow\C^+$ is analytic and satisfies $\lim_{\eta\nearrow\infty}\ii\eta m(\ii\eta)=-1$, then $m$ is the Stieltjes transform of a probability measure $\mu$, i.e., $m(z) = m_{\mu}(z)$, for all $z\in\C^+$; see e.g.~\cite{Ak}. For notational simplicity we further introduce the  {\it negative reciprocal Stieltjes transform} of $\mu$ by setting
\begin{equation}\label{stieltjes_inverse}
F_{\mu}(z):=-\frac{1}{m_{\mu}(z)}.
\end{equation}
Note that $F_{\mu}: \C^+ \rightarrow \C^+$ is analytic and satisfies
\begin{equation}\label{F_condition}
\lim_{\eta\nearrow\infty} \frac{F_{\mu}(\ii\eta)}{\ii\eta} =1.
\end{equation}

The free additive convolution of two probability measures on the real line is characterized by the following result.
\begin{theorem}\label{subordination_function}
Given any Borel probability measures $\mu_{\alpha}$ and $\mu_{\beta}$ on $\R$, there exist unique analytic functions, $\omega_{\alpha}, ~\omega_\beta: \C^+ \rightarrow \C^+$ such that
\begin{enumerate}
\item for all $z \in \C^+$, $\Im\omega_\alpha(z), \Im \omega_\beta(z) \geq \Im z$, and
\begin{equation}\label{omega_far}
\lim_{\eta\nearrow\infty} \frac{\omega_{\alpha}(\ii\eta)}{\ii\eta} =\lim_{\eta\nearrow\infty} \frac{\omega_{\beta}(\ii\eta)}{\ii\eta}=1;
\end{equation}
\item for all $z \in \C^+$
\begin{equation}\label{omega_limit}
F_{\mu_\alpha}(\omega_\beta(z))=F_{\mu_\beta}(\omega_\alpha(z)); \qquad \omega_{\alpha}(z)+\omega_{\beta}(z)-z=F_{\mu_\alpha}(\omega_\beta(z)).
\end{equation}
\end{enumerate}
\end{theorem}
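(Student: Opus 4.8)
The plan is to prove this via the analytic characterization of free additive convolution, following the Belinschi--Bercovici fixed-point approach \cite{belin_H,CG}. First I would record the two basic structural facts about negative reciprocal Stieltjes transforms of probability measures (not point masses): if $\mu$ is a probability measure on $\R$, then $F_\mu:\C^+\to\C^+$ is analytic, satisfies $\Im F_\mu(z)\geq \Im z$ for all $z\in\C^+$ (a consequence of the Nevanlinna representation $F_\mu(z)=z+b+\int \frac{1+xz}{x-z}\,\dd\rho(x)$ with $b\in\R$ and $\rho$ a finite positive measure), and satisfies the asymptotic normalization $F_\mu(\ii\eta)/\ii\eta\to 1$ as $\eta\nearrow\infty$. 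Conversely, any analytic $F:\C^+\to\C^+$ with $\Im F(z)\geq\Im z$ and this normalization is the negative reciprocal Stieltjes transform of a unique probability measure. The key extra input is that the reciprocal Nevanlinna class enjoys good behavior under composition: if $\Im F(z)\geq \Im z$ then $F$ maps the half-plane $\{\Im z>\Im w\}$ into itself, which makes the iteration below a self-map of a suitable domain.

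Next I would fix $z\in\C^+$ and set up the fixed-point map on the subordination functions. The natural candidate equations, obtained formally from $F_{\mu_\alpha\boxplus\mu_\beta}(z)=F_{\mu_\alpha}(\omega_\beta(z))=F_{\mu_\beta}(\omega_\alpha(z))$ together with $\omega_\alpha(z)+\omega_\beta(z)-z=F_{\mu_\alpha}(\omega_\beta(z))$, can be rearranged into a single fixed-point equation for, say, $\omega_\alpha$: one checks that \eqref{omega_limit} is equivalent to $\omega_\alpha$ being a fixed point of the map
\begin{equation}\label{fixed_point_map}
 h_z(w):=F_{\mu_\beta}\big(F_{\mu_\alpha}(w)+w-z\big)+z-w\,,
\end{equation}
with $\omega_\beta(z)$ then defined by $\omega_\beta(z):=F_{\mu_\alpha}(\omega_\alpha(z))+\omega_\alpha(z)-z$. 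The analytic work is to show $h_z$ maps $\C^+$ into itself (using $\Im F_\mu(z)\geq\Im z$ twice, so that $\Im(F_{\mu_\alpha}(w)+w-z)\geq \Im w+\Im w-\Im z\geq \Im w$ when $\Im w\geq \Im z$, keeping the argument in the upper half-plane), that it is not an automorphism of $\C^+$ (otherwise one of the measures would be a point mass, excluded by hypothesis), and hence by the Denjoy--Wolff theorem the iterates $h_z^{\circ n}(w_0)$ converge locally uniformly to a constant $\omega_\alpha(z)$ independent of the starting point $w_0$, which is the unique fixed point in $\C^+$. Analyticity of $z\mapsto\omega_\alpha(z)$ follows from locally uniform convergence of the analytic iterates (Vitali/Montel), and the inequality $\Im\omega_\alpha(z)\geq\Im z$ plus the normalization \eqref{omega_far} are read off from the fixed-point equation and the asymptotics of $F_{\mu_\alpha},F_{\mu_\beta}$ at $\ii\infty$; symmetry gives the same for $\omega_\beta$. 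Uniqueness of the pair $(\omega_\alpha,\omega_\beta)$ satisfying (1)--(2) reduces to uniqueness of the fixed point of $h_z$ in $\C^+$, which the Denjoy--Wolff argument already supplies.

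The main obstacle is the dichotomy step: showing that $h_z$ is \emph{not} a conformal automorphism of $\C^+$, which is what licenses the Denjoy--Wolff contraction and hence both existence and uniqueness of the fixed point. This requires using the hypothesis that neither $\mu_\alpha$ nor $\mu_\beta$ is a point mass — for a point mass $\delta_a$ one has $F_{\delta_a}(z)=z-a$, an automorphism, and then $h_z$ could be an automorphism with no fixed point in $\C^+$. One argues that if $h_z$ were an automorphism of $\C^+$ then, tracking the boundary behavior and the condition $\Im F_\mu(z)\geq \Im z$ (equality forcing $\rho\equiv 0$ in the Nevanlinna representation), both $F_{\mu_\alpha}$ and $F_{\mu_\beta}$ would have to be translations, contradicting the assumption; the slightly delicate point is handling the case where only one of them is affine, which is why only one of the two measures is required to be supported at more than two points in the random-matrix application but not here — for this purely measure-theoretic statement, ``neither is a point mass'' suffices. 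Once that is in place, everything else is a routine matter of propagating analyticity and the normalization conditions through the iteration.
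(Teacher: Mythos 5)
A preliminary note: the paper does not prove Theorem~\ref{subordination_function}; it is a cited result (essentially from Belinschi--Bercovici~\cite{belin_H} and Chistyakov--G\"otze~\cite{CG}), so there is no in-paper argument to compare to. Your overall strategy --- Nevanlinna representation of $F_\mu$, an auxiliary analytic self-map of $\C^+$ whose fixed point is the subordination function, Denjoy--Wolff --- is indeed the Belinschi--Bercovici route, but your formulas are wrong. With $H_\mu(w):=F_\mu(w)-w$, eliminating $\omega_\beta$ from~\eqref{omega_limit} via $\omega_\beta=F_{\mu_\beta}(\omega_\alpha)+z-\omega_\alpha$ gives the fixed-point equation $w=g_z(w)$ with $g_z(w)=z+H_{\mu_\alpha}\bigl(z+H_{\mu_\beta}(w)\bigr)=F_{\mu_\alpha}\bigl(F_{\mu_\beta}(w)-w+z\bigr)-F_{\mu_\beta}(w)+w$. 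Your $h_z(w)=F_{\mu_\beta}\bigl(F_{\mu_\alpha}(w)+w-z\bigr)+z-w$ has the composition order and signs scrambled: substituting $w=\omega_\alpha$ into $h_z$ produces $F_{\mu_\alpha}(\omega_\alpha)$, which the subordination system does not constrain (it constrains only $F_{\mu_\alpha}(\omega_\beta)$ and $F_{\mu_\beta}(\omega_\alpha)$), so in general $\omega_\alpha$ is not a fixed point of your $h_z$. Likewise your companion formula $\omega_\beta:=F_{\mu_\alpha}(\omega_\alpha)+\omega_\alpha-z$ should read $\omega_\beta=F_{\mu_\beta}(\omega_\alpha)+z-\omega_\alpha$. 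For a concrete failure take $\mu_\alpha=\mu_\beta=\delta_0$, so that $F_{\mu_\alpha}=F_{\mu_\beta}=\mathrm{id}$ and $\omega_\alpha(z)=\omega_\beta(z)=z$: your $h_z$ reduces to the identity on $\C^+$, every point is a fixed point and uniqueness collapses, whereas $g_z\equiv z$ has exactly the single correct fixed point.

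The second gap is one of scope. Theorem~\ref{subordination_function} holds for \emph{arbitrary} Borel probability measures --- there is no ``not a point mass'' hypothesis in its statement (that assumption lives in Assumption~\ref{assumption_2} and is invoked later for the local law and stability, not for this theorem). You explicitly invoke ``the hypothesis that neither $\mu_\alpha$ nor $\mu_\beta$ is a point mass'' to rule out $h_z$ being an automorphism, so as written your argument proves a strictly weaker statement. With the correct $g_z$ no such dichotomy is needed: since $\Im H_\mu\geq 0$ on $\C^+$ one always has $\Im g_z(w)\geq\Im z$, so $g_z(\C^+)\subset\{\Im w\geq\Im z\}\subsetneq\C^+$ and $g_z$ can never be a conformal automorphism of $\C^+$. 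In the point-mass case, say $\mu_\beta=\delta_b$, $H_{\mu_\beta}\equiv -b$ and $g_z$ is constant, so existence and uniqueness are immediate; otherwise the Denjoy--Wolff point is pinned in $\C^+$ by the bound $\Im g_z\geq\Im z>0$ (excluding $\R$) together with $H_\mu(\ii\eta)=o(\eta)$ as $\eta\nearrow\infty$ (excluding $\infty$). Finally, the remark about ``supported at more than two points'' is a red herring: that too is part of Assumption~\ref{assumption_2} and plays no role in this purely function-theoretic statement.
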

Hence, by (\ref{omega_far}) the function 
$${F}_{\mu_\alpha\boxplus \mu_\beta}(z):=F_{\mu_\alpha}(\omega_\beta(z))=F_{\mu_\beta}(\omega_\alpha(z))$$
 satisfies (\ref{F_condition}) and thus is the negative reciprocal Stieltjes transform of a probability measure, the free additive convolution of $\mu_\alpha$ and $\mu_\beta$. The functions $\omega_\alpha$ and $\omega_\beta$ are referred to as {\it subordination functions}. It was shown by Belinschi \cite{belin_1, belin_2} that if both $\mu_\alpha$ and $\mu_\beta$ are compactly supported probability measures on $\R$ and are supported on more than one point, then ${F}_{\mu_\alpha\boxplus \mu_\beta}$, $\omega_{\alpha}$ and $\omega_\beta: \C^+ \rightarrow \C^+$ can be extended continuously to $\R$. The singular continuous part of $\mu_{\alpha} \boxplus \mu_{\beta}$ is always zero while the absolutely continuous part is always non-zero. The corresponding density, denoted by $\rho_{\mu_\alpha \boxplus \mu_\beta}$, is real analytic whenever positive and finite. Atoms in the free additive convolution measure are identified as follows~\cite{BeV93}. A point $c\in\R$ is an atom of $\mu_\alpha\boxplus\mu_\beta$, if and only if there exist $a,b\in\R$ such that $c=a+b$ and $\mu_\alpha(\{a\})+\mu_\beta(\{b\}) > 1$. In fact, it was shown in~\cite{belin_3} that the density of $\mu_\alpha\boxplus\mu_\beta$ is always bounded if $\mu_\alpha(\{a\})+\mu_\beta(\{b\})<1$, for all $a,b\in\R$.

Returning to the free sum of matrices, we first consider the linear eigenvalue statistics in~\eqref{le first clt} on the global scale. Gaussian fluctuations of the linear eigenvalue statistics for analytic test functions were derived in \cite{second order 3} within the framework of second order freeness (we refer also to the monograph~\cite{free_prob_book}) and in~\cite{random_book} using resolvent based methods.
\begin{theorem}[Theorem 10.2.6 \cite{random_book}]
Let $H_N$ be of the form (\ref{H_N}) and satisfy (\ref{assumption_1}) and (\ref{levy distance assumption}). Let $g \in C(\R)$  be analytic in a neighborhood of $[-2M,2M]$, where $M$ is the constant in~\eqref{assumption_1}. Then the linear eigenvalue statistics,
\begin{equation}\label{linear_stat}
\Tr g(H_N)-\E \Tr g(H_N),
\end{equation}
 converges in distribution to a centered Gaussian random variable of variance
\begin{equation}\label{vf_global}
-\frac{1}{4 \pi^2} \int_{\mathcal{C}_2} \int_{\mathcal{C}_1} g(z_1) g(z_2) S(z_1,z_2) \dd z_1 \dd z_2,
\end{equation}
where the kernel $S(z_1,z_2)$ is given by
$$S(z_1,z_2)=\pzab \log \Big( \frac{(\omega_\alpha(z_1)-\omega_\alpha(z_2)) (\omega_\beta(z_1)-\omega_\beta(z_2)) }{(z_1-z_2)({{F}_{\mu_\alpha\boxplus \mu_\beta}(z_1)-{F}_{\mu_\alpha\boxplus \mu_\beta}(z_2)})} \Big),$$
and where $\mathcal{C}_{1,2}$ are contours enclosing $[-2M,2M]$ and are lying in the domain of analyticity of $g$.
\end{theorem}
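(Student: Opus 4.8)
The plan is to deduce the statement from a functional central limit theorem for the resolvent $G(z):=(H_N-z)^{-1}$. Since $g$ is analytic in a neighbourhood of $[-2M,2M]$ and, by~\eqref{assumption_1}, all eigenvalues of $H_N$ lie in $[-2M,2M]$, the Cauchy integral formula gives
\begin{equation}
\Tr g(H_N)-\E\Tr g(H_N)=-\frac{1}{2\pi\ii}\oint_{\mathcal C} g(z)\big(\Tr G(z)-\E\Tr G(z)\big)\,\dd z\,,
\end{equation}
with $\mathcal C$ a contour around $[-2M,2M]$ inside the domain of analyticity of $g$. It therefore suffices to show that the centered process $z\mapsto \Tr G(z)-\E\Tr G(z)$, $z\in\mathcal C$, converges in the sense of finite-dimensional distributions to a centered Gaussian process with covariance $\Gamma(z_1,z_2)$ equal to $S(z_1,z_2)$, since then $(-\tfrac{1}{2\pi\ii})^2\oint\oint g(z_1)g(z_2)\Gamma(z_1,z_2)\,\dd z_1\,\dd z_2$ is exactly~\eqref{vf_global}. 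The $\log$-form of $S$ is anticipated by the identity $\Tr G(z)=\frac{\dd}{\dd z}\log\det(z-H_N)$: the covariance of the log-determinants should equal $-\log\big((\omega_\alpha(z_1)-\omega_\alpha(z_2))(\omega_\beta(z_1)-\omega_\beta(z_2))/((z_1-z_2)(F_{\mu_\alpha\boxplus\mu_\beta}(z_1)-F_{\mu_\alpha\boxplus\mu_\beta}(z_2)))\big)$ up to $z$-independent terms, and differentiating in $z_1,z_2$ produces $S$.

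To obtain the Gaussian limit I would run the characteristic-function argument used later in the paper (cf.\ Subsection~\ref{subsection outline of proof}), now carried out at the fixed scale $\eta\sim 1$ rather than on mesoscopic scales. Fix finitely many $z_1,\dots,z_p\in\mathcal C$ and real $t_1,\dots,t_p$, set $\mathcal Z:=\sum_k t_k\big(\Tr G(z_k)-\E\Tr G(z_k)\big)$, and let $e(\lambda):=\E\,\mathrm{e}^{\ii\lambda\mathcal Z}$. Then $e'(\lambda)=\ii\,\E[\mathcal Z\,\mathrm{e}^{\ii\lambda\mathcal Z}]$, and the goal is the differential equation $e'(\lambda)=-\lambda V e(\lambda)+o(1)$ with $V=\sum_{k,l}t_kt_l\,\Gamma(z_k,z_l)$, which forces $e(\lambda)\to\mathrm{e}^{-\lambda^2 V/2}$, the characteristic function of $N(0,V)$. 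The key to computing $\E[\Tr G(z)\,\mathrm{e}^{\ii\lambda\mathcal Z}]-\E\Tr G(z)\,e(\lambda)$ is the left-translation invariance of the Haar measure on $U(N)$: applying the infinitesimal generator of $U\mapsto\mathrm{e}^{\ii\epsilon\Theta}U$ to quantities of the form $\Tr(\Theta\,UBU^*G\cdots)$ and using that its expectation vanishes produces Ward identities relating this covariance to (i) $z_k$-derivatives of $e(\lambda)$, (ii) the deterministic approximation $\underline{G}(z)\approx m_{\mu_\alpha\boxplus\mu_\beta}(z)$ together with the subordination functions $\omega_\alpha,\omega_\beta$ of Theorem~\ref{subordination_function}, and (iii) two-point product functions $\E[\underline{G}(z_1)\underline{G}(z_2)]$ and their $A$- and $B$-weighted analogues. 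On the global scale the elementary global law for $\underline{G}$ and concentration with respect to the full Haar measure suffice to bound the error terms.

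The remaining and most delicate task is to set up and solve a closed linear system for the two-point product functions of resolvents. Combining the Ward identities for $N^{-1}\Tr(G(z_1)\cdots)$ paired with $G(z_2)$ with the derivative of the subordination relations~\eqref{omega_limit} yields a small (essentially $2\times2$, resp.\ $3\times3$) system whose coefficient matrix is the linearization of the subordination equations; its invertibility follows from the stability input recalled in Section~\ref{sec_preliminaries}, which is where the non-degeneracy hypothesis of Assumption~\ref{assumption_2} enters. Solving it expresses the leading part of $N^2\,\mathrm{Cov}(\underline{G}(z_1),\underline{G}(z_2))=\mathrm{Cov}(\Tr G(z_1),\Tr G(z_2))$ through $\omega_\alpha'(z_i),\omega_\beta'(z_i),F_{\mu_\alpha\boxplus\mu_\beta}'(z_i)$, and this rational expression recombines precisely into $\pzab\log(\cdots)=S(z_1,z_2)$; inserting $\Gamma=S$ into the double contour integral gives~\eqref{vf_global}. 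I expect the bulk of the work — just as in the mesoscopic case treated in the body of the paper — to lie in deriving this self-consistent system and controlling its error terms; on the global scale this step is lighter because full-Haar concentration replaces the partial-randomness decomposition needed on mesoscopic scales.
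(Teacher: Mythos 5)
This statement is quoted from Pastur and Shcherbina (Theorem 10.2.6 of \cite{random_book}) and the present paper does not prove it; it only cites it as the global-scale benchmark before developing the mesoscopic analogue. So there is no paper proof to compare against verbatim. That said, your outline is essentially the method used in \cite{random_book} and also the global-scale version of what Sections~\ref{sec_proof_main_theorem}--\ref{sec_proof_main_lemma} carry out on mesoscopic scales: reduce to resolvents via Cauchy's formula, run the characteristic-function ODE $\phi'(\lambda)=-\lambda V\phi(\lambda)+o(1)$, generate Ward identities from the left-translation invariance of Haar measure, feed in the subordination equations, and close a small self-consistent system for the two-point resolvent quantities. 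The route is correct and matches the spirit of the paper's own technique.

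Two small remarks. First, the heuristic identity should read $\Tr G(z)=-\frac{\dd}{\dd z}\log\det(z-H_N)$ (a sign), which of course does not affect the kernel $S=\pzab\log(\cdots)$. Second, proving convergence of the finite-dimensional distributions of $z\mapsto\Tr G(z)-\E\Tr G(z)$ along $\mathcal C$ does not by itself give convergence of the contour integral; one either adds a tightness/equicontinuity argument (easy here since $\mathcal C$ stays at distance $O(1)$ from the spectrum, so $\|G(z)\|_{\mathrm{op}}=O(1)$ uniformly) or, as in \cite{random_book} and as this paper does for the mesoscopic observable, works directly with the characteristic function of the integrated quantity $\Tr g(H_N)-\E\Tr g(H_N)$. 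Either fix is routine. Otherwise the outline is sound, and on the global scale the full-Haar (Gromov--Milman) concentration you invoke is indeed sufficient in place of the partial-randomness decomposition that the paper needs for small $\eta_0$.
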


In the present paper, we prove Gaussian fluctuations for the linear eigenvalue statistics (\ref{observable}) on the mesoscopic scales and establish a universal mesoscopic CLT inside the {\it regular bulk}: The regular bulk of $\mu_{\alpha}\boxplus \mu_{\beta}$ is the open set on which $\mu_\alpha \boxplus \mu_\beta$ has a continuous density that is strictly positive and bounded from above, i.e.,
\begin{equation}\label{omega}
\mathcal{B}_{\mu_\alpha \boxplus \mu_\beta}:=\Big\{ x \in \mbox{supp}(\rho_{\mu_\alpha \boxplus \mu_\beta})\,:\, \rho_{\mu_\alpha \boxplus \mu_\beta} (x)> 0\,; \quad \lim_{\eta \searrow 0} F_{\mu_\alpha \boxplus \mu_\beta}(x+\ii \eta) \neq 0  \Big\}.
\end{equation}
By the remarks after Theorem~\ref{subordination_function}, the regular bulk is always non-empty under Assumption~\ref{assumption_2}. 

The convergence rate in~\eqref{levy distance assumption} of Assumption \ref{assumption_2} may be very slow. Yet, by working with the finite-N deterministic measures $\mu_A \boxplus \mu_B$ instead of $\mu_\alpha \boxplus \mu_\beta$ we avoid issues related to this. Theorem \ref{subordination_function} ensures that there exist unique analytic functions, $\omega_{A}, ~\omega_B: \C^+ \rightarrow \C^+$ such that 
\begin{equation}\label{omega_finite}
F_{\mu_A}(\omega_B(z))=F_{\mu_B}(\omega_A(z)); \qquad \omega_{A}(z)+\omega_{B}(z)-z=F_{\mu_A}(\omega_B(z)),
\end{equation}
and 
\begin{equation}\label{F_definition}
F_{\mu_A\boxplus\mu_B}(z):=F_{\mu_A}(\omega_B(z))=F_{\mu_B}(\omega_A(z))
\end{equation} 
is the negative reciprocal Stieltjes transform of the free additive convolution of $\mu_A$ and $\mu_B$. 

Besides the unitary conjugation in (\ref{H_N}), we also consider orthogonal conjugations, i.e., the matrix
\begin{equation}\label{beta}
H=A+O B O^T\,, 
\end{equation}
where $O \equiv O_N$ is Haar distributed on the orthogonal group $O(N)$ and obtain the corresponding results. We will use the conventional symmetry parameter $\bm{\beta}$ as indicator for the symmetry class; $\bm\beta=2$ for unitary and $\bm\beta=1$ for orthogonal conjugations.

\subsection{Main results}
Choose a nonempty compact interval $\mathcal I$ within the regular bulk $\mathcal{B}_{\mu_\alpha \boxplus \mu_\beta}$ (see (\ref{omega})) and fix $E_0 \in \mathcal{I}$. Choose an $N$-dependent $\eta_0$ such that $ N^{-1} \ll \eta_0 \ll 1$. We then consider a mesoscopic test function 
\begin{equation}\label{fn}
f(x) \equiv f_{N}(x):=g\Big( \frac{x-E_0}{\eta_0}\Big), \qquad g \in C_c^2(\R), \qquad x \in \R.
\end{equation}
Following \cite{character,lytova+pastur,random_book}, we study the characteristic function 
\begin{equation}\label{mme}
\phi(\lambda):=\E[e(\lambda)], \quad \mbox{where}~e(\lambda):=\exp \Big\{ \i \lambda(\Tr f(H_N)-\E \Tr f(H_N)) \Big\}, \qquad \lambda \in \R.
\end{equation}
We have the following result for the characteristic function $\phi$.
\begin{proposition}\label{prop}
	Let $H_N$ be of the form (\ref{H_N}), satisfying (\ref{assumption_1}) and Assumption \ref{assumption_2}. Let $N^{-1+c_0}\leq \eta_0\leq N^{-c_0}$, for some small $c_0>0$. Assume in addition that there is a small $c>0$, such that $ |m'_{\mu_\alpha \boxplus \mu_\beta}(E_0+ \ii 0)|>c$. Then there exists $0 < \tau < c_0/6$, such that the characteristic function $\phi$ satisfies
		\begin{align}\label{le phi prime}\phi'(\lambda)=-\lambda \phi(\lambda) V(f)+\tilde{\mathcal{E}}, 
		\end{align}
		where
	\begin{align}\label{vf}
	 V(f)=-\frac{1}{2 \bm\beta \pi^2} \int_{\Gamma_{1}} \int_{\Gamma_{2}}  \tf(z_1)  \tf(z_2) \mathcal K(z_1,z_2) \dd z_1 \dd z_2,
	\end{align}
	$\tilde{\mathcal{E}}$ is an error term, and $\bm\beta=1,2$ is the symmetry parameter. The integral kernel $\mathcal K$ in (\ref{vf}) is given by
	\begin{align}\label{kernel}
	\mathcal K(z_1,z_2)= \pzab \log \bigg( \frac{(\omega_A(z_1)-\omega_A(z_2)) (\omega_B(z_1)-\omega_B(z_2)) }{(z_1-z_2)({F_{\mu_A\boxplus\mu_B}(z_1)-F_{\mu_A\boxplus\mu_B}(z_2)})} \bigg);
	\end{align}
         the function $\tf$ is an almost analytic extension of $f$ given in Lemma \ref{helffler} below; the contours $\Gamma_1$, $\Gamma_2$ are $\Gamma_1=\{ z_1 \in \C\,:\, |\Im z_1| = N^{-\tau} \eta_0 \}$ and $\Gamma_2=\{ z_2 \in \C\,:\, |\Im z_2| = \frac{1}{2}N^{-\tau} \eta_0 \}$ with counterclockwise orientation.
	
	The error term $\tilde{\mathcal{E}}$ in~\eqref{le phi prime} is bounded as
	\begin{align}
	|\tilde{\mathcal{E}}|=O_{\prec}\bigg( |\lambda| (\log N) N^{- \tau} \bigg)+O_{\prec}\bigg( \frac{(1+|\lambda|)N^{3 \tau}}{\sqrt{N \eta_0 }}\bigg)\,, \label{20011501}
	\end{align}
	provided that $ V(f) \prec 1$. 
	\end{proposition}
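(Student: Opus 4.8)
\textbf{Proof plan for Proposition \ref{prop}.}
The strategy is to differentiate the characteristic function $\phi(\lambda)=\E[e(\lambda)]$ and use the invariance of the Haar measure to turn the resulting expression into something that can be controlled by local laws. Starting from $\phi'(\lambda)=\i\,\E[(\Tr f(H_N)-\E\Tr f(H_N))\,e(\lambda)]$, the first step is to pass from the test function $f$ to resolvents: using the Helffer--Sj\"ostrand representation (Lemma \ref{helffler}) we write $\Tr f(H_N)=-\frac{1}{\pi}\int_{\C}\partial_{\bar z}\tf(z)\,\Tr G(z)\,\dd^2 z$, so that $\phi'(\lambda)$ becomes a $2$-dimensional integral of $\E[\langle\Tr G(z)\rangle\,e(\lambda)]$ against $\partial_{\bar z}\tf$. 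The bulk of the work is then to obtain a self-consistent identity for this quantity.

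The key mechanism is the left-translation-invariance of the Haar measure: writing $U=\mathrm{e}^{\i\epsilon\Theta}U$ for a fixed Hermitian $\Theta$, differentiating at $\epsilon=0$, and averaging over a basis of $\Theta$'s, produces Ward-type identities relating $\E[\langle\Tr G(z)\rangle e(\lambda)]$ to $\E[(\partial_\lambda\text{-type terms})\,e(\lambda)]$ — that is, the differentiation hits $e(\lambda)$ and generates the factor $\i\lambda\,\Tr f'(H_N)$-type contributions, which after another Helffer--Sj\"ostrand step become two-resolvent objects $\Tr(G(z_1)XG(z_2)Y)$. One then substitutes the analytic subordination relations for $H_N$: the local laws of \cite{eta2} and the stability estimates of \cite{stability} allow one to replace $\ud{G(z)}$, and the partial traces $\ud{AG(z)}$, $\ud{BG(z)}$, by the deterministic subordination quantities $m_{\mu_A\boxplus\mu_B}(z)$, $\omega_A(z)$, $\omega_B(z)$, up to errors of size $O_\prec((N\eta)^{-1})$. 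The requirement $|m'_{\mu_\alpha\boxplus\mu_\beta}(E_0+\i0)|>c$ enters here, ensuring the subordination equations are non-degenerate at $E_0$ so that the linearized system governing the two-point functions is invertible with bounded inverse.

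The next step is the structural heart: one must show that the leading contribution to $\E[\langle\Tr G(z_1)\rangle\langle\Tr G(z_2)\rangle]$ (equivalently, the coefficient multiplying $\lambda\phi(\lambda)$) is exactly the kernel $\mathcal K(z_1,z_2)=\partial_{z_1}\partial_{z_2}\log\!\big[(\omega_A(z_1)-\omega_A(z_2))(\omega_B(z_1)-\omega_B(z_2))/((z_1-z_2)(F_{\mu_A\boxplus\mu_B}(z_1)-F_{\mu_A\boxplus\mu_B}(z_2)))\big]$. This is obtained by deriving a closed system of self-consistent equations for the relevant two-point product functions of resolvents — and this is where the partial randomness decomposition of the Haar measure from \cite{eta1} is used rather than naive concentration with respect to the full Haar measure: it is what lets one capture fluctuations on \emph{all} mesoscopic scales $\eta_0\gg N^{-1}$. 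Solving this linear system and matching against the derivative of the subordination equations \eqref{omega_finite} identifies the kernel; the contour integral representation \eqref{vf} then follows from the Helffer--Sj\"ostrand formula, with the contours $\Gamma_1,\Gamma_2$ placed at heights $N^{-\tau}\eta_0$ and $\tfrac12 N^{-\tau}\eta_0$ — a small factor $N^{-\tau}$ below the scale $\eta_0$ so that the local law is still applicable while the almost-analytic extension $\tf$ remains negligible off these contours.

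Finally, one collects the error terms. The bound $|\tilde{\mathcal E}|=O_\prec(|\lambda|(\log N)N^{-\tau})+O_\prec((1+|\lambda|)N^{3\tau}(N\eta_0)^{-1/2})$ comes from three sources: the $(\log N)$ loss from the Helffer--Sj\"ostrand cutoff and the $N^{-\tau}$ gap between the contours and $\eta_0$; the $N^{3\tau}/\sqrt{N\eta_0}$ from the local-law errors on the two-resolvent functions evaluated at spectral parameter of imaginary part $\sim N^{-\tau}\eta_0$ (each resolvent contributing up to $N^{\tau}$-type losses, plus the genuine $1/\sqrt{N\eta}$ fluctuation size); and the factor $|\lambda|$ tracking how many times $e(\lambda)$ has been differentiated. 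The assumption $V(f)\prec 1$ is used to close a Gr\"onwall-type bound when iterating. The main obstacle is unquestionably the second step: deriving the \emph{correct} self-consistent system for the two-point functions $\Tr(G(z_1)XG(z_2)Y)$ with errors small enough on mesoscopic scales, since the Haar measure lacks the independence structure of Wigner matrices and one must extract the cancellations by hand via the partial randomness decomposition — this is precisely the content deferred to Section \ref{sec_proof_main_lemma}.
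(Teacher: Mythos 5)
Your proposal follows essentially the same route as the paper: differentiate $\phi$, pass to resolvents via Helffer--Sj\"ostrand, truncate to $\Omega_1,\Omega_2$ with the regularized $e_0(\lambda)$, invoke left-invariance of Haar measure to derive a Ward identity for $\E[e_0(\lambda)\langle\Tr G(z_1)\rangle]$ (Lemma~\ref{lemma_useful}), reduce the resulting two-point kernels $K_{A,\sharp}$, $K_{B,\sharp}$ to the subordination quantities via the local laws and the partial-randomness decomposition (Proposition~\ref{variance_bfbg}), and finish with Stokes' formula to land on the contours $\Gamma_1,\Gamma_2$. Two small corrections to keep in mind: the condition $|m'_{\mu_\alpha\boxplus\mu_\beta}(E_0+\ii 0)|>c$ is not needed for invertibility of the main linearized system (that is already given by the non-vanishing of $\Delta(z)$ in \eqref{delta_bound}) but specifically to guarantee $L_B(z_1,z_2)\neq0$ when $z_1,z_2$ lie in the same half-plane in the proof of Proposition~\ref{variance_bfbg}; and the hypothesis $V(f)\prec1$ is used only in the final step to replace $e_0(\lambda)$ by $e(\lambda)$ via \eqref{e2}, not as part of a Gr\"onwall iteration, which appears only later when integrating $\phi'$ for Theorem~\ref{meso}.
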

	
	The condition~$|m'_{\mu_\alpha \boxplus \mu_\beta}(E_0+ \ii 0)|>0$ helps us to control in Propositions~\ref{variance_bfbg} and~\ref{gbg} some error terms effectively. It ensures that $m_{\mu_\alpha\boxplus\mu_\beta}$ is locally injective in a neighborhood of $E_0$, yet it may not be a necessary condition for the results to hold. The condition is satisfied for familiar distributions of random matrix theory such as Wigner's semicircle law or the Marchenko-Pastur law.

The expectation of $\Tr f(H_N)$ has the following asymptotic expansion for the so-called bias.

\begin{proposition}\label{prop2}
Under the same assumptions and notations as in Proposition~\ref{prop}, the bias is given by
	\begin{equation}\label{bf}
	\E \Tr f(H_N)-N \int_{\R} f(x) \dd \mu_A\boxplus\mu_B(x)= \frac{1}{2 \pi \ii} \int_{\Gamma_1} \tf(z) b(z) \dd z+O(N^{-\tau})+O_{\prec}\bigg( \frac{N^{2\tau}}{\sqrt{N \eta_0}}\bigg)\,,
	\end{equation}
	with 
	\begin{equation}\label{bz}
	b(z):=\frac{1}{2} \bigg( \frac{2}{\bm\beta} -1\bigg) \frac{\dd}{\dd z} \log \bigg( \frac{\omega'_A(z) \omega'_B(z)}{F'_{\mu_A\boxplus\mu_B}(z)}  \bigg)\,,
	\end{equation}
	where $\bm\beta=1,2$ denotes the symmetry parameter.
\end{proposition}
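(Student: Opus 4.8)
The plan is to compute the bias by analyzing the expectation of $\Tr f(H_N)$ through the Helffer--Sj\"ostrand formula, which reduces the problem to understanding $\E\,\ud{G(z)}$ for $z$ on the contour $\Gamma_1$, where $G(z)=(H_N-z)^{-1}$ is the resolvent. By Lemma~\ref{helffler}, we have
\begin{equation*}
\Tr f(H_N)=\frac{1}{2\pi\ii}\int_{\C}\frac{\partial\tf}{\partial\bar z}(z)\,\Tr G(z)\,\dd z\dd\bar z\,,
\end{equation*}
and similarly $N\int f\dd\mu_A\boxplus\mu_B = \frac{1}{2\pi\ii}\int_\C\frac{\partial\tf}{\partial\bar z}(z)\,N m_{\mu_A\boxplus\mu_B}(z)\,\dd z\dd\bar z$. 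Thus the bias is controlled by $N(\E\,\ud{G(z)}-m_{\mu_A\boxplus\mu_B}(z))$, and the first step is to produce a refined local law with a deterministic next-order correction: I expect an expansion of the form $\E\,\ud{G(z)}=m_{\mu_A\boxplus\mu_B}(z)+\frac{1}{N}\Theta(z)+(\text{smaller})$, where $\Theta(z)$ is an explicit analytic function built from $\omega_A,\omega_B$ and their derivatives. The main work is identifying $\Theta$; one then checks $\Theta(z)=\frac{\dd}{\dd z}\, b(z)\cdot(\text{appropriate normalization})$ so that, after integrating by parts in the Helffer--Sj\"ostrand integral (moving $\partial_{\bar z}$ off $\tf$), one recovers $\frac{1}{2\pi\ii}\int_{\Gamma_1}\tf(z)b(z)\dd z$, with the error terms in~\eqref{bf} coming from the subleading pieces of the local law expansion and from the near-real part of the contour.

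The heart of the matter is deriving the next-to-leading term $\Theta(z)$, and this is where the $\bm\beta$-dependent prefactor $\frac{1}{2}(\frac{2}{\bm\beta}-1)$ must emerge. I would use the same machinery developed for Proposition~\ref{prop}: Ward identities from left-translation invariance of the Haar measure applied to $\E[(\partial_t U)\cdots]$ generators, together with the partial randomness decomposition of the Haar measure (writing $U$ or $O$ via a reflection/Householder step as in~\cite{eta1}). Concretely, one writes down the self-consistent equation satisfied by $\E\,\ud{G}$ including the $O(1/N)$ fluctuation corrections; the subordination structure~\eqref{omega_finite} gives the leading cancellation, and the residual $O(1/N)$ terms assemble — after using analytic subordination identities such as $\omega_A'(z)\omega_B'(z)=\omega_A'(z)+\omega_B'(z)-1$ (differentiating the second relation in~\eqref{omega_finite}) and the relations among $F'_{\mu_A\boxplus\mu_B}$, $\omega_A'$, $\omega_B'$ — into a total derivative $\frac{\dd}{\dd z}\log(\omega_A'\omega_B'/F'_{\mu_A\boxplus\mu_B})$. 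The factor $\frac{2}{\bm\beta}$ enters because in the orthogonal case ($\bm\beta=1$) the Householder reflection carries a real Gaussian vector whose fourth-moment/self-interaction contributes an extra term of exactly twice the unitary size, shifting the coefficient from $0$ (when $\bm\beta=2$) to $\frac{1}{2}$ (when $\bm\beta=1$); this is consistent with the fact that for unitary conjugation the bias at this order should vanish.

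For the error bookkeeping: on $\Gamma_1$ we have $|\Im z|=N^{-\tau}\eta_0$, so the local law of~\cite{eta2} gives $\ud{G(z)}-m_{\mu_A\boxplus\mu_B}(z)=O_\prec(1/\sqrt{N|\Im z|})=O_\prec(N^{\tau/2}/\sqrt{N\eta_0})$, and fluctuation-averaging improves the averaged quantity by a further factor; after multiplying by $N$ and integrating against $\partial_{\bar z}\tf$ (which is supported near $\Gamma_1$ and has size $O(|\Im z|)$ by almost-analyticity, gaining back a factor $\eta_0$ from the integration), the leftover is $O_\prec(N^{2\tau}/\sqrt{N\eta_0})$; the purely deterministic replacement of $\mu_A\boxplus\mu_B$-subordination functions by their boundary values near $\mathcal I$ together with the regularity of the bulk contributes the $O(N^{-\tau})$ term. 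I expect the main obstacle to be the clean algebraic identification of $\Theta$ with the total derivative $b'$ — tracking all the $O(1/N)$ contributions from the partial randomness expansion (including terms that naively look non-analytic or non-total-derivative) and showing they collapse via the subordination identities — rather than the analytic estimates, which are largely parallel to those already established for Proposition~\ref{prop} and in~\cite{eta1,eta2,stability}.
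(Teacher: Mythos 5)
Your high-level strategy matches the paper's: reduce the bias to $\E\Tr G(z)-Nm_{fc}(z)$ via Helffer--Sj\"ostrand, extract an $O(1/N)$ correction using Ward identities together with the local laws, and recover $\tfrac{1}{2\pi\ii}\int_{\Gamma_1}\tf\,b\,\dd z$ by Stokes' formula. But several of the specific mechanisms you describe are not the ones that actually drive the result, and one of your stated identities is false.

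First, the mechanism producing the $\bm\beta$-dependence is misattributed. You claim the prefactor $\tfrac12(\tfrac{2}{\bm\beta}-1)$ comes from a ``fourth-moment/self-interaction'' of the real Gaussian vector in the Householder reflection. In fact it comes from the structure of the Ward identity itself. For $\bm\beta=2$, the identity (translation-invariance under $U_t=\mathrm{e}^{\ii tX}U$, with $X$ Hermitian) gives
$\E[\underline{\tb G}\,G_{ii}]=\E[\underline{G}\,(\tb G)_{ii}]$
with no extra terms, and combined with the local laws and the subordination equations this already yields $\E\Tr G-Nm_{fc}=O_\prec(1/\sqrt{N\eta^3})$, hence a vanishing leading bias; no partial randomness decomposition is needed for this part of the proposition. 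For $\bm\beta=1$, the generator $X$ is antisymmetric and $G=G^T$, and averaging the resulting identity over the index produces two extra $\tfrac{1}{N}$-suppressed terms,
$-\tfrac{1}{N}(G\tb G)_{jj}+\tfrac{1}{N}(G^2\tb)_{jj}$,
which are precisely what generates the bias. The Householder/partial-randomness machinery then enters only as a \emph{tool} to compute the diagonal entry $(G\tb G)_{jj}$ (the analogue of Proposition~\ref{variance_bfbg}, stated as Proposition~\ref{gbg}), not as the source of the $\bm\beta$-dependent coefficient.

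Second, the identity $\omega_A'(z)\omega_B'(z)=\omega_A'(z)+\omega_B'(z)-1$ you cite is not a subordination identity. Differentiating the second relation in~\eqref{omega_finite} gives $\omega_A'(z)+\omega_B'(z)-1=F'_{fc}(z)=m'_{fc}(z)/m_{fc}(z)^2$ (this is~\eqref{differential_relation}), which is generally different from $\omega_A'\omega_B'$. The algebra that actually collapses the $O(1/N)$ contributions into a total derivative uses~\eqref{differential_relation} together with $L_B(z)=m'_{fc}/\omega'_B$, $L_A(z)=m'_{fc}/\omega'_A$ from~\eqref{LMprime}, and the relation $\frac{1}{N}\sum_j(a_j-\omega_B)^{-3}=L_B'/(2\omega_B')$. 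Finally, the $O(N^{-\tau})$ error in~\eqref{bf} does not come from replacing subordination functions by boundary values; it comes from discarding the sub-mesoscopic part of the Helffer--Sj\"ostrand integral (restricting $\C$ to $\Omega_1$), exactly as in~\eqref{fw2}. These are all fixable, but as written the proposal does not correctly explain where $b(z)$ comes from.
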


Proposition \ref{prop} and \ref{prop2} imply the following universal mesoscopic CLT in the regular bulk.

\begin{theorem}[Universal mesoscopic CLT in the regular bulk]\label{meso}
	Under the same assumptions as in Proposition~\ref{prop}, for any test function $g \in C^2_c(\R)$, the mesoscopic linear statistics 
	\begin{equation}\label{linear_stat}
	\sum_{i=1}^N g \bigg( \frac{\lambda_i-E_0}{\eta_0} \bigg)-N \int_{\R} g \bigg(\frac{x-E_0}{\eta_0} \bigg) \dd \mu_A\boxplus\mu_B(x)\,,
	\end{equation}
	converges in distribution to a centered Gaussian random variable of variance
	\begin{align}\label{bulk_variance}
	\frac{1}{2\bm\beta \pi^2} \int_{\R}  \int_{\R}  \frac{(g(x_1)-g(x_2))^2}{(x_1-x_2)^2} \dd x_1 \dd x_2=\frac{1}{\bm\beta \pi} \int_{\R} |\xi| |\widehat{g}(\xi)|^2 \dd \xi,
	\end{align}
	where $\widehat{g}(\xi):=(2 \pi)^{-1/2} \int_{\R} g(x) \mathrm{e}^{-\i \xi x} \dd x$, and $\bm\beta=1,2$ is the symmetry parameter. In particular, the bias vanishes inside the regular bulk on mesoscopic scales.
\end{theorem}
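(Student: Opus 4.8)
The plan is to derive Theorem~\ref{meso} from Propositions~\ref{prop} and~\ref{prop2} by a standard characteristic function argument, together with two computations: (i) reducing the double-contour variance $V(f)$ in~\eqref{vf} to the explicit Sobolev-type expression in~\eqref{bulk_variance}; (ii) checking that the bias term in~\eqref{bf} vanishes in the mesoscopic limit. First I would integrate the ODE~\eqref{le phi prime}. Writing $\phi(0)=1$ and $\phi'(\lambda)=-\lambda V(f)\phi(\lambda)+\tilde{\mathcal E}$, Gr\"onwall/variation of parameters gives $\phi(\lambda)=\mathrm e^{-\lambda^2 V(f)/2}+\int_0^\lambda \mathrm e^{-(\lambda^2-s^2)V(f)/2}\tilde{\mathcal E}(s)\,\dd s$. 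Using the error bound~\eqref{20011501}, for each fixed $\lambda$ the integral term is $O_\prec((\log N)N^{-\tau})+O_\prec(N^{3\tau}/\sqrt{N\eta_0})$, which tends to $0$ since $0<\tau<c_0/6$ and $N\eta_0\geq N^{c_0}$; hence $\phi(\lambda)\to \mathrm e^{-\lambda^2 V_\infty/2}$ in probability, where $V_\infty$ is the limit of $V(f)$. Since convergence of characteristic functions to that of a Gaussian implies convergence in distribution, this yields the CLT with variance $V_\infty$, provided $V(f)\prec 1$ (which must be verified a posteriori once $V_\infty$ is identified as a finite constant).

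Next I would identify $V_\infty$. The kernel $\mathcal K(z_1,z_2)=\pzab\log\big[(\omega_A(z_1)-\omega_A(z_2))(\omega_B(z_1)-\omega_B(z_2))/((z_1-z_2)(F_{\mu_A\boxplus\mu_B}(z_1)-F_{\mu_A\boxplus\mu_B}(z_2)))\big]$ has the product structure of the global second-order-freeness kernel $S(z_1,z_2)$, but it is integrated against the \emph{almost-analytic extension} $\tf$ over the microscopic contours $\Gamma_1,\Gamma_2$ at height $N^{-\tau}\eta_0$. The point is that on the scale $|z_j-E_0|\sim\eta_0$ the subordination functions $\omega_A,\omega_B$ and $F_{\mu_A\boxplus\mu_B}$ are, to leading order, affine in their arguments: since $E_0$ lies in the regular bulk and (by the extra hypothesis) $|m'_{\mu_\alpha\boxplus\mu_\beta}(E_0+\ii0)|>c$, we have $\omega_A(z)=\omega_A(E_0)+\omega_A'(E_0)(z-E_0)+O((z-E_0)^2)$ etc. Substituting these linearizations, all the prefactors $\omega_A'(E_0)$, $\omega_B'(E_0)$, $F'(E_0)$ cancel inside the logarithm after differentiation, leaving $\mathcal K(z_1,z_2)=\pzab\log\big[(w_1-w_2)^2/((z_1-z_2)^2\cdot c)\big]+\text{lower order}$ — i.e. effectively the \emph{free (semicircle-independent) kernel} $-1/(z_1-z_2)^2$ after accounting for $w_j=(z_j-E_0)/\eta_0$. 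Rescaling $z_j\mapsto E_0+\eta_0 w_j$ so that $\tf(z_j)\approx \tilde g(w_j)$, the contour integral becomes $-\frac{1}{2\bm\beta\pi^2}\oint\oint \tilde g(w_1)\tilde g(w_2)\,\partial_{w_1}\partial_{w_2}\log(w_1-w_2)^2\,\dd w_1\dd w_2$; deforming the contours to the real axis and using the standard Helffer-Sj\"ostrand / Parseval identity one obtains exactly $\frac{1}{2\bm\beta\pi^2}\iint (g(x_1)-g(x_2))^2/(x_1-x_2)^2\,\dd x_1\dd x_2 = \frac{1}{\bm\beta\pi}\int|\xi||\widehat g(\xi)|^2\dd\xi$, which is finite for $g\in C_c^2$, confirming $V(f)\prec1$.

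Finally, the bias: in~\eqref{bf} the leading correction is $\frac{1}{2\pi\ii}\int_{\Gamma_1}\tf(z)b(z)\,\dd z$ with $b(z)=\frac12(\frac2{\bm\beta}-1)\frac{\dd}{\dd z}\log(\omega_A'(z)\omega_B'(z)/F_{\mu_A\boxplus\mu_B}'(z))$. On the contour $\Gamma_1$ at distance $\sim\eta_0$ from $E_0$ the integrand $b(z)$ is analytic and bounded (again by the regular-bulk assumption and $|m'|>c$), so after the rescaling $z=E_0+\eta_0 w$ one can collapse $\Gamma_1$ onto the real line; since $b$ is \emph{regular} at $E_0$, the contour integral of the analytic function $\tf\cdot b$ around a contour that can be contracted to a point vanishes in the limit (the non-vanishing contributions would come only from singularities of $b$, which are absent in the regular bulk). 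More quantitatively, $\int_{\Gamma_1}\tf(z)b(z)\dd z = O(\eta_0 \|g\|\cdot\sup_{\Gamma_1}|b|)\to 0$. Combined with the explicit error terms $O(N^{-\tau})+O_\prec(N^{2\tau}/\sqrt{N\eta_0})$ in~\eqref{bf}, this shows the recentering in~\eqref{linear_stat} by $N\int g((x-E_0)/\eta_0)\dd\mu_A\boxplus\mu_B(x)$ agrees with $\sum_i\E g((\lambda_i-E_0)/\eta_0)$ up to $o(1)$, so the bias vanishes and~\eqref{linear_stat} has the stated Gaussian limit.

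The main obstacle I anticipate is step (ii)—the precise linearization of the subordination functions on the scale $\eta_0$ and tracking that the $O((z-E_0)^2)$ remainders, once differentiated twice and integrated over contours of length $\sim\eta_0$ against $\tf$ (whose almost-analytic extension is only controlled up to $|\Im z|^1$ errors via Lemma~\ref{helffler}), genuinely contribute $o(1)$ rather than $O(1)$. One must use that $\omega_A,\omega_B,F_{\mu_A\boxplus\mu_B}$ extend analytically across the real axis near $E_0$ with bounded derivatives there (Belinschi's regularity, plus $|m'|>c$ to exclude degeneracy), and carefully balance powers of $N^{\tau}$; this is essentially where the restriction $\tau<c_0/6$ is consumed. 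The characteristic-function ODE integration and the final Parseval computation are routine by comparison.
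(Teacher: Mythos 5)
Your overall plan — integrate the ODE \eqref{le phi prime}, identify the limit of $V(f)$, and verify that the bias vanishes — matches the paper's proof, and the ODE/L\'evy-continuity step and the bias estimate (length of $\Gamma_1\sim\eta_0$ times a bounded $b(z)$, hence $O(\eta_0)$) are both correct. However, there is a genuine gap in the identification of $V(f)$.

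Your linearization argument, taken at face value, proves the opposite of what you need. You write $\omega_A(z)=\omega_A(E_0)+\omega_A'(E_0)(z-E_0)+O((z-E_0)^2)$ and substitute this (and the analogous expansions for $\omega_B$ and $F_{\mu_A\boxplus\mu_B}$) into the logarithm. If this expansion held uniformly for $z_1,z_2\in\Gamma_1\times\Gamma_2$, then each of $\omega_A(z_1)-\omega_A(z_2)$, $\omega_B(z_1)-\omega_B(z_2)$, $F_{fc}(z_1)-F_{fc}(z_2)$ would be $\approx\mathrm{const}\cdot(z_1-z_2)$, the argument of the logarithm would be asymptotically a nonzero constant, and hence $\mathcal K=\pzab\log(\cdot)$ would be $o(1)$, giving $V(f)\to 0$. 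Your rewriting $\mathcal K\approx\pzab\log\big[(w_1-w_2)^2/((z_1-z_2)^2 c)\big]$ with $w_j=(z_j-E_0)/\eta_0$ confirms this: since $(w_1-w_2)^2/(z_1-z_2)^2=\eta_0^{-2}$, that expression is literally zero, not $-1/(z_1-z_2)^2$.

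The missing ingredient is that the single-sided affine expansion is only valid when $z_1$ and $z_2$ are on the \emph{same} side of the real axis. The subordination functions do not extend analytically across the real axis near $E_0$; rather, inside the regular bulk they have a jump $\omega_A(E_0+\ii0)-\omega_A(E_0-\ii0)=2\ii\,\Im\omega_A(E_0+\ii0)\neq0$ (and likewise for $\omega_B$, $m_{fc}$), precisely because $\Im\omega_A,\Im\omega_B,\Im m_{fc}\geq c>0$ there, see \eqref{imaginary_bound}. This forces two cases, as in the paper's proof. When $z_1,z_2$ lie in the same half-plane, your affine cancellation does occur and one obtains $\mathcal K(z_1,z_2)=O(1/|z_1-z_2|)=O(N^\tau/\eta_0)$, which integrated against $\tf(z_1)\tf(z_2)$ over contours of length $O(\eta_0)$ contributes only $O(\eta_0 N^\tau)=o(1)$. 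When $z_1,z_2$ lie in opposite half-planes, $|\omega_A(z_1)-\omega_A(z_2)|$, $|\omega_B(z_1)-\omega_B(z_2)|$, $|m_{fc}(z_1)-m_{fc}(z_2)|$ are all bounded below by a positive constant, while $|z_1-z_2|$ can be as small as $\sim N^{-\tau}\eta_0$; hence only the term $-\pzab\log(z_1-z_2)=-1/(z_1-z_2)^2$ in $\mathcal K$ is singular and $\mathcal K(z_1,z_2)=-1/(z_1-z_2)^2+O(1)$. It is \emph{this} case that produces the Sobolev $\dot H^{1/2}$ variance via the final Parseval computation, not a leftover from a global cancellation. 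In short: your write-up conflates the region where the kernel cancels with the region where it contributes, and the boundary-value jump $2\ii\Im\omega_A(E_0+\ii0)$ — suppressed by the ambiguous notation ``$\omega_A(E_0)$'' — is exactly where the non-trivial variance lives.

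Once this distinction is made the rest of your argument (Gr\"onwall integration of the ODE with the error bound \eqref{20011501}, the Parseval step giving $\frac{1}{\bm\beta\pi}\int|\xi||\widehat g(\xi)|^2\dd\xi$, and the $O(\eta_0)$ bias estimate) is sound and matches the paper's proof.
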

	
	\begin{remark}
	Propositions \ref{prop} and \ref{prop2} can be extended using the Gromov-Milman concentration estimate to the regular spectral edges, where the density of the free convolution measure shows a square root behavior, under the restriction $ {N^{-2/5}}\ll \eta_0 \ll 1$. As a consequence, Theorem~\ref{meso} holds true on these scales but the limiting Gaussian law becomes
	${N}_\R\Big(\Big(\frac{2}{\bm\beta}-1\Big)\frac{g(0)}{4}, \frac{1}{\bm\beta \pi} \int_{\R} |\xi| |\hat{h}(\xi)|^2 \dd \xi \Big)$, where $h(x)=g(\mp x^2)$. Variance and bias agree with the expressions found for the Gaussian unitary and orthogonal ensembles~\cite{basor+widom,Min+Chen}; see also~\cite{huang,Li+Schnelli+Xu}. However, the mesoscopic scale at the regular edges ranges over ${N^{-2/3}}\ll \eta_0\ll 1$. The extension of these results to the full mesoscopic range remains an open problem.
	\end{remark}

\subsection{Outline of proof}\label{subsection outline of proof}

In this subsection we give an outline of the proof which is essentially split into two parts carried out in Sections~\ref{sec_proof_main_theorem} and~\ref{sec_proof_main_lemma}. 
Let
\begin{align}\label{H_tilde_definition}
H:=A+U B U^*; \qquad \mathcal{H}:= U^* H U= U^* A U+B 
\end{align}
and denote their resolvents or Green functions by
\begin{align}\label{tilde quantities}
G(z):=(H-zI)^{-1}, \qquad \tg(z):=(\mathcal{H}-zI)^{-1}, \qquad z \in \C^+.
\end{align}
Note that the Stieltjes transform of the empirical spectral measure $\mu_{N}$ of $H$ is given by
\begin{align}\label{m_N_definition}
m_N(z) \equiv m_{\mu_N}(z)=\frac{1}{N}\sum_{i=1}^N \frac{1}{\lambda_i-z}=\frac{1}{N} \Tr G(z)=\frac{1}{N} \Tr \mathcal G(z).
\end{align}
To simplify the notation, we let
$$\tb:=U B U^*; \qquad \ta:=U^* A U.$$

In the first part, we study the characteristic function $\phi(\lambda)$ of the linear statistics $\Tr f(H_N)-\E \Tr f(H_N)$, see~\eqref{mme}. Using the Helffer-Sj\"ostrand formula, Lemma~\ref{helffler}, we link the derivative of $\phi(\lambda)$ to the resolvent of $H_N$ as follows
\begin{equation}\label{phimini}
\phi'(\lambda)=\i \E\Big[ e(\lambda) (\Tr f(H_N)-\E \Tr f(H_N)) \Big]=\frac{\i}{\pi}\int_{\C} \frac{\partial}{\partial  \overline{z_1}} \tf(z_1)   \E \Big[ e(\lambda)(\Tr G(z_1)-\E \Tr G(z_1)) \Big]  \dd^2z_1\,,
\end{equation}
where $\tf$ is a quasi-analytic continuation of $f$; see~\eqref{fw}. We further use the Helffer-Sj\"ostrand formula to rewrite $e(\lambda)$ as
\begin{align}\label{emini}
 e(\lambda)=\exp\Big\lbrace \frac{\ii\lambda}{\pi}\int_\C\frac{\partial}{\partial \overline{z_2}}\tf(z_2) (\Tr G(z_2)-\E \Tr G(z_2))\dd ^2 z_2\Big\rbrace\,.
\end{align}
The integration domains of the spectral parameters $z_1$ in~\eqref{phimini} and $z_2$ in~\eqref{emini} are the whole complex plane. Thanks to the mesoscopic scaling in the test function $f$, recall~\eqref{fn}, the contributions from local scales are negligible to the integral on the right sides of~\eqref{phimini} and~\eqref{emini}. More precisely, following~\cite{character}, contributions from spectral parameters with imaginary parts much smaller in absolute value than $\eta_0$ are negligible and we restrict the integration to the domains  $\Omega_1\ni z_1$ defined~\eqref{domain} and $\Omega_2\ni z_2$ defined in~\eqref{domain2}. Moreover, we can replace $e(\lambda)$ by the regularized quantity $\ea$ of~\eqref{e22}. The details are presented in Subsection~\ref{subsection char}. We also mention that on the domains $\Omega_1$ and $\Omega_2$ we have optimal control of the resolvent $G(z)$ and its normalized trace $m_{N}(z)$ in terms of local laws in Theorem~\ref{local}, which will enable us to control various error terms.

\newcommand{\bs}{\boldsymbol}

\newcommand{\la}{\langle}
\newcommand{\ra}{\rangle}

From~\eqref{phimini} and~\eqref{emini} we are led to study the correlation
\begin{align}\label{le rock}
 \E[\ea (\Tr G(z_1)-\E \Tr G(z_1))]\,.
\end{align}
This is accomplished by using the left translation invariance of the Haar measure. Let $X=X^*$ be a deterministic $N$ by $N$ matrix and let $t\in\R$, then $\mathrm{e}^{\i t X}$ belongs to $U(N)$ and $U_t:= \mathrm{e}^{\i t X}U$ is also Haar distributed as $U$ by the translation invariance. Let $M\,:\,\C^{N \times N}\rightarrow \C$ be a differentiable map and introduce $H_t:= A+U_tBU_t^*$. Then  we have that $\E [M(H_t)]$ is constant in $t$ and thence 
\begin{align}\label{le how to get wardmini}
 \frac{{\rm d}}{{\rm d} t} \bigg|_{t=0}\E [M(H_t)]=0\,.
\end{align}
With different choices of functions $M$ and matrices $X$ we can generate identities among correlations functions of Green functions. In the physics literature such relations are often referred to as {\it Ward identities}. We can produce further Ward identities by considering the matrix $\mathcal{H}_t= U_t^*AU_t+B$ and proceed as above. We will treat the matrices $H=A+UBU^*$ and $\mathcal{H}=U^*AU+B$ in tandem, the deeper reason for this is that the subordination equations in~\eqref{omega_finite} form a two-by-two system. 

Combining different Ward identities with the subordination equations, we show in Subsection~\ref{subsection ward} that
\begin{align}\label{le rock 2}
 \E[\ea (\Tr G(z_1)-\E \Tr G(z_1)]&\approx\frac{ \ii \lambda}{\pi} \E \bigg[ \ea  \int_{ \Omega_2}   \frac{\partial }{\partial \overline{z_1}} \tf(z_2) \frac{\partial }{\partial {z_2}}K(z_1,z_2) \dd^2 z_2 \bigg]\,,
\end{align}
up to a negligible error term. The kernel $K$ is explicitly given in~\eqref{kernel_K_1}. It is a linear combination of the quantities
\begin{align}\label{le rock 3}
 K_{B,1}(z_1,z_2)&:=\frac{1}{N}\sum_{j=1}^N\frac{1}{a_j-\omega_B(z_1)}(\wt BG(z_2)G(z_1))_{ii}\,,\nonumber\\ K_{B, 2}(z_1,z_2)&:=\frac{1}{N}\sum_{j=1}^N\frac{1}{a_j-\omega_B(z_1)}(G(z_2)\wt B G(z_1))_{ii}\,,
\end{align}
and respective counterparts involving the matrix $A$ and Green functions of the matrix $\mathcal{H}$.  We remark at this point that we heavily relied on the local laws in Theorem~\ref{local} to control the error term in~\eqref{le rock 2}. Identity~\eqref{le rock 2} is the main outcome of the first step in the proof; see Lemma~\ref{lemma_useful}.

In the second step of the proof we study the quantities in~\eqref{le rock 3}. The first term in~\eqref{le rock 3}, $K_{B,1}(z_1,z_2)$, is easy to understand thanks to the following identity for resolvents,
\begin{align}\label{le rock 4}
 G(z_1)-G(z_2)=(z_1-z_2)G(z_1)G(z_2)
\end{align}
which reduces the first term in~\eqref{le rock 3} to a one-point function that can be well-understood by the local laws.  However the second term in~\eqref{le rock 3}, $K_{B,2}(z_1,z_2)$ is harder to understand as we cannot use cyclicity and the resolvent identity~\eqref{le rock 4} to reduce it to a one-point function. This term in fact constitutes one of the main technical difficulty of this paper. We have singled out the analysis in Section~\ref{sec_proof_main_lemma}. Recently, similar two-point product functions have been studied for ensembles with independent entries in~\cite{Cipolloni et all} for the Hermitization of non-Hermitian random matrices.

To analyze $K_{B,2}$ it is not enough to rely only on Ward identities and local laws for the resolvent. We derive a local law for the two-point quantities $K_{B,2}(z_1,z_2)$ and $K_{A,2}(z_1,z_2)$. One strategy for that is to use the Gromov-Milman concentration inequality (see e.g. Section 4.4. of~\cite{AGZ}) to estimate $K_{B,2}-\E K_{B,2}$. However, it turns out that $K_{B,2}$ is not sufficiently regular in $z_1$ and $z_2$ to obtain an effective estimate for all mesoscopic scales. (More precisely, for $\eta_0\gg N^{-1/2}$ this method works out.)

Instead, we follow the approach of~\cite{eta1}. It relies on a decomposition of Haar measure on the unitary groups given, e.g., in~\cite{DiaSha, Mezzadri}. For any fixed $1 \leq i\leq N$, any Haar unitary $U$ can be written as 
\begin{align}\label{le first decomposition}  
U=-\mathrm{e}^{\mathrm{i}\theta_i}R_i\,U^{\langle i\rangle}\,.
\end{align}
Here $R_i$ is the {\it Householder reflection} (up to a sign) sending the vector $\bs{e}_i$ to $\bs{v}_i$, where $\bs{v}_i\in\C^N$ is a random vector distributed uniformly on the complex unit $(N-1)$-sphere, and $\theta_i\in [0,2\pi)$ is the argument of the $i$th coordinate of $\bs{v}_i$. The unitary matrix $U^{\langle i\rangle}$ has $\bs{e}_i$ as its $i$th column and its $(i,i)$-matrix minor is Haar distributed on $U(N-1)$. The gist of the decomposition in~\eqref{le first decomposition} is that $R_i$ and the unitary $U^{\langle i\rangle}$ are independent, for each fixed $1 \leq i\leq N$. Hence, the decomposition in~\eqref{le first decomposition} allows one to split off the partial randomness of the vector~$\bs{v}_i$ from~$U$. 

The analysis of $K_{B,2}$ is split into two parts: For each index $i$, we establish a concentration estimate for $(G(z_2)\wt B G(z_1))_{ii}$ around the partial average $\E_{\bs{v}_i}[(G(z_2)\wt B G(z_1))_{ii}]:=\E[(G(z_2)\wt B G(z_1))_{ii}|U^{\langle i\rangle}]$. This concentration is stronger than in the conventional Gromov-Milman inequality, as we are integrating out order $N$ variables (the entries of $\bs{v}_i$) rather than order $N^2$ variables when taking the full expectation with respect to Haar measure. The details are given in Lemmas~\ref{remove} and~\ref{concentrate_2}.

In the second part, we identify~$\E_{\bs{v}_i}(G(z_2)\wt B G(z_1))_{ii}$. Using the decomposition~\eqref{le first decomposition} and the notation $\wt{B}^{\la i\ra}:= U^{\li}B(U^{\li})^*$, one works out, using concentration estimates with respect to $\bs{v}_i$, that
\begin{align}\label{le rock 5}
 \E_{\bs{v}_i}(G(z_2)\wt B G(z_1))_{ii}\approx-\E_{\bs{v}_i}\mathrm{e}^{\ii \theta_i}\bs{v}_i^* \wt{B}^{\la i\ra}G(z_2)\wt BG(z_1)\ei\,.
\end{align}
We then introduce the two-point product functions 
\begin{align}\label{le S2t}
 S_i^{[2]}(z_1,z_2):=\mathrm{e}^{\ii \theta_i}\bs{v}_i^* \wt{B}^{\la i\ra}G(z_2)\wt BG(z_1)\ei\,, \qquad T_i^{[2]}(z_1,z_2):=\mathrm{e}^{\ii \theta_i}\bs{v}_i^* G(z_2)\wt BG(z_1)\ei\,,
\end{align}
as well as the one-point functions
\begin{align}\label{le S1t}
S_i^{[1]}(z_1):=\mathrm{e}^{\ii \theta_i}\bs{v}_i^* \wt{B}^{\la i\ra} G(z_1)\ei\,, \qquad T_i^{[1]}(z_1):=\mathrm{e}^{\ii \theta_i}\bs{v}_i^* G(z_1)\ei\,, 
\end{align}
where the latter were already used in~\cite{eta1}. (The definitions of the quantities used in Section~\ref{sec_proof_main_lemma} are for technical reasons slightly different, but for simplicity we use here the versions above.)

Next, approximating $\mathrm{e}^{-\mathrm{i}\theta_i}\bs{v}_i$ by a Gaussian vector and using integration by parts in $\E_{\bs{v}_i}S_i^{[\sharp]}$ and $\E_{\bs{v}_i}T_i^{[\sharp]}$, with $\sharp=1,2$,  we obtain a system of equations linking $\E_{\bs{v}_i}S_i^{[2]}$ and $\E_{\bs{v}_i}T_i^{[2]}$, which can approximately be solved. This step involves local laws for the quantities $S_i^{[1]}(z)$ and $T_i^{[1]}(z)$ alongside with a further Ward identity (Lemma~\ref{mathcal_Y}) that were established in~\cite{eta1}. Interestingly, it suffices to monitor the four quantities in~\eqref{le S2t} and~\eqref{le S1t} to close that system and no higher order correlation functions appear. Once we have found an expression for $\E_{\bs{v}_i}S_i^{[2]}$, we can identify $(G(z_2)\wt B G(z_1))_{ii}$ via~\eqref{le rock 5}. In this argument, we require the condition $m_{fc}'(E_0+\ii\eta_0)\not=0$ of Theorem~\ref{meso} to control some error terms. The results of this analysis are summarized in Proposition~\ref{variance_bfbg}. This will conclude Section 5.

With Proposition~\ref{variance_bfbg}, we can compute the kernel $K(z_1,z_2)$ in~\eqref{le rock 2}. In Section~\ref{sec_proof_main_theorem} we then prove Proposition~\ref{prop} and Theorem~\ref{meso} based on this result.

Along the way, we require some deterministic stability estimates on the subordination functions and the Jacobian associated with the subordination equations~\eqref{omega_finite}. Those are all collected in Section~\ref{sec_preliminaries}. The computation of the bias in the mesoscopic bulk is done in Section~\ref{sec_expectation}. In Section~\ref{sec_orthogonal}, we extend the analysis to the orthogonal setting. Finally, some technical estimates, in particular related to the concentration estimates with respect the vectors $(\bs{v}_i)$, are postponed to the Appendix.

\section{Preliminaries}\label{sec_preliminaries}
In this section, we collect some preliminary results: stability estimates and local laws for the Green function. Recall (\ref{stieltjes}) and (\ref{stieltjes_inverse}). To simplify the notation, we introduce the shorthands
$$m_{A}(z):=m_{\mu_A}(z), \quad m_{B}(z):=m_{\mu_B}(z), \quad F_{A}(z):=-\frac{1}{m_{\mu_A}(z)}, \quad F_{B}(z):=-\frac{1}{m_{\mu_B}(z)},$$
we abbreviate ${\mu}_{fc}:=\mu_A \boxplus \mu_B$, and denote the corresponding Stieltjes transform, negative reciprocal Stieltjes transform and density by $m_{fc}$, $F_{fc}$, and $\rho_{fc}$ respectively. They are $N$ dependent but deterministic. In addition, we write $\widetilde{\mu}_{fc}=\mu_\alpha \boxplus \mu_\beta$ and use $\widetilde{m}_{fc}$, $\widetilde{F}_{fc}$ and $\widetilde \rho_{fc}$ to denote the corresponding limiting Stieltjes transform, negative reciprocal Stieltjes transform and density, as $N \rightarrow \infty$.

\subsection{Properties of the subordination functions}
Recall the regular bulk $\mathcal{B}_{\mu_\alpha \boxplus \mu_\beta}$ defined in (\ref{omega}). We introduce a corresponding domain for the spectral parameter $z$,
\begin{equation}\label{bulk_domain}
D_{bulk}:=\big\{ z=E +\i \eta: E \in \mathcal{I}, \; N^{-1+\epsilon} < \eta \leq 1 \big\},
\end{equation}
where $\mathcal{I} \subset \mathcal{B}_{\mu_\alpha \boxplus \mu_\beta}$ is a nonempty compact interval and $\epsilon>0$ is a small constant. 

It was shown in \cite{stability,kargin_annal_1} that 
\begin{equation}\label{difference_m}
\max_{z \in D_{bulk}}\Big( |\omega_A(z)-\omega_\alpha(z)|+ |\omega_B(z)-\omega_\beta(z)|+ |m_{fc}(z)-\widetilde m_{fc}(z)|\Big)\leq C \Big( d_{\mathrm{L}}(\mu_A, \mu_\alpha) +d_{\mathrm{L}}(\mu_B, \mu_\beta)\Big),
\end{equation}
for $N$ sufficiently large, which directly implies~\eqref{le difference levy distance} by (\ref{levy distance assumption}).

Thanks to these convergence results, the qualitative properties of $\omega_A(z)$, $\omega_B(z)$ and $m_{fc}(z)$ asymptotically agree with the limiting $\omega_\alpha(z)$, $\omega_\beta(z)$ and $\widetilde m_{fc}(z)$ respectively, and we obtain the following estimates:

\begin{lemma}[Lemma 3.2 - 3.4 in \cite{stability}]\label{bound_bulk}
Under Assumption \ref{assumption_2}, we have the following estimates.
\begin{enumerate}
\item There exists $C>0$ such that
\begin{equation}\label{m_bound}
|m_{fc}(z)| \leq C\,; \qquad   |\omega_A(z)| \leq C\,; \qquad |\omega_B(z)| \leq C\,,
\end{equation}
uniformly for $ z \in D_{bulk}$, for sufficient large $N$.
\item There exists $c>0$ such that
\begin{equation}\label{imaginary_bound}
 |\Im m_{fc}(z)| \geq c\,; \qquad  |\Im \omega_A(z)| \geq c\,; \qquad | \Im \omega_B(z)| \geq c\,,
 \end{equation}
uniformly for $ z \in D_{bulk}$, for sufficient large $N$.
\item There exist $c, C>0$ such that
\begin{equation}\label{delta_bound}
c \leq |1-(F'_{A}(\omega_B(z))-1)(F'_{B}(\omega_A(z))-1)|\leq C\,,
\end{equation}
uniformly for $ z \in D_{bulk}$, for sufficient large $N$.
\item There exists $C>0$ such that
\begin{equation}\label{omega_prime_bound}
|\omega'_A(z)| \leq C\,; \qquad |\omega'_B(z)| \leq C\,, \qquad |m'_{fc}(z)| \leq C\,,
\end{equation}
uniformly for $ z \in D_{bulk}$, for sufficient large $N$.
\end{enumerate}
\end{lemma}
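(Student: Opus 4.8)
The plan is to establish the four estimates first for the limiting objects $\omega_\alpha$, $\omega_\beta$ and $\widetilde m_{fc}$ on a complex neighbourhood of the compact interval $\mathcal I \subset \mathcal B_{\mu_\alpha\boxplus\mu_\beta}$, and then transfer them to the finite-$N$ quantities $\omega_A$, $\omega_B$, $m_{fc}$ via the convergence estimate \eqref{difference_m}. Since $\mathcal I$ lies in the regular bulk, Belinschi's regularity results recalled after Theorem~\ref{subordination_function} guarantee that $\omega_\alpha$, $\omega_\beta$, $\widetilde F_{fc}$ extend continuously to $\R$ near $\mathcal I$, that $\widetilde\rho_{fc}$ is real analytic and strictly positive there, and (by the very definition \eqref{omega} of the regular bulk) that $\widetilde F_{fc}(x+\ii 0)\neq 0$ for $x\in\mathcal I$. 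Compactness of $\mathcal I$ upgrades the pointwise positivity of $\widetilde\rho_{fc}$ and of $|\widetilde F_{fc}|$ to uniform lower bounds on a neighbourhood with $\eta$ small, i.e.\ on the regime described by $D_{bulk}$.

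For part~(1): boundedness of $\widetilde m_{fc}$ follows from boundedness of $\widetilde\rho_{fc}$, since $\Im\widetilde m_{fc}(x+\ii 0)=\pi\widetilde\rho_{fc}(x)$ and $\Re\widetilde m_{fc}$ is the Hilbert transform of a bounded compactly supported density; boundedness of $\omega_\alpha,\omega_\beta$ follows from the subordination relations \eqref{omega_limit}, because $\widetilde F_{fc}=F_{\mu_\alpha}(\omega_\beta)$ is bounded and bounded away from $0$, so $m_{\mu_\alpha}(\omega_\beta)$ stays bounded away from $0$ and $\infty$, which together with $\Im\omega_\beta\ge\eta>0$ confines $\omega_\beta$ to a compact subset of $\C^+$ (and symmetrically for $\omega_\alpha$). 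For part~(2): $\Im\widetilde m_{fc}(x+\ii 0)=\pi\widetilde\rho_{fc}(x)$ is bounded below on $\mathcal I$; and if $\Im\omega_\beta(x+\ii 0)$ were not bounded below, then along a subsequence $\omega_\beta$ would approach $\R$, forcing—via analyticity of $F_{\mu_\alpha}$ off $\mathrm{supp}\,\mu_\alpha$, respectively the behaviour of $m_{\mu_\alpha}$ on $\mathrm{supp}\,\mu_\alpha$—either $\Im\widetilde F_{fc}\to0$ while $\widetilde F_{fc}$ stays bounded away from $0$ (contradicting $\widetilde\rho_{fc}>0$) or $\widetilde F_{fc}\to0$ (contradicting the regular-bulk condition). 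The same dichotomy handles $\Im\omega_\alpha$. This is the argument of \cite[Sect.~3]{stability}.

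Part~(3) is the heart of the matter: the quantity $\Delta(z):=1-(F'_A(\omega_B(z))-1)(F'_B(\omega_A(z))-1)$ is (up to finite-$N$ shifts of the measures) the Jacobian determinant of the subordination system, and its non-vanishing on the regular bulk is precisely the stability statement of \cite[Lemma~3.3]{stability}. The upper bound is immediate from parts~(1)--(2), since $F'_{\mu}(w)=m'_{\mu}(w)/m_{\mu}(w)^2$ stays bounded as long as $w$ ranges in a compact subset of $\C^+$ (here $w=\omega_\beta(z)$ or $\omega_\alpha(z)$). The lower bound uses the integral representation of $F'_{\mu}-1$ against $\mu$ together with the strict positivity of $\widetilde\rho_{fc}$ in the regular bulk; I would follow \cite{stability} and argue by contradiction, noting that $\Delta(x+\ii 0)=0$ on $\mathcal I$ would make the free convolution singular there. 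Part~(4) then follows by differentiating the subordination equations \eqref{omega_limit}, rewriting the result as a linear system for $(\omega'_\alpha,\omega'_\beta)$ with coefficient matrix of determinant $\Delta$, solving via Cramér's rule, and invoking parts~(1)--(3); the bound on $\widetilde m'_{fc}$ follows from $\widetilde m_{fc}=-1/F_{\mu_\alpha}(\omega_\beta)$ and the chain rule.

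Finally, to pass to finite $N$: by \eqref{difference_m} and Assumption~\ref{assumption_2}, $\sup_{D_{bulk}}\big(|\omega_A-\omega_\alpha|+|\omega_B-\omega_\beta|+|m_{fc}-\widetilde m_{fc}|\big)=o(1)$ as $N\to\infty$, so the strict inequalities in~(1)--(3) are inherited by $\omega_A$, $\omega_B$, $m_{fc}$ for $N$ large. For~(4) one cannot differentiate the $C^0$ estimate directly with the required uniformity, so instead I would rerun the Cramér's-rule argument using the finite-$N$ subordination equations \eqref{omega_finite} together with the finite-$N$ versions of~(1)--(3) just established. The only genuinely non-routine input is the lower bound in part~(3)—the stability of the free additive convolution in the regular bulk—while everything else is bookkeeping with the subordination equations and compactness; this is why the statement is quoted from \cite{stability}, and for the purposes of this paper we simply invoke it.
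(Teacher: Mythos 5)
The paper does not in fact prove this lemma---it is quoted from \cite{stability}---so there is no ``paper's own proof'' to compare against in the strict sense. The only thing the paper does is observe, in the sentence preceding the lemma, that the estimates transfer from the limiting objects to the finite-$N$ ones via the uniform convergence \eqref{difference_m}, and then, after the lemma, re-derive the $\omega'_A,\omega'_B$ bounds of part~(4) from the linearised subordination system \eqref{le system for omega primes}--\eqref{system} together with parts~(2)--(3). Your proposal reproduces exactly this scaffolding: boundedness and uniform non-degeneracy for $\omega_\alpha,\omega_\beta,\widetilde m_{fc}$ from Belinschi's regularity in the regular bulk and compactness of $\mathcal{I}$, transfer to $\omega_A,\omega_B,m_{fc}$ by \eqref{difference_m}, identification of the lower bound on $\Delta(z)$ in part~(3) as the genuinely non-trivial stability input from \cite{stability}, and re-derivation rather than differentiation for part~(4). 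This is consistent with the paper's own usage (see also the Appendix proof of Lemma~\ref{difference}, which runs the same Cram\'er's-rule argument).

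One imprecision worth flagging in your sketch of part~(2): the two-case dichotomy on whether $\omega_\beta(x+\ii0)$ lands on or off $\mathrm{supp}\,\mu_\alpha$ does not, as written, close the argument. If $\omega_\beta(x+\ii0)$ approached a point where $\mu_\alpha$ has a bounded, strictly positive absolutely continuous density, then $F_{\mu_\alpha}(\omega_\beta)$ would converge to an interior point of $\C^+$, so neither ``$\Im F_{fc}\to 0$ with $|F_{fc}|$ bounded away from $0$'' nor ``$F_{fc}\to 0$'' occurs, and no contradiction arises from that case alone. The argument actually carried out in \cite[Sect.~3]{stability} first uses the second subordination relation, $\Im\omega_\alpha+\Im\omega_\beta-\Im z=\Im F_{fc}$, together with the regular-bulk facts $\Im F_{fc}\gtrsim\rho_{fc}>0$ and $|m_{fc}|$ bounded, to conclude that \emph{at least one} of $\Im\omega_\alpha,\Im\omega_\beta$ is uniformly bounded below; it then propagates this lower bound to the other subordination function through the Nevanlinna representation of $F_{\mu_\beta}(w)-w$ (or $F_{\mu_\alpha}(w)-w$), which expresses $\Im\omega_\beta-\Im z$ as $\Im\omega_\alpha$ multiplied by a strictly positive integral whenever $\mu_\beta$ is not a point mass and $\omega_\alpha$ ranges in a compact set (given by part~(1)). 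Since you explicitly defer to \cite{stability} for the details this is not a fatal gap, but the mechanism is different from the dichotomy you describe.
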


Note that the quantity estimated in (\ref{delta_bound}), henceforth denoted
\begin{equation}\label{delta}
\Delta(z):=1-(F'_{A}(\omega_B(z))-1)(F'_{B}(\omega_A(z))-1)\,,
\end{equation}
is the Jacobian of the following linear system obtained by differentiating the subordination equations~(\ref{omega_finite}),
\begin{align}\label{le system for omega primes}
\begin{pmatrix}
1& 1- F'_{A}(\omega_B(z))\\
1-F'_{B}(\omega_A(z)) & 1
\end{pmatrix}
\begin{pmatrix}
\omega_A'(z)\\
\omega_B'(z)
\end{pmatrix}
=
\begin{pmatrix}
1\\
1
\end{pmatrix}\,,\qquad z\in \C^+\,.
\end{align}
Since the Jacobian $\Delta(z)$ is non-vanishing for $z \in D_{bulk}$, we hence get from~\eqref{le system for omega primes} that 
\begin{equation}\label{system}
\begin{pmatrix}
\omega_A'(z)\\
\omega_B'(z)
\end{pmatrix}
=\frac{1}{\Delta(z)}
\begin{pmatrix}
F'_{A}(\omega_B(z))\\
F'_{B}(\omega_A(z))
\end{pmatrix}
=\frac{1}{\Delta(z) m^2_{fc}(z)}
\begin{pmatrix}
m'_{A}(\omega_B(z))\\
m'_{B}(\omega_A(z))
\end{pmatrix},
\end{equation}
where 
\begin{equation}\label{m_A_m_B}
 m'_A(\omega_B(z))=\frac{1}{N}  \sum_{j=1}^N \frac{1}{(a_j-\omega_B(z))^2};\qquad  m'_B(\omega_A(z))=\frac{1}{N}  \sum_{j=1}^N \frac{1}{(b_j-\omega_A(z))^2}.
\end{equation}
In combination with the lower bounds in (\ref{imaginary_bound}) and (\ref{delta_bound}), the linear system (\ref{system}) yields that $\omega'_A(z), \omega'_B(z)$ are of constant order in the regular bulk, as stated in (\ref{omega_prime_bound}).
Furthermore, by the subordination equations~\eqref{omega_finite}, we have
\begin{equation}\label{self_m_fc}
m_{fc}(z)=m_A(\omega_B(z))=m_B(\omega_A(z)), \qquad \omega_A+\omega_B-z=F_{fc}(z)=-\frac{1}{m_{fc}(z)}.
\end{equation}
Differentiating (\ref{self_m_fc}) with respect to $z$, we find that
\begin{equation}\label{differential_relation}
m_{fc}'(z)=\omega'_A(z) m'_B(\omega_A(z))=\omega'_B(z) m'_A(\omega_B(z)), \qquad \omega_A'(z)+\omega_B'(z)-1=F_{fc}'(z)=\frac{m_{fc}'(z)}{m_{fc}(z)^2}.
\end{equation}
The first relation in (\ref{differential_relation}) implies that $m'_{fc}(z)$ is also of constant order as are $\omega'_A(z)$ and $\omega'_B(z)$. If $m_{fc}'(z)$ is not zero, neither are the factors $\omega_A'(z)$ and $\omega_B'(z)$. Thus
\begin{equation}\label{LMprime}
 m'_A(\omega_B(z))=\frac{m_{fc}'(z)}{\omega'_B(z)};\qquad  m'_B(\omega_A(z))=\frac{m_{fc}'(z)}{\omega'_A(z)}.
\end{equation}
In addition, we have the following lemma whose proof is postponed to the Appendix.
\begin{lemma}\label{difference}
Under Assumption \ref{assumption_2}, we have
\begin{align}
\max_{z \in D_{bulk}}\Big(  |\omega'_A(z)-\omega'_\alpha(z)|+ |\omega'_B(z)-\omega'_\beta(z)|+ |m'_{fc}(z)-\widetilde m'_{fc}(z)|\Big) \leq C \Big( d_{\mathrm{L}}(\mu_A, \mu_\alpha) +d_{\mathrm{L}}(\mu_B, \mu_\beta)\Big)\,,
\end{align}
for $N$ sufficiently large.
\end{lemma}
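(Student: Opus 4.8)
The plan is to leverage the already-established convergence estimate \eqref{difference_m} for the subordination functions themselves, together with the explicit formula \eqref{system} expressing $\omega'_A, \omega'_B$ in terms of the subordination functions and the traces $m'_A(\omega_B(z))$, $m'_B(\omega_A(z))$. The heuristic is that differentiating the subordination equations produces a linear system \eqref{le system for omega primes} whose coefficients depend only on $z$ through $\omega_A(z)$, $\omega_B(z)$ and the functions $F_A$, $F_B$, $F_\alpha$, $F_\beta$; once we control how close these coefficients are in the $N$-dependent and limiting cases, solving the two linear systems and subtracting gives the bound.

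Concretely, first I would write down the analogue of \eqref{le system for omega primes} for the limiting quantities $\omega'_\alpha, \omega'_\beta$, with Jacobian $\widetilde\Delta(z):=1-(F'_\alpha(\omega_\beta(z))-1)(F'_\beta(\omega_\alpha(z))-1)$, which is bounded below by a slight variant of Lemma~\ref{bound_bulk}(3) (the limiting measures satisfy the same bulk estimates since $\mathcal I\subset\mathcal B_{\mu_\alpha\boxplus\mu_\beta}$). Then I would estimate the difference of the two coefficient matrices: the entries are $1-F'_A(\omega_B(z))$ versus $1-F'_\alpha(\omega_\beta(z))$, and so on. Writing
$$F'_A(\omega_B(z))-F'_\alpha(\omega_\beta(z))=\big(F'_A(\omega_B(z))-F'_A(\omega_\beta(z))\big)+\big(F'_A(\omega_\beta(z))-F'_\alpha(\omega_\beta(z))\big),$$
the first bracket is controlled by $|\omega_B(z)-\omega_\beta(z)|$ times a bound on $F''_A$ on a neighborhood of $\omega_\beta(D_{bulk})$, and the second bracket is a difference of the same analytic functional applied to $\mu_A$ versus $\mu_\alpha$ at a point with imaginary part bounded below (since $\Im\omega_\beta(z)\ge c$ by \eqref{imaginary_bound}), hence controlled by the L\'evy distance $d_{\mathrm L}(\mu_A,\mu_\alpha)$ via a standard Stieltjes-transform-stability estimate. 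The key point is that all evaluation points stay a fixed distance away from the supports, so all the relevant derivatives of Stieltjes transforms are uniformly bounded and Lipschitz in the measure with respect to L\'evy distance.

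Having bounded $\|\text{(coefficient matrix)} - \text{(limiting coefficient matrix)}\|$ and the difference of the right-hand sides (which is zero here, both being $(1,1)^T$), and having the uniform lower bounds $|\Delta(z)|, |\widetilde\Delta(z)|\ge c$ together with uniform upper bounds on all entries, a routine perturbation estimate for the solutions of two $2\times2$ linear systems $Mx=b$, $\widetilde M\widetilde x=b$ — namely $x-\widetilde x = M^{-1}(\widetilde M-M)\widetilde x$ — yields
$$|\omega'_A(z)-\omega'_\alpha(z)|+|\omega'_B(z)-\omega'_\beta(z)|\le C\big(d_{\mathrm L}(\mu_A,\mu_\alpha)+d_{\mathrm L}(\mu_B,\mu_\beta)\big),$$
uniformly on $D_{bulk}$. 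Finally, the bound on $|m'_{fc}(z)-\widetilde m'_{fc}(z)|$ follows from the relation $F'_{fc}(z)=\omega'_A(z)+\omega'_B(z)-1$ in \eqref{differential_relation} (and its tilde analogue), combined with $m'_{fc}=-F'_{fc}/F_{fc}^2$, the already-established closeness of $m_{fc}$ to $\widetilde m_{fc}$ from \eqref{difference_m}, and the uniform lower bound $|\Im m_{fc}|\ge c$ which keeps $F_{fc}$ away from zero. The main obstacle is the bookkeeping in the second step: verifying that the second-derivative bounds on $F_A$, $F_B$ and the L\'evy-distance stability of $m'_\mu$, $F'_\mu$ all hold uniformly on the relevant compact neighborhoods of $\omega_A(D_{bulk})$, $\omega_B(D_{bulk})$ — this is where one must use that the subordination functions map $D_{bulk}$ into a region with imaginary part bounded below, so that one never evaluates Stieltjes transforms too close to the real axis.
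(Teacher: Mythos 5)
Your proposal is correct and follows essentially the same route as the paper: compare the $2\times 2$ linear systems determining $(\omega'_A,\omega'_B)$ and $(\omega'_\alpha,\omega'_\beta)$, control the coefficient differences via the a priori stability estimate \eqref{difference_m} together with the bulk bounds of Lemma~\ref{bound_bulk} (and the uniform lower bound $\Im\omega_\beta\ge c$ so that evaluation points stay away from the real axis), and pass to $m'_{fc}$ via the differentiated subordination relations \eqref{differential_relation}. The paper instead telescopes the explicit solution formula \eqref{system}, written in terms of $m'_A,m'_B$ rather than $F'_A,F'_B$ (which sidesteps the small additional observation your route needs, that $m_A(\omega_\beta(z))$ is bounded away from zero so $F''_A$ is under control), but this is a cosmetic difference, not a different argument.
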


\subsection{Variance kernel $\mathcal{K}(z_1,z_2)$}\label{another_formula}
In this subsection, we define some functions in terms of the subordination functions for later purpose and then rewrite the kernel (\ref{kernel}) of the variance expression (\ref{vf}) of the linear statistics in a form without singularities. Generalizing (\ref{delta}) and (\ref{m_A_m_B}), we introduce the following functions of two spectral parameters $z_1,z_2 \in \C \setminus \R$, 
\begin{align}
L_A(z_1,z_2)&:=\frac{1}{N}  \sum_{j=1}^N \frac{1}{(b_j-\omega_A(z_1))(b_j-\omega_A(z_2))}\,;\nonumber\\
L_B(z_1,z_2)&:= \frac{1}{N}  \sum_{j=1}^N \frac{1}{(a_j-\omega_B(z_1))(a_j-\omega_B(z_2))}\,;\label{LM_def}
\end{align}
and 
\begin{align}
\Delta(z_1,z_2)&:=1-\Big(\frac{L_A(z_1,z_2)}{m_{fc}(z_1)m_{fc}(z_2)}-1\Big)\Big(\frac{L_B(z_1,z_2)}{m_{fc}(z_1)m_{fc}(z_2)}-1\Big)\,.\label{delta2}
\end{align}
Note that if $z_1=z_2=z$, then 
\begin{equation}\label{LM_def_zz}
L_A(z):=L_A(z,z)=m'_B(\omega_A(z)), \qquad L_B(z):=L_B(z,z)=m'_A(\omega_B(z)); \qquad \Delta(z)=\Delta(z,z).
\end{equation}
As an analogue of (\ref{differential_relation}), for $z_1\not= z_2$, we have from \eqref{self_m_fc} that
\begin{align*}
m_{fc}(z_1)-m_{fc}(z_2)=\frac{1}{N}  \sum_{j=1}^N \frac{\omega_B(z_1)-\omega_B(z_2)}{(a_j-\omega_B(z_1))(a_j-\omega_B(z_2))}=(\omega_B(z_1)-\omega_B(z_2) )L_B(z_1,z_2)\\
=\frac{1}{N}  \sum_{j=1}^N \frac{\omega_A(z_1)-\omega_A(z_2)}{(b_j-\omega_A(z_1))(b_j-\omega_A(z_2))}=(\omega_A(z_1)-\omega_A(z_2)) L_A(z_1,z_2).
\end{align*}
If we choose $z_1 \neq z_2$ such that the difference $m_{fc}(z_1) - m_{fc}(z_2)$ is nonzero, then by the above relation neither are  $\omega_A(z_1)-\omega_A(z_2)$ and $\omega_B(z_1) - \omega_B(z_2)$. Therefore, dividing these two factors on both sides, we have
\begin{equation}\label{LM}
L_A(z_1,z_2)=\frac{m_{fc}(z_1)-m_{fc}(z_2)}{\omega_A(z_1)-\omega_A(z_2)}; \qquad L_B(z_1,z_2)=\frac{m_{fc}(z_1)-m_{fc}(z_2)}{\omega_B(z_1)-\omega_B(z_2)}.
\end{equation}
Using (\ref{LM}), we obtain the analogue of (\ref{system}), i.e.,
\begin{equation}\label{system2}
\begin{pmatrix}
\omega_A(z_1)-\omega_A(z_2)\\
\omega_B(z_1)-\omega_B(z_2)
\end{pmatrix}
=\frac{z_1-z_2}{\Delta(z_1,z_2) m_{fc}(z_1)m_{fc}(z_2)}
\begin{pmatrix}
L_B(z_1,z_2)\\
L_A(z_1,z_2)
\end{pmatrix}.
\end{equation}
Therefore, the kernel $\mathcal{K}$ in (\ref{kernel}) of the variance expression can be written as 
$$\mathcal{K}(z_1,z_2)=\pzab \log \Big( \frac{(\omega_A(z_1)-\omega_A(z_2)) (\omega_B(z_1)-\omega_B(z_2)) }{(z_1-z_2)(F_{fc}(z_1)-F_{fc}(z_2))} \Big)=- \pzab \log (\Delta(z_1,z_2)).$$ 
The benefit of this form is to avoid singularities caused by $z_1=z_2$ or $m_{fc}(z_1) = m_{fc}(z_2)$, since $L_A(z_1,z_2)$, $L_B(z_1,z_2)$ as well as $\Delta(z_1,z_2)$ are well-defined functions for all $z_1,z_2 \in D_{bulk}$. 

Similarly, using (\ref{system}) and (\ref{LMprime}), we can rewrite $b(z)$ in the bias formula (\ref{bz}) as
$$b(z)=\frac{1}{2} \Big( \frac{2}{\bm\beta} -1\Big) \frac{\dd}{\dd z} \log \Big( \frac{\omega'_A(z) \omega'_B(z)m^2_{fc}(z)}{m'_{fc}(z)}\Big)=-\frac{1}{2} \Big( \frac{2}{\bm\beta} -1\Big) \frac{\dd}{\dd z} \log \Delta(z).$$

\subsection{Local law for the Green function}
We end this section by stating the local laws for the Green functions of $H$ and $\mathcal{H}$ in (\ref{tilde quantities}) and~\eqref{m_N_definition}. For this purpose, we introduce the deterministic control parameter
\begin{equation}\label{control}
\Psi \equiv \Psi(z):=\frac{1}{\sqrt{N |\eta|}}\,,\qquad \qquad z=E+\ii\eta\in\C \setminus \R\,.
\end{equation}

\begin{theorem}[Theorem 2.5 in \cite{eta1},  Theorem 2.4 in \cite{eta2}]\label{local}
Under Assumption \ref{assumption_2} and (\ref{assumption_1}), the following estimates
\begin{align*}
&\max_{i,j}\left|G_{ij}(z)-\frac{1}{a_i-\omega_B(z)} \delta_{ij} \right| \prec \Psi(z) ,\qquad & |m_{N}(z)-m_{fc}(z) | \prec \Psi^2(z),&\\
&\max_{i,j}\left|(\tb G(z))_{ij}-\frac{z-\omega_B(z)}{a_i-\omega_B(z)} \delta_{ij} \right| \prec \Psi(z),\qquad  &\Big|\frac{1}{N}\mathrm{Tr}(\tb G(z))-(z-\omega_B)m_{fc}(z)\Big| \prec \Psi^2(z),&
\end{align*}
hold uniformly for all $z\in D_{bulk}$.

Furthermore, for any deterministic and uniformly bounded $d_1, \cdots, d_N \in \C$, we have
\begin{equation}\label{average}
\Big| \frac{1}{N} \sum_{i=1}^N d_i\Big(G_{ii}(z)-\frac{1}{a_i-\omega_B(z)} \Big) \Big| \prec \Psi^2(z)\,,
\end{equation}
uniformly for all $z\in D_{bulk}$. The same estimates hold for the Green function $\mathcal{G}$ in (\ref{tilde quantities}) with the roles of $A$ and $B$ interchanged. 
\end{theorem}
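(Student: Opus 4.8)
The plan is to prove the local laws by deriving an approximate, random version of the subordination system~\eqref{omega_finite} for the Green functions of $H$ and $\mathcal{H}$ simultaneously, and then invoking the deterministic stability of that system collected in Lemma~\ref{bound_bulk}. The starting point is the trivial regime $\im z \sim 1$: there $\|G(z)\|_{\mathrm{op}}\le (\im z)^{-1}$ is bounded, and by the Gromov--Milman concentration inequality for Haar measure (see e.g.\ Section~4.4 of~\cite{AGZ}) the Lipschitz functionals $m_N(z)$, $\frac{1}{N}\Tr(\tb G(z))$ and each entry $G_{ij}(z)$ concentrate around their expectations at rate $O_\prec(N^{-1/2}) = O_\prec(\Psi)$; together with the macroscopic convergence of $\E G$ to its deterministic limit this yields all four estimates at the top scale. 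The whole argument then runs as a bootstrap in $\eta = \im z$: assuming a slightly weaker version of the target bounds at scale $2\eta$, one upgrades them to the sharp form at scale $\eta$, down to $\eta \gtrsim N^{-1}$.

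The core step is the derivation of the self-consistent equation. Fix an index $1\le i\le N$ and use the Householder decomposition $U = -\mathrm{e}^{\mathrm{i}\theta_i} R_i\, U^{\langle i\rangle}$ of~\eqref{le first decomposition}, which splits off the randomness of the vector $\bm v_i$, uniform on the complex unit sphere, from the rest. Expanding $G_{ii}(z)$ and $(\tb G(z))_{ii}$ around their partial expectations $\E_{\bm v_i}[\cdot] := \E[\cdot\,|\,U^{\langle i\rangle}]$, then replacing $\mathrm{e}^{-\mathrm{i}\theta_i}\bm v_i$ by a Gaussian vector and integrating by parts in the $\bm v_i$ variables — using the one-point local laws for the quantities $S_i^{[1]}, T_i^{[1]}$ of~\eqref{le S1t} and a Ward identity, both established in~\cite{eta1} — one obtains, up to errors $O_\prec(\Psi)$,
\[
G_{ii}(z) = \frac{1}{a_i - \widehat\omega_B(z)} + O_\prec(\Psi)\,, \qquad \frac{1}{N}\Tr\big(\tb G(z)\big) = \big(z - \widehat\omega_B(z)\big)\, m_N(z) + O_\prec(\Psi^2)\,,
\]
where $\widehat\omega_B(z)$ is a random quantity built out of $m_N(z)$ and $\frac{1}{N}\Tr(\tb G(z))$, and likewise $G_{ij}(z) = O_\prec(\Psi)$ for $i\ne j$ since the leading term there vanishes. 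Carrying out the symmetric computation for $\mathcal{G}$ with $A$ and $B$ interchanged produces a random $\widehat\omega_A(z)$, and the pair $(\widehat\omega_A,\widehat\omega_B)$ is shown to satisfy $F_{\mu_A}(\widehat\omega_B) = F_{\mu_B}(\widehat\omega_A) + O_\prec(\Psi^2)$ and $\widehat\omega_A + \widehat\omega_B - z = F_{\mu_A}(\widehat\omega_B) + O_\prec(\Psi^2)$. The improvement from $\Psi$ to $\Psi^2$ for the normalized traces and for the averaged quantity~\eqref{average} is a \emph{fluctuation-averaging} statement: one estimates $\frac{1}{N}\sum_i d_i\big(G_{ii} - (a_i-\omega_B(z))^{-1}\big)$ by exploiting, for each $i$, the conditional independence afforded by~\eqref{le first decomposition}, which produces cancellations invisible to a single application of the full-Haar concentration.

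It remains to pass from the approximate subordination system to the true one. Here the bound~\eqref{delta_bound} of Lemma~\ref{bound_bulk} is decisive: the Jacobian $\Delta(z)$ of~\eqref{le system for omega primes}, which is exactly the linearization of~\eqref{omega_finite}, is bounded away from zero on $D_{bulk}$, so a direct perturbative argument shows that any solution of the perturbed system with $O_\prec(\Psi^2)$ inhomogeneity lies within $O_\prec(\Psi^2)$ of $(\omega_A(z),\omega_B(z))$. Consequently $\widehat\omega_B(z) = \omega_B(z) + O_\prec(\Psi^2)$, which turns the displayed identities into the asserted entrywise law $G_{ii}(z) = (a_i - \omega_B(z))^{-1} + O_\prec(\Psi)$, the bound $|m_N(z) - m_{fc}(z)| \prec \Psi^2$, and the $\tb G$ estimates, for each fixed $z\in D_{bulk}$. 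Uniformity over $z\in D_{bulk}$ follows from a standard net argument: $G(z)$ is Lipschitz in $z$ with constant $O(\eta^{-2})$, so controlling the estimates on a polynomially fine grid controls them everywhere, and the parameters $\epsilon,D$ in the definition of $\prec$ are handled by a union bound.

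The main obstacle is precisely the fluctuation-averaging step that yields $\Psi^2$ rather than $\Psi$ for the traces. Without it the self-consistent equation cannot be closed below $\eta \sim N^{-1/2}$, since the error in the subordination system would match or exceed the very quantity being estimated. Making it work requires the partial-randomness decomposition~\eqref{le first decomposition} together with sharp concentration for the vectors $\bm v_i$ (integration by parts against the approximating Gaussian, with all higher-order terms absorbed into the a priori bounds from the previous bootstrap scale), and this is where the bulk of the technical effort of~\cite{eta1,eta2} is spent; the remaining ingredients — deriving the identities, applying the stability Lemma~\ref{bound_bulk}, and the net argument — are comparatively routine.
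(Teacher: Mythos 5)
The paper does not prove Theorem~\ref{local}: it is imported verbatim from~\cite{eta1,eta2}, and the only "proof" given here is the citation. Your sketch is a faithful reconstruction of the strategy of those references as it is also summarized in Subsection~\ref{subsection outline of proof} and reused in Section~\ref{sec_proof_main_lemma}---the Householder partial-randomness decomposition~\eqref{le first decomposition}, Gaussian integration by parts for the quantities $S_i^{[1]},T_i^{[1]}$, fluctuation averaging, stability of the subordination system via~\eqref{delta_bound}, and a bootstrap in $\eta$---so it takes essentially the same route as the cited proof; one small imprecision is that at the base scale $\eta\sim 1$ the Gromov--Milman inequality already gives $\Psi^2=1/N$ (not $\Psi=N^{-1/2}$) concentration for the normalized traces, since the Hilbert--Schmidt Lipschitz constant of $m_N$ carries an extra factor $N^{-1/2}$, but this does not affect the structure of the argument.
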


\section{Proof of Proposition \ref{prop} and Theorem \ref{meso}}\label{sec_proof_main_theorem}

In this section, we give the proof of Proposition~\ref{prop} and Theorem~\ref{meso} for the unitary case $\bm\beta=2$. The orthogonal case $\bm\beta=1$ is proved similarly in Section \ref{sec_orthogonal}.

\subsection{Characteristic function and its derivative}\label{subsection char}
 The idea is to study the derivative of the characteristic function of the linear eigenvalue statistics and link it with the resolvent of $H_N$ via the Helffer-Sj\"ostrand calculus. Let $f$ be the mesoscopic test function introduced in~\eqref{fn}. Recall from~\eqref{mme} the characteristic function
\begin{equation}\label{mmeZG}
\phi(\lambda):=\E[e(\lambda)], \quad  \mbox{with}~e(\lambda):=\exp \big\{ \i \lambda(\Tr f(H_N)-\E \Tr f(H_N)) \big\}\,,\quad \qquad\lambda \in \R\,.
\end{equation}
The following lemma is a version of the well-known Helffer-Sj\"ostrand formula.
\begin{lemma}\label{helffler}(Helffer-Sj\"ostrand formula) 
	Let $\chi(y)$ be a smooth cutoff function with support in $[-2,2]$, with $\chi(y)=1$ for $|y| \leq 1$. Define the almost-analytic extension of $f$ by 
	\begin{align}\label{le almost analytic extension}
	\tilde{f}(x+\ii y):=(f(x)+\ii y f'(x)) \chi(y)\,.
	\end{align} 
	Then, for any $w\in\R$,
	\begin{align}\label{le helffer sjostrand formula}
	f(w)=\frac{1}{\pi} \int_{\C}\pzz \frac{ \tilde{f}(z)}{w-z} \dd^2z=\frac{1}{2 \pi} \int_{\R^2} \frac{\ii y f''(x) \chi(y)+\ii \big( f(x)+\ii y f'(x) \big) \chi'(y)}{w-x-\ii y} \dd x \dd y\,,
	\end{align}
	where $z=x+\ii y$, $\pzz=\frac{1}{2}(\px+\ii \py)$, and $\dd ^2z$ denotes Lebesgue measure on $\C$.
\end{lemma}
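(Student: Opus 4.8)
The plan is the classical Cauchy--Pompeiu argument. First I would record that since $f\in C_c^2(\R)$ and $\chi$ is a smooth compactly supported cutoff, the function $\tilde f$ defined in \eqref{le almost analytic extension} is $C^1$ on $\C$ with compact support; this is exactly where $f\in C^2$ (rather than merely $C^1$) is used, since $\pzz \tilde f$ will involve $f''$. Fix $w\in\R$. The map $z\mapsto \tilde f(z)/(w-z)$ is then $C^1$ on $\C\setminus\{w\}$, and on that set $1/(w-z)$ is holomorphic in $z$, so $\pzz\big(\tilde f(z)/(w-z)\big)=\pzz\tilde f(z)/(w-z)$ pointwise off $z=w$ (which is all that matters for Lebesgue integration); this is the meaning of the shorthand in the first displayed identity.

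Next I would excise a small disk $D_\epsilon(w)$ of radius $\epsilon>0$ about $w$ and apply the two--dimensional Stokes' formula
\[
\int_{\Omega}\pzz g(z)\,\dd^2 z=\frac{1}{2\ii}\oint_{\partial\Omega} g(z)\,\dd z
\]
to $\Omega=\Omega_\epsilon:=\C\setminus \overline{D_\epsilon(w)}$ and $g(z)=\tilde f(z)/(w-z)$. Because $\tilde f$ has compact support the outer boundary contributes nothing, so only the circle $|z-w|=\epsilon$ survives, traversed clockwise relative to $\Omega_\epsilon$. Parametrizing $z=w+\epsilon\mathrm{e}^{\ii\theta}$, $\dd z=\ii\epsilon\mathrm{e}^{\ii\theta}\dd\theta$, $w-z=-\epsilon\mathrm{e}^{\ii\theta}$, the boundary term reduces to $\tfrac12\int_0^{2\pi}\tilde f(w+\epsilon\mathrm{e}^{\ii\theta})\,\dd\theta$, which converges to $\pi\tilde f(w)=\pi f(w)$ as $\epsilon\downarrow 0$ by continuity of $\tilde f$ and the fact that $\tilde f(x)=f(x)$ for real $x$ (here $\chi(0)=1$). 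On the other side, $\pzz\tilde f$ is bounded with compact support and $1/|w-z|$ is locally integrable on $\R^2\cong\C$, so $\int_{\Omega_\epsilon}\pzz\tilde f(z)/(w-z)\,\dd^2 z\to \int_{\C}\pzz\tilde f(z)/(w-z)\,\dd^2 z$. Combining the two limits gives $f(w)=\tfrac1\pi\int_{\C}\pzz\tilde f(z)/(w-z)\,\dd^2 z$, the first identity.

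For the second identity I would simply compute $\pzz\tilde f$ with $\pzz=\tfrac12(\px+\ii\py)$: from $\tilde f(x+\ii y)=(f(x)+\ii y f'(x))\chi(y)$ one gets $\px\tilde f=(f'(x)+\ii y f''(x))\chi(y)$ and $\py\tilde f=\ii f'(x)\chi(y)+(f(x)+\ii y f'(x))\chi'(y)$, and the two $f'(x)\chi(y)$ contributions cancel, leaving
\[
\pzz\tilde f(x+\ii y)=\tfrac12\Big(\ii y f''(x)\chi(y)+\ii\big(f(x)+\ii y f'(x)\big)\chi'(y)\Big).
\]
Substituting this and $w-z=w-x-\ii y$ into the first identity yields the stated formula.

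I do not expect a genuine obstacle here: the statement is a standard lemma. The only points that require care are (i) checking that $\tilde f\in C^1_c(\C)$ so that Green's formula is legitimately applicable, (ii) the local integrability of the kernel $1/(w-z)$, which is what allows both the improper integral on $\C$ to make sense and the interchange of the $\epsilon\downarrow 0$ limit with integration, and (iii) bookkeeping of the boundary orientation so that the residue at $z=w$ comes out with the correct sign $+\pi f(w)$.
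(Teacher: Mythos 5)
Your proposal is correct; it is the standard Cauchy--Pompeiu/Stokes argument for the Helffer--Sj\"ostrand formula. The paper simply states this lemma as a well-known result and does not supply a proof, so there is nothing to compare against, but your derivation is exactly the classical one: excise a small disk around $w$, apply Stokes' theorem, track the boundary orientation to produce the $\pi f(w)$ residue term, and pass to the limit using $C^1_c$ regularity of $\tilde f$ together with local integrability of $1/(w-z)$ in two dimensions; the explicit computation of $\pzz\tilde f$ (including the cancellation of the $f'(x)\chi(y)$ terms) matches the second displayed identity.
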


Using~\eqref{le almost analytic extension} and the definition of the Green function in~\eqref{tilde quantities}, the spectral calculus yields the following representation of the linear eigenvalue statistics,
\begin{equation}\label{fw}
\Tr f(H_N) -\E \Tr f(H_N) =\frac{1}{\pi} \int_{\C} \pzz \tf(z) (\Tr G(z)-\E \Tr G(z))\dd^2z\,.
\end{equation}
Then taking the derivative of the characteristic function $\phi(\lambda)$, we obtain
\begin{equation}\label{phi}
\phi'(\lambda)=\i \E\Big[  (\Tr f(H_N)-\E \Tr f(H_N)) e(\lambda)\Big]=\frac{\i}{\pi}\int_{\C} \pzz \tf(z)   \E \Big[ e(\lambda)(\Tr G(z)-\E \Tr G(z)) \Big]  \dd^2z.
\end{equation}
As an observation in \cite{character}, instead of working on the $\C$, we can remove the ultra-local, or sub-mesoscopic, scales and restrict the integration domain in~\eqref{phi} to 
\begin{equation}\label{domain}
\Omega_1 := \big\{ z_1:=x_1 + \i y_1 \in \C: |y_1| \geq N^{-\tau} \eta_0 \big\},
\end{equation}
with $\tau>0$ and $\eta_0$ as in Proposition~\ref{prop}, without effecting the mesoscopic linear eigenvalue statistics.  Indeed, using that $y_1 \rightarrow \Im m_N(z_1)y_1$ is increasing, we can extend the local law as follows:
\begin{equation}\label{mmp}
\left| \Tr G(x_1+\i y_1)-\E \Tr G(x_1+\i y_1) \right| = O_{\prec}\Big(\frac{1}{|y_1|}\Big),
\end{equation}
uniformly in $|y_1|>0$ and $x_1 \in \mathcal{I}$; see (\ref{bulk_domain}). In addition, due to (\ref{fn}), there exists some $C>0$ such that
\begin{equation}\label{assumpf}
\int_{\R} |f(x)| \dd x \leq C \eta_0; \qquad \int_{\R} |f'(x)| \dd x \leq C'; \qquad \int_{\R} |f''(x)| \dd x \leq \frac{C''}{\eta_0},
\end{equation}
and thus we have
\begin{equation}\label{fw2}
\Tr f(H_N) -\E \Tr f(H_N)=\frac{1}{ \pi}  \int_{\Omega_1}  \frac{\partial }{\partial \overline{z_1}} \tf(z_1) (\Tr(G(z_1))-\E \Tr G(z_1)) \dd^2 z_1+O_{\prec}(N^{-\tau}).
\end{equation}
Similarly, we remove the ultra-local scales in the integral domain in the expression of $e(\lambda)$ and define
\begin{equation}\label{e22}
\ea:=\exp\Big\{ \frac{\i \lambda}{ \pi}   \int_{\Omega_2}  \frac{\partial }{\partial \overline{z_2}} \tf(z_2) (\Tr(G(z_2))-\E \Tr G(z_2)) \dd^2 z_2  \Big\},
\end{equation}
where
\begin{equation}\label{domain2}
\Omega_2 := \Big\{ z_2:=x_2 + \i y_2 \in \C: |y_2| \geq \frac{1}{2} N^{-\tau} \eta_0 \Big\}\,.
\end{equation}
It is straightforward to check that $\ea$ approximates $e(\lambda)$ as
\begin{equation}\label{e2}
|e(\lambda)-\ea|=O_{\prec}\big( |\lambda| N^{ -\tau}  \big)\,.
\end{equation}
Summarizing the above, we have the following lemma. 
\begin{lemma} Under the assumptions of Proposition~\ref{prop}, we have the representation
\begin{equation}\label{newphi}
\phi'(\lambda)=\frac{\i}{\pi}\int_{\Omega_1} \frac{\partial }{\partial \overline{z_1}} \tf(z_1)   \E \Big[ \ea (\Tr(G(z_1))-\E \Tr G(z_1)) \Big]  \dd^2z_1+O_{\prec}\Big( |\lambda| (\log N) N^{ -\tau} \Big).
\end{equation}
\end{lemma}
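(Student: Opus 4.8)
The plan is to derive \eqref{newphi} from \eqref{phi} by a two-stage truncation: first replace the full correlation $\E[e(\lambda)(\Tr G(z_1)-\E\Tr G(z_1))]$ by $\E[\ea(\Tr G(z_1)-\E\Tr G(z_1))]$, and then restrict the outer $z_1$-integral from $\C$ to $\Omega_1$. For the first stage I would write $e(\lambda)-\ea$ and bound the difference. From the Helffer-Sj\"ostrand representation \eqref{fw} of $\Tr f(H_N)-\E\Tr f(H_N)$ and the definition \eqref{e22} of $\ea$, the exponents differ only by the integral over the sub-mesoscopic region $\C\setminus\Omega_2=\{|y_2|<\tfrac12 N^{-\tau}\eta_0\}$. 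On that region I would split into $|y_2|\le 1/N^{10}$ (say) where the explicit bounds \eqref{assumpf} on $\int|f|,\int|f'|,\int|f''|$ together with the crude deterministic bound $|\Tr G(z_2)-\E\Tr G(z_2)|\le 2N/|y_2|$ give a negligible contribution, and $1/N^{10}\le|y_2|<\tfrac12 N^{-\tau}\eta_0$ where the improved a priori bound \eqref{mmp}, namely $|\Tr G(z_2)-\E\Tr G(z_2)|=O_\prec(1/|y_2|)$, is used. Carrying out the elementary $\dd x_2\,\dd y_2$ integral with the $\chi,\chi'$ weights — note $\chi'$ is supported on $|y_2|\ge 1$, hence irrelevant here, so only the $\ii y_2 f''(x_2)\chi(y_2)$ and $\ii f(x_2)\chi(y_2)/(w-z_2)$-type terms survive — produces a bound $O_\prec(|\lambda|N^{-\tau})$ for the exponent difference, whence $|e(\lambda)-\ea|=O_\prec(|\lambda|N^{-\tau})$, which is exactly \eqref{e2}. (Both $e(\lambda)$ and $\ea$ have modulus at most one times a stochastically dominated factor, so exponentiating the exponent bound is legitimate after the usual $|\mathrm{e}^{a}-\mathrm{e}^{b}|\le|a-b|\max(|\mathrm{e}^a|,|\mathrm{e}^b|)$ estimate on the good event.)

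Next I would insert this into \eqref{phi}. Since $|\mathrm{e}(\lambda)-\ea|=O_\prec(|\lambda|N^{-\tau})$ and $|\Tr G(z_1)-\E\Tr G(z_1)|=O_\prec(1/|y_1|)$ by \eqref{mmp}, the replacement error in \eqref{phi} is
\begin{align*}
\frac{1}{\pi}\int_{\C}\Big|\pzz\tf(z_1)\Big|\cdot O_\prec\!\Big(\frac{|\lambda|N^{-\tau}}{|y_1|}\Big)\dd^2 z_1
\;=\;O_\prec\Big(|\lambda|N^{-\tau}\int_{\C}\frac{|\pzz\tf(z_1)|}{|y_1|}\dd^2 z_1\Big)\,.
\end{align*}
Using $\pzz\tf(z_1)=\tfrac12(\ii y_1 f''(x_1)\chi(y_1)+\ii(f(x_1)+\ii y_1 f'(x_1))\chi'(y_1))$ from \eqref{le helffer sjostrand formula}, dividing by $|y_1|$ and integrating: the $f''$ term contributes $\int|f''|\,\dd x_1\int_{|y_1|\le2}\dd y_1=O(1/\eta_0)$, which is too big — so here one must be slightly more careful and use instead the improved estimate only for $|y_1|\le\eta_0$ where one exploits that $y_1\mapsto \Im m_N(z_1)y_1$ is monotone to upgrade \eqref{mmp}, combined with the $\Psi^2$-local law of Theorem~\ref{local} for $|y_1|\ge\eta_0$; the upshot, as already recorded in the derivation of \eqref{fw2}, is that the full-$\C$ integral of $|\pzz\tf|/|y_1|$ against these bounds is $O_\prec(\log N)$, and the $\chi'$ terms (supported on $|y_1|\in[1,2]$) contribute $O(\eta_0)$. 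Hence the replacement error is $O_\prec(|\lambda|(\log N)N^{-\tau})$.

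Finally, restricting the outer integral from $\C$ to $\Omega_1$ costs the integral over $\{|y_1|<N^{-\tau}\eta_0\}$ of $|\pzz\tf(z_1)|\cdot O_\prec(1/|y_1|)$; on this thin strip $\chi(y_1)=1$, $\chi'(y_1)=0$, so $\pzz\tf(z_1)=\tfrac{\ii}{2}y_1 f''(x_1)$, and dividing by $|y_1|$ kills the $y_1$ and leaves $\int_{|y_1|<N^{-\tau}\eta_0}\dd y_1\cdot\int|f''|\,\dd x_1 = O(N^{-\tau}\eta_0\cdot\eta_0^{-1})=O(N^{-\tau})$, so this contributes $O_\prec(|\lambda|N^{-\tau})$, which is absorbed into the stated error. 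Collecting the three error terms gives precisely \eqref{newphi}. I expect the main obstacle to be the bookkeeping in the second stage: the naive bound on $\int_\C|\pzz\tf|/|y|$ diverges like $1/\eta_0$, and obtaining the clean $\log N$ requires combining the two regimes $|y_1|\lessgtr\eta_0$ with the monotonicity trick behind \eqref{mmp} and the $\Psi^2$ local law — this is the same mechanism that already underlies \eqref{fw2}, so it can be invoked rather than re-derived, but it is the one place where the estimate is not purely mechanical.
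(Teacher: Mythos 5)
Your overall structure (truncate $e(\lambda)$ to $e_0(\lambda)$, then restrict the $z_1$--integral to $\Omega_1$) is the same as the paper's, and your derivation of the bound $|e(\lambda)-e_0(\lambda)|=O_\prec(|\lambda|N^{-\tau})$ is fine. However, the key step where the $\log N$ appears contains a genuine gap.

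You correctly note that the naive bound for the replacement error,
\[
|\lambda|N^{-\tau}\int_{\C}\frac{|\partial_{\bar z_1}\tilde f(z_1)|}{|y_1|}\,\dd^2 z_1 \sim \frac{|\lambda|N^{-\tau}}{\eta_0}\,,
\]
is too large. You then assert that combining ``the monotonicity trick'' on $|y_1|\le\eta_0$ with ``the $\Psi^2$-local law'' on $|y_1|\ge\eta_0$ improves this to $O_\prec(\log N)$, citing the derivation of \eqref{fw2} as having ``already recorded'' this. Neither part of this is right. The $\Psi^2$-local law gives $|m_N(z_1)-m_{fc}(z_1)|\prec (N|y_1|)^{-1}$, hence $|\Tr G(z_1)-\E\Tr G(z_1)|\prec 1/|y_1|$ — exactly the same $1/|y_1|$ bound you were already using, with no gain on $|y_1|\ge\eta_0$; inserting it into the $f''$-term still integrates to $O(1/\eta_0)$. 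And the derivation of \eqref{fw2} produces an $O_\prec(N^{-\tau})$ error (from discarding the thin strip $|y_1|<N^{-\tau}\eta_0$), not a $\log N$ estimate on the full-plane integral; these are different statements.

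The missing ingredient is an integration by parts in $x_1$ on the region $|y_1|\ge\eta_0$: writing
\[
\int y_1 f''(x_1)\chi(y_1)\big(\Tr G(z_1)-\E\Tr G(z_1)\big)\,\dd x_1
=-\int y_1 f'(x_1)\chi(y_1)\,\partial_{z_1}\big(\Tr G(z_1)-\E\Tr G(z_1)\big)\,\dd x_1
\]
and using $|\partial_{z_1}(\Tr G-\E\Tr G)|=|\Tr G^2-\E\Tr G^2|\prec 1/|y_1|^2$ (by Cauchy's integral formula from \eqref{mmp}) converts $\int|f''|\,\dd x_1\approx 1/\eta_0$ into $\int|f'|\,\dd x_1\cdot\int_{\eta_0}^{2}\dd y_1/|y_1|\approx\log N$. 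Together with the trivial bound $\int|f''|\,\dd x_1\int_{|y_1|\le\eta_0}\dd y_1=O(1)$ on the complementary region, this yields the genuine $O_\prec(\log N)$ needed. Equivalently (and perhaps more cleanly), one may first show $|\Tr f(H_N)-\E\Tr f(H_N)|\prec\log N$ by this integration-by-parts argument, then write $\phi'(\lambda)=\ii\E[(\Tr f-\E\Tr f)e_0(\lambda)]+\ii\E[(\Tr f-\E\Tr f)(e(\lambda)-e_0(\lambda))]$, bound the second term by $|\lambda|(\log N)N^{-\tau}$, and use \eqref{fw2} plus $|e_0(\lambda)|\le 1$ on the first. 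Your proposal needs one of these to close.
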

In view of~\eqref{newphi} we are led to study 
\begin{align}\label{le semi expression}
\E [ \ea (\Tr(G(z_1))-\E \Tr G(z_1))]
\end{align}
in the next subsection.

\subsection{Invariance of Haar measure: Ward identities}\label{subsection ward}

In this subsection, we study~\eqref{le semi expression} further. For simplicity, we recall the shorthands in (\ref{notation_E}) and (\ref{notation_X}). With these notations, we write~\eqref{le semi expression} as $\E[\ea \< \underline{G(z_1)}\>]$. We have the following estimate for~\eqref{le semi expression}.
\begin{lemma}\label{lemma_useful}
Let $z_1=E_1+\ii\eta_1\in D_{bulk}$. Then
 \begin{align}\label{sum1bis}
\E [\ea \<\Tr G(z_1)\>]=&\frac{ \ii \lambda}{ \pi} \E \bigg[ \ea  \int_{ \Omega_2} \frac{\partial }{\partial \overline{z_2}} \tf(z_2) \frac{\partial }{\partial {z_2}}K(z_1,z_2) \dd^2 z_2 \bigg]+O_{\prec}\Big( \frac{1}{N \eta_1^2 }\Big),
\end{align}
where the kernel $K(z_1,z_2)$ is given by
\begin{equation}\label{kernel_K_1}
K(z_1,z_2):= \frac{\omega_B'(z_1)}{m_{fc}(z_1)} (K_{B_1}(z_1,z_2)-K_{B_2}(z_1,z_2))+\frac{\omega_A'(z_1)}{m_{fc}(z_1)} (K_{A_1}(z_1,z_2)-K_{A_2}(z_1,z_2))\,,
\end{equation}
with\small
\begin{align}
K_{B,1}(z_1,z_2)&:=\frac{1}{N} \sum_{j=1}^N \frac{1}{a_j-\omega_B(z_1)}  (\tb G(z_2) G(z_1))_{jj}; \quad  K_{B,2}(z_1,z_2):=\frac{1}{N} \sum_{j=1}^N \frac{1}{a_j-\omega_B(z_1)} (G(z_2) \tb G(z_1))_{jj};\label{K_B_formula}\\
K_{A,1}(z_1,z_2):&=\frac{1}{N} \sum_{j=1}^N \frac{1}{b_j-\omega_A(z_1)}  (\ta \mathcal{G}(z_2) \mathcal{G}(z_1))_{jj}; \quad  K_{A,2}(z_1,z_2):=\frac{1}{N} \sum_{j=1}^N \frac{1}{b_j-\omega_A(z_1)} (\mathcal G(z_2) \ta \mathcal G(z_1))_{jj}.\label{K_A_formula}
\end{align}\normalsize
\end{lemma}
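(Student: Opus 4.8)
The plan is to start from the representation \eqref{newphi} of $\phi'(\lambda)$ and to compute the correlation $\E[\ea \<\underline{G(z_1)}\>]$ by exploiting the left-invariance of the Haar measure as in \eqref{le how to get wardmini}. First I would pick the test function $M(H_t)=e_0(\lambda)\,\Tr G(z_1)$ (with $H_t=A+U_tBU_t^*$, $U_t=\mathrm{e}^{\ii tX}U$) and differentiate $\E[M(H_t)]$ at $t=0$; since $\frac{\dd}{\dd t}\big|_{t=0}U_t=\ii X U$, this produces a commutator identity of the schematic form
\begin{align}\label{le plan ward}
0=\E\Big[\ea\,\Tr\big([X,\tb]\,\partial_{\tb}(\Tr G(z_1))\big)\Big]+\E\Big[\big(\partial_t e_0(\lambda)\big)\,\Tr G(z_1)\Big]\,,
\end{align}
where the second term, after using \eqref{e22} and $\partial_{\tb}\Tr G(z_2)$, generates the quadratic (in $\lambda$, after integrating back) contribution that will ultimately become the variance. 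I would make a judicious choice of the Hermitian matrix $X$ — typically $X=\ii(E_{jk}-E_{kj})$ and $X=E_{jj}$ type generators, or more efficiently $X$ built from the resolvent itself — so that $[X,\tb]$ reproduces terms like $\tb G G$ and $G\tb G$ after taking normalized traces. Running the same argument with $\mathcal H_t=U_t^*AU_t+B$ (differentiating $e_0(\lambda)\Tr\mathcal G(z_1)$) yields a companion identity; because $\underline{G}=\underline{\mathcal G}$ by \eqref{m_N_definition}, combining the two gives a $2\times 2$ linear system whose coefficient matrix is exactly the Jacobian of the subordination equations \eqref{le system for omega primes}.

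Next I would simplify this system using the local laws of Theorem~\ref{local}. The point is that all "diagonal'' resolvent entries $G_{ii}(z_1)$ can be replaced by $(a_i-\omega_B(z_1))^{-1}$ and $(\tb G(z_1))_{ii}$ by $(z_1-\omega_B(z_1))(a_i-\omega_B(z_1))^{-1}$, each at the cost of an error $O_\prec(\Psi(z_1))$ that, after being multiplied by bounded deterministic weights and summed as in \eqref{average}, improves to $O_\prec(\Psi^2(z_1))=O_\prec\big(\tfrac{1}{N\eta_1}\big)$; the remaining genuinely off-diagonal two-point structures are precisely the four quantities $K_{B,1},K_{B,2},K_{A,1},K_{A,2}$ in \eqref{K_B_formula}--\eqref{K_A_formula}. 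Using the stability estimates \eqref{imaginary_bound}, \eqref{delta_bound} from Lemma~\ref{bound_bulk} to invert the Jacobian, and the identities \eqref{self_m_fc}--\eqref{LMprime} for the subordination functions, the $2\times2$ system solves for $\E[\ea\<\underline{G(z_1)}\>]$ in terms of $\E[\ea \int_{\Omega_2}\partial_{\overline{z_2}}\tf(z_2)\,\partial_{z_2}K(z_1,z_2)\,\dd^2z_2]$ with the explicit prefactors $\omega_B'(z_1)/m_{fc}(z_1)$ and $\omega_A'(z_1)/m_{fc}(z_1)$ appearing in \eqref{kernel_K_1}; here I would also use the resolvent identity \eqref{le rock 4} to recognize that differentiating a one-point function in $z_2$ reproduces exactly the $K_{\bullet,1}$-type terms. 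The bound $O_\prec\big(\tfrac{1}{N\eta_1^2}\big)$ in \eqref{sum1bis} comes from tracking the worst error, namely an $O_\prec(\Psi^2(z_1))$ term that has been divided by one more power of $\Im z_1$ (coming from the factor $\Tr G(z_1)$ or from stability) — or, depending on the organization, from a $\Psi(z_2)$-type error in the $z_2$-integral that contributes $O_\prec\big(\tfrac{1}{N\eta_1\eta_2}\big)$ before integration; after integrating $\partial_{\overline{z_2}}\tf$ over $\Omega_2$ using \eqref{assumpf} and $|\Im z_2|\ge\tfrac12 N^{-\tau}\eta_0$ this is absorbed into $O_\prec\big(\tfrac{1}{N\eta_1^2}\big)$.

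\textbf{Main obstacle.} The hard part will be the bookkeeping in the Ward-identity step: choosing $X$ so that the commutator $[X,\tb]$ cleanly produces the four two-point functions and nothing worse, and then verifying that every term dropped along the way is controlled by the local laws — in particular the terms arising from $\partial_t e_0(\lambda)$, which involve $\Tr(G(z_2)[X,\tb]G(z_2))$ and must be shown not to generate uncontrolled higher-order fluctuation terms. A subtlety is that $\ea$ is itself random and depends on $U$, so when differentiating one must be careful that the "expectation is constant in $t$'' argument is applied to the full random variable $e_0(\lambda)\Tr G(z_1)$ before any approximation; the replacement of $e(\lambda)$ by $\ea$ (justified by \eqref{e2}) and the restriction to $\Omega_1,\Omega_2$ must be done consistently so that the $O_\prec$ errors do not interact badly with the factor $\lambda$. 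Once the identity \eqref{le rock 2}/\eqref{sum1bis} is in place, the remaining work — analyzing $K_{B,2}$ and $K_{A,2}$, which cannot be reduced to one-point functions by \eqref{le rock 4} — is deferred to Section~\ref{sec_proof_main_lemma} and is not needed for the present lemma.
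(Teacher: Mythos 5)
Your proposal identifies the right high-level tools (left-invariance Ward identities, the $2\times 2$ subordination system and its Jacobian $\Delta$, local laws to absorb errors), but the starting choice of observable in the Ward identity is off in a way that breaks the whole derivation.

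You propose $M(H_t)=e_0(\lambda,t)\,\Tr G_t(z_1)$ and obtain the schematic identity $0=\E[\ea\,\Tr([X,\tb]\,\partial_{\tb}\Tr G(z_1))]+\E[(\partial_t e_0)\,\Tr G(z_1)]$. The second term is a genuine three-fold covariance: it couples $\ea$, the $z_2$-quantities coming from $\partial_t e_0$, \emph{and} $\Tr G(z_1)$ as a separate multiplicative factor of order $N$. This is no simpler than the object $\E[\ea\langle\Tr G(z_1)\rangle]$ you set out to compute, and there is no mechanism by which this product of one-point traces becomes the two-point kernel $K(z_1,z_2)$ of the lemma. The paper instead takes $M(H_t)=e_0(\lambda,t)\,(G_t(z_1))_{ij}$ — individual matrix entries, then averaged over $i$. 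This is crucial: with $X=\ei\ej^*+\ej\ei^*$ (and its anti-Hermitian partner), the factor $\Tr(G(z_2)[X,\tb]G(z_2))$ from $\partial_t e_0$ produces entries of the form $(\tb G(z_2)^2)_{ji}-(G(z_2)^2\tb)_{ji}$; multiplying by $(G(z_1))_{ij}$ and summing over $i$ \emph{contracts} these indices to form the single quantities $(\tb G(z_2)^2 G(z_1))_{jj}$ and $(G(z_2)^2\tb G(z_1))_{jj}$, which — after recognizing $\partial_{z_2}$ — give exactly the integrand of $I_j(z_1)$ and eventually the kernels $K_{B,1}$, $K_{B,2}$ in \eqref{K_B_formula}. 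No such contraction occurs in your version because $\Tr G(z_1)$ has no free indices. (The suggestion of using "$X$ built from the resolvent itself'' is also not available: $X$ must be deterministic for $\mathrm{e}^{\ii tX}U$ to remain Haar distributed.)

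Your outline is also missing two intermediate steps that are needed to close the argument: (i) the secondary Ward identity obtained from $M(H_t)=(G_t(z_1))_{ij}$ alone, which gives $\E[\underline{\tb G}\,G_{jj}-\underline{G}\,(\tb G)_{jj}]=0$ and allows rewriting the left-hand side of the main Ward identity in terms of centered quantities; and (ii) the cancellation, after weighting by $(a_j-\omega_B(z_1))^{-1}$ and summing over $j$, of the would-be leading terms via the subordination relation $m_A(\omega_B(z_1))=m_{fc}(z_1)$ — only after this cancellation does the equation become one that can be solved for $\E[\ea\langle\Tr G\rangle]$ jointly with the $\mathcal G$ version. Finally, the $O_\prec(1/(N\eta_1^2))$ error in \eqref{sum1bis} arises from the product of two $z_1$-fluctuations — $(\underline{\tb G}-(z_1-\omega_B)m_{fc})=O_\prec(\Psi^2(z_1))$ times an $O_\prec(N\Psi^2(z_1))$ error from replacing $G_{jj}$ by $(a_j-\omega_B(z_1))^{-1}$ in the weighted sum \eqref{average} — not from the $z_2$-integral as your second alternative suggests.
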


\begin{proof} The left side of~\eqref{sum1bis} involves the full expectation with respect the Haar measure on $U(N)$. This suggest to make use of its left invariance: Let $X=X^*$ be any deterministic $N$ by $N$ matrix and let $t\in\R$, then $\mathrm{e}^{\i t X}$ belongs to $U(N)$ and $U_t:= \mathrm{e}^{\i t X}U$ is also Haar distributed as $U$ is by assumption.

Let $M\,:\,\C^{N \times N}\rightarrow \C$ be a differentiable map and introduce $H_t:= A+U_tBU_t^*$. Then by the above we must have that $\E [M(H_t)]$ is constant in $t$ and hence 
\begin{align}\label{le how to get ward}
 \frac{{\rm d}}{{\rm d} t} \bigg|_{t=0}\E [M(H_t)]=0\,.
\end{align}
 
 In view of~\eqref{le semi expression}, we will first choose 
 \begin{equation}\label{function_invariant}
M(H_t)=e_0(\lambda,t) \big( G_t(z_1)\big)_{ij}=\exp \Big\{ \frac{\i \lambda}{\pi} \int_{\Omega_2}\frac{\partial }{\partial \overline{z_2}} \tf(z_2) \Big( \Tr G_t(z_2)-\E \Tr G_t(z_2)\Big) \dd^2 z_2  \Big\} \big(G_t(z_1)\big)_{ij},
\end{equation} 
where $G_t:=(H_t-zI)^{-1}$ and $e_0(\lambda,t)$ is given by \eqref{e22} with $G$ replaced by $G_t$. Using
\begin{equation}\label{derivative}
\frac{{\rm d}}{{\rm d} t}\bigg|_{t=0} H_t= \frac{{\rm d}}{{\rm d} t}\bigg|_{t=0} \ U_t B U_t^*=\i [X, \tb],\qquad \frac{{\rm d}}{{\rm d} t}\bigg|_{t=0} G_t=-\i G [X, \tb] G,
\end{equation}
we obtain from~\eqref{le how to get ward} with the choice of $M$ in~\eqref{function_invariant} the relation
$$\E\Big[ \Big( \ea \frac{\i \lambda}{\pi} \int_{\Omega_2} \frac{\partial }{\partial \overline{z_2}} \tf(z_2) \sum_{l=1}^N \Big( -\i G(z_2) [X, \tb] G(z_2)\Big)_{ll} \dd^2 z_2 \Big) (G(z_1))_{ij}\Big] =\E \Big[\ea \Big( \i  G(z_1) [X, \tb] G(z_1)  \Big)_{ij}\Big],$$
where $X$ is an arbitrary deterministic self-adjoint matrix.

Let now first $X=\ei \ej^*+\ej \ei^*$ and then $X=\i \ei \ej^* -\i \ej \ei^*$. Using linearity and averaging over the index $i$, we obtain, for fixed $j$ and $z_1\in D_{bulk}$, the following identity
\begin{equation}\label{equation_2}
\E \Big[  \ea \Big(\underline{\tb G} G_{jj}-\g (\tb G)_{jj} \Big) \Big]=I_j(z_1)\,,
\end{equation}
with 
\begin{align}\label{le Ij}
 I_j(z_1):=&\frac{1}{N}\E \Big[ \ea \frac{\i \lambda}{\pi} \int_{\Omega_2} \frac{\partial }{\partial \overline{z_2}} \tf(z_2) \frac{\partial }{\partial {z_2}} \Big( (\tb F G)_{jj} -(F \tb G)_{jj}\Big) \dd^2 z_2  \Big]\,
\end{align}
where we further introduced the short hands
\begin{align}\label{le shorthands}
F \equiv G(z_2)\,,\qquad G \equiv G(z_1)\,.
\end{align}

We first work on the left side of~\eqref{equation_2}. Repeating the above invariance argument with $M(H_t)=(G_t(z_1))_{ij}$, we obtain after averaging over the index $i$ the identity
\begin{align}\label{le another ward}
 \E \Big(\underline{\tb G} G_{jj}-\g (\tb G)_{jj} \Big)=0\,,
\end{align}
and we can write
\begin{align}\label{le table}
 \E \Big[  \ea \Big(\underline{\tb G} G_{jj}-\g (\tb G)_{jj} \Big) \Big]&=\E \Big[  \ea\Big( \<\underline{\tb G} G_{jj}\>-\<\g (\tb G)_{jj} \>\Big) \Big]\,.
\end{align}

Next, recalling the definition of the resolvent in~\eqref{tilde quantities}, we write $(a_j-z_1)G_{jj}+(\wt{B}G)_{jj}=1$, which implies
\begin{equation}\label{resolvent_def}
\E [\ea \<\g(\tb G)_{jj}\>]=(z_1-a_j) \E [\ea \<\g G_{jj}\>]+\E [\ea \<\g\>]\,.
\end{equation}
We then rewrite~\eqref{le table} as
\begin{align}\label{le table 2}
 \E \Big[  \ea \Big(\underline{\tb G} G_{jj}-\g (\tb G)_{jj} \Big) \Big]&=\E \Big[  \ea\Big( \<\underline{\tb G} G_{jj}\>+(a_j-z_1)\<\g  G_{jj} \>-\<\g\>\Big) \Big]\,.
\end{align}
Next, in view of the local laws Theorem~\ref{local}, we write the right side of~\eqref{le table 2} as
\begin{align}
\E \Big[  \ea\Big(& \<\underline{\tb G} G_{jj}\>+(a_j-z_1)\<\g  G_{jj} \>-\<\g\>\Big) \Big]\nonumber\\
&=(z_1-\omega_B(z_1))m_{fc}(z_1)\E \big[  \ea \<G_{jj}\>\big]+(a_j-z_1)m_{fc}(z_1)\E\big[\ea\<  G_{jj} \> \big]\nonumber\\
&\qquad+\E \Big[  \ea\Big( \<\big(\underline{\tb G}-(z_1-\omega_B(z_1))m_{fc}(z_1)\big) G_{jj}\> \Big]\nonumber\\
&\qquad+(a_j-z_1)\E \Big[  \ea\<\big(\g-m_{fc}(z_1))  G_{jj} \>\Big) \Big]-\E [\ea \<\g\>]\nonumber\\
&=(a_j-\omega_B(z_1))m_{fc}(z_1)\E \big[  \ea \<G_{jj}\>\big]-\E [\ea \<\g\>]\nonumber\\
&\qquad+\E \Big[  \ea \<\big(\underline{\tb G}-(z_1-\omega_B(z_1))m_{fc}(z_1)\big) G_{jj}\> \Big]\nonumber\\
&\qquad+(a_j-z_1)\E \Big[  \ea\<\big(\g-m_{fc}(z_1))  G_{jj} \>\Big]\,.
\end{align}
Returning to~\eqref{equation_2}, we hence obtain, after rearranging,
\begin{align}
 &(a_j-\omega_B(z_1))m_{fc}(z_1)\E \big[  \ea \<G_{jj}\>\big]-\E [\ea \<\g\>] \nonumber\\
 &\qquad\qquad=I_j(z_1)-\E \Big[  \ea \<\big(\underline{\tb G}-(z_1-\omega_B(z_1))m_{fc}(z_1)\big) G_{jj}\> \Big]\nonumber\\
&\qquad\qquad\qquad\qquad-(a_j-z_1)\E \Big[  \ea\<\big(\g-m_{fc}(z_1))  G_{jj} \> \Big]\,.
\end{align}
Dividing by~$(a_j-\omega_B(z_1))$ and then summing over the index $j$, the left side of the above equation vanishes by (\ref{self_m_fc}), and we thus obtain
\begin{align}
\sum_{j=1}^N\frac{I_j(z_1)}{a_j-\omega_B(z_1)}&=\E \Big[  \ea \<\big(\underline{\tb G}-(z_1-\omega_B(z_1))m_{fc}(z_1)\big)\sum_{j=1}^N\frac{1}{a_j-\omega_B(z_1)} G_{jj}\> \Big]\nonumber\\
&\qquad+\E \Big[  \ea\<\big(\g-m_{fc}(z_1))\sum_{j=1}^N\frac{a_j-z_1}{a_j-\omega_B(z_1)}  G_{jj} \> \Big]\,.
\end{align}
At this point, we invoke the local laws in~Theorem~\ref{local}, to get the estimate
\begin{align}
\sum_{j=1}^N\frac{I_j(z_1)}{a_j-\omega_B(z_1)}&=\E \Big[  \ea \<\big(\underline{\tb G}-(z_1-\omega_B(z_1))m_{fc}(z_1)\big)\sum_{j=1}^N\frac{1}{(a_j-\omega_B(z_1))^2} \> \Big]\nonumber\\
&\qquad+\E \Big[  \ea\<\big(\g-m_{fc}(z_1))\sum_{j=1}^N\frac{a_j-z_1}{(a_j-\omega_B(z_1))^2}  \>\Big]+O_\prec\big(\frac{1}{N\eta_1^2}\big)\nonumber\\
&=\frac{1}{N}\sum_{j=1}^N\frac{1}{(a_j-\omega_B(z_1))^2}\E \Big[  \ea \< \Tr({\tb G}) \> \Big]\nonumber\\
&\qquad+\frac{1}{N}\sum_{j=1}^N\frac{a_j-z_1}{(a_j-\omega_B(z_1))^2} \E \Big[  \ea\<\Tr G \> \Big]+O_\prec\big(\frac{1}{N\eta_1^2}\big)\,,
\end{align}
where the second equality follows from the property that $\<\mathcal{X}\>=0$ if $\mathcal{X}$ is deterministic. Next, note that
\begin{align}
 \frac{1}{N}\sum_{j=1}^N\frac{a_j-z_1}{(a_j-\omega_B(z_1))^2}&=\frac{1}{N}\sum_{j=1}^N\frac{a_j-\omega_B(z_1)}{(a_j-\omega_B(z_1))^2}+\frac{1}{N}\sum_{j=1}^N\frac{\omega_B(z_1)-z_1}{(a_j-\omega_B(z_1))^2}\nonumber\\
 &=m_{fc}(z_1)+m_A'(\omega_B(z_1))(\omega_B(z_1)-z_1)\,.
\end{align}
Hence we obtain
\begin{align}\label{sum11}
 &\big( m_{fc}(z_1)-(z_1-\omega_B(z_1))m_A'(\omega_B(z_1)) \big)\E \big[  \ea \<\Tr G\>\big]\nonumber\\
 &=\sum_{j=1}^N\frac{I_j(z_1)}{a_j-\omega_B(z_1)}-m_A'(\omega_B(z_1))\E \Big[  \ea \< \Tr({\tb G}) \> \Big]+O_\prec\big(\frac{1}{N\eta_1^2}\big)\,.
\end{align}

Next, we treat $\tg$ in (\ref{tilde quantities}) similarly and obtain
\begin{align}\label{sum22}
 &\big( m_{fc}(z_1)-(z_1-\omega_A(z_1))m_B'(\omega_A(z_1)) \big)\E \big[  \ea \<\Tr \mathcal{G}\>\big]\nonumber\\
 &=\sum_{j=1}^N\frac{\mathcal{I}_j(z_1)}{b_j-\omega_A(z_1)}-m_B'(\omega_A(z_1))\E \Big[  \ea \< \Tr({\ta \mathcal{G}}) \> \Big]+O_\prec\big(\frac{1}{N\eta_1^2}\big)\,,
\end{align}
where we wrote $\mathcal{F} \equiv \mathcal{G}(z_2)$, $\tg \equiv \tg(z_1)$ for short and introduced
\begin{align}\label{le Ij tilde}
 \mathcal{ I}_j(z_1):= \frac{1}{N}\E \Big[ \ea \frac{\ii \lambda}{ \pi} \int_{\Omega_2}  \frac{\partial }{\partial \overline{z_2}} \tf(z_2) \frac{\partial }{\partial {z_2}} \Big( (\ta \mathcal{F} \tg)_{jj} -(\mathcal{F} \ta \tg)_{jj} \Big) \dd^2 z_2  \Big].
\end{align}

Combining (\ref{sum11}) and (\ref{sum22}) with the definition of the resolvent~\eqref{tilde quantities} and the subordination equations (\ref{self_m_fc}), we obtain
\begin{align}
 m^3_{fc}(z_1) \Delta(z_1) \E [\ea \<\Tr G\>]  =m'_B(\omega_A(z_1)) \sum_{j=1}^N\frac{I_j(z_1)}{a_j-\omega_B(z_1)}  +m'_A(\omega_B(z_1)) \sum_{j=1}^N\frac{\mathcal{I}_j(z_1)}{b_j-\omega_A(z_1)} +O_{\prec} \Big( \frac{1}{N \eta_1^2} \Big),\label{sum33}
\end{align}
with $\Delta(z)$ given in~(\ref{delta}). 
Recall that we have $z_1\in D_{bulk}$, hence by (\ref{imaginary_bound}), (\ref{delta_bound}) and (\ref{differential_relation}), we can divide (\ref{sum33}) by $m^3_{fc}(z_1) \Delta(z_1)$ to obtain
\begin{align}\label{sum1}
\E [\ea \<\Tr G\>]=&\frac{\omega_B'(z_1)}{m_{fc}(z_1)}\sum_{j=1}^N\frac{I_j(z_1)}{a_j-\omega_B(z_1)}  +\frac{\omega_A'(z_1)}{m_{fc}(z_1)} \sum_{j=1}^N\frac{\mathcal{I}_j(z_1)}{b_j-\omega_A(z_1)}  +O_{\prec}\Big( \frac{1}{N \eta_1^2  }\Big)\nonumber\\
=&\frac{ \ii \lambda}{ \pi} \E \bigg[ \ea  \int_{ \Omega_2} \frac{\partial }{\partial \overline{z_2}} \tf(z_2) \frac{\partial }{\partial {z_2}}K(z_1,z_2) \dd^2 z_2 \bigg]+O_{\prec}\Big( \frac{1}{N \eta_1^2 }\Big),
\end{align}
where $K(z_1,z_2)$ is given by
\begin{equation}\label{kernel_K_1}
K(z_1,z_2):= \frac{\omega_B'(z_1)}{m_{fc}(z_1)} (K_{B,1}-K_{B,2})+\frac{\omega_A'(z_1)}{m_{fc}(z_1)} (K_{A,1}-K_{A,2}),
\end{equation}
with
$$K_{B,1}:=\frac{1}{N} \sum_{j=1}^N \frac{1}{a_j-\omega_B(z_1)}  (\tb F G)_{jj}; \quad  K_{B,2}:=\frac{1}{N} \sum_{j=1}^N \frac{1}{a_j-\omega_B(z_1)} (F \tb G)_{jj};$$
$$K_{A,1}:=\frac{1}{N} \sum_{j=1}^N \frac{1}{b_j-\omega_A(z_1)}  (\ta \mathcal{F} \mathcal{G})_{jj}; \quad  K_{A,2}:=\frac{1}{N} \sum_{j=1}^N \frac{1}{b_j-\omega_A(z_1)} (\mathcal F \ta \mathcal G)_{jj}.$$
This completes the proof of Lemma~\ref{lemma_useful}.
\end{proof}

\subsection{Proof of Proposition~\ref{prop}}

 The two terms $K_{A,1}$ and $K_{B,1}$ are easy to identify: Using the resolvent identity~\eqref{le rock 4}, we have
\begin{align*}
K_{B,1}&=\frac{1}{z_1-z_2}\frac{1}{N}\sum_{j=1}^N \frac{1}{a_j-\omega_B(z_1)} \Big( (\tb G)_{jj}(z_1)-(\tb G)_{jj}(z_2) \Big).
\end{align*}
Recall the local law (\ref{average}) and choose $d_j(z)=\frac{1}{a_j-\omega_B(z)}$. The uniform bound of $(d_j(z))_j$ follows from (\ref{imaginary_bound}). Though $(d_j(z))_j$ depends on $z$,  we can use a continuity argument to show that the local law still holds, i.e.,
\begin{equation}\label{argument_continuity}
\Big| \frac{1}{N}\sum_{j=1}^N \frac{1}{a_j-\omega_B(z_1)} \Big( (\tb G(z_l))_{jj} -\frac{z_l-\omega_B(z_l)}{a_j-\omega_B(z_l)} \Big) \Big| \prec \frac{1}{N |\eta_l|}, \qquad l=1,2.
\end{equation}
For notational simplicity, we define
$$h_j(z):=(\tb G(z))_{jj} -\frac{z-\omega_B(z)}{a_j-\omega_B(z)}.$$
Next, we consider two cases to show that
\begin{equation}\label{argument_1}
\Big|\frac{1}{z_1-z_2}\frac{1}{N}\sum_{j=1}^N \Big( d_j(z_1) (h_j(z_1)-h_j(z_2)) \Big)\Big| \prec \frac{1}{N \eta^2_1}+\frac{1}{N |\eta_1 \eta_2|}.
\end{equation}

	\noindent \textbf{Case 1:} If $z_1$ and $z_2$ are in different half-planes, then $\frac{1}{|z_1-z_2|} \leq \frac{1}{| {\Im}z_1 |}.$ Thus (\ref{argument_1}) follows directly from (\ref{argument_continuity}).
	
	\noindent \textbf{Case 2:} If $z_1$ and $z_2$ are in the same half-plane, without loss of generality, we can assume they both belong to the upper half plane. If $|{\Im}z_1-{\Im}z_2| \geq \frac{1}{2} {\Im}z_1$, then we can use the same argument as in Case~1. Thus it is sufficient to consider $|{\Im}z_1-{\Im}z_2| \leq \frac{1}{2}{\Im}z_1$, which means $ \frac{2}{3} {\Im}z_2 \leq {\Im}z_1 \leq 2 {\Im}z_2$. The left side of (\ref{argument_1}) can be bounded as
	\begin{align*}
	\Big|  \frac{1}{N}\sum_{j=1}^N  \frac{d_j(z_1)}{z_1 -z_2}  (h_j(z_1)-h_j(z_2)) \Big| & \leq \Big| \frac{1}{N}\sum_{j=1}^N \frac{d_j(z_1)-d_j(z_2)}{z_1-z_2} h_j(z_2)  \Big|\nonumber\\ &\qquad\qquad +\Big| \frac{\frac{1}{N}\sum_{j=1}^N d_j(z_1) h_j(z_1)-\frac{1}{N}\sum_{j=1}^N d_i(z_2) h_j(z_2)) }{z_1-z_2} \Big|.
	\end{align*}
	The coefficients of the first term on the right side have the following upper bound
	$$\Big| \frac{d_j(z_1)-d_j(z_2)}{z_1-z_2} \Big| = |d_j(z_1) || d_{j}(z_2) | \Big| \frac{\omega_{B}(z_1)-\omega_{B}(z_2)}{z_1-z_2} \Big|\leq C.$$
	The last step follows from the fact that $\omega_B(z)$ is analytic in the neighborhood of the segment connecting $z_1$ and $z_2$ and (\ref{omega_prime_bound}). Using the arguments in proving (\ref{argument_continuity}), one shows from the local law (\ref{average}) that the first term is bounded by $O_{\prec}\Big(\frac{1}{N |\eta_2|}\Big).$ The second term is bounded by $O_{\prec}\Big(\frac{1}{N \eta^2_1}+\frac{1}{N |\eta_1\eta_2|}\Big)$ from (\ref{argument_continuity}) using the Cauchy integral formula. Thus the error term has the same upper bound as in Case 1.

	Therefore, by direct computation, we obtain that
\begin{align}
K_{B,1}=&\frac{(z_1-\omega_B(z_1)) L_B(z_1,z_1) -(z_2-\omega_B(z_2)) L_B(z_1,z_2)}{z_1-z_2}+O_{\prec}\Big( \frac{1}{N \eta^2_1}\Big)+O_{\prec}\Big( \frac{1}{N |\eta_1 \eta_2|}\Big), \label{right_first_a}
\end{align}
and similarly,
\begin{align}
K_{A,1}=&\frac{(z_1-\omega_A(z_1)) L_A(z_1,z_1) -(z_2-\omega_A(z_2)) L_A(z_1,z_2)}{z_1-z_2}+O_{\prec}\Big( \frac{1}{N \eta^2_1}\Big)+O_{\prec}\Big( \frac{1}{N |\eta_1 \eta_2|}\Big),\label{right_first_b}
\end{align}
with $L_A(z_1,z_2)$ and $L_B(z_1,z_2)$ given in (\ref{LM_def}).

Next, we estimate the rest two terms $K_{B,2}$ and $K_{A,2}$. Note that the Gromov-Milman concentration inequality (see, e.g., \cite{concentrate, stability}) is not sufficiently strong to obtain the optimal mesoscopic CLT in the regular bulk. We will use the random partial decomposition used in \cite{eta1} to prove the following lemma in the next subsection.

\begin{proposition}\label{variance_bfbg}
Under the same assumptions as in  Proposition \ref{prop}, there exists a small neighborhood of $E_0$, denoted by $D_0$, such that for all  $z_{1}=E_1+\ii \eta_1, z_2=E_2+\ii \eta_2 \in D_0 \cap D_{bulk}$, we have the following estimate for every $j$:
\begin{equation}\label{fbg_ii}
 (G(z_2) \tb G(z_1))_{jj} = \frac{1}{(z_1-z_2)  (a_j-\omega_B(z_1))(a_j-\omega_B(z_2))} \frac{T_B(z_1,z_2)}{L_B(z_1,z_2)}+E_{B,j}(z_1,z_2),
\end{equation}
for sufficiently large $N$, where $L_{B}(z_1,z_2)$ is given in (\ref{LM_def}), and 
\begin{equation}\label{T_B}
T_{B}(z_1,z_2):=(z_1-\omega_{B}(z_1))m_{fc}(z_1)- (z_2-\omega_{B}(z_2))m_{fc}(z_2),
\end{equation}
and the error function $E_{B,j}(z_1,z_2)$ is analytic in $z_1, z_2 \in \C \setminus \R$ with the following estimate:
\begin{equation}\label{bound_notation}
E_{B,j}(z_1,z_2)= O_{\prec}\Big(\frac{1}{\sqrt{N |\eta_1|} |\eta_2|} +\frac{1}{\sqrt{N |\eta_2|} |\eta_1|}+\frac{1}{N |\eta_1\eta_2|}+\frac{1}{N \eta_1^2}\Big).
\end{equation}
In addition, if $z_1$ and $z_2$ belong to different half planes, that is, for all $z_1, \overline{z_2} \in D_{bulk}$, (\ref{fbg_ii}) also holds true. 

The same holds true for $(\mathcal{G}(z_2) \ta \mathcal{G}(z_1))_{jj}$ with $\tg$ in (\ref{tilde quantities}) by interchanging the roles of $A$ and $B$.
\end{proposition}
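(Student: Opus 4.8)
The plan is to prove the representation~\eqref{fbg_ii} by a partial randomness decomposition of the Haar measure, following the strategy developed in~\cite{eta1} for one-point functions. Fix the index $j$. Using~\eqref{le first decomposition} we write $U=-\mathrm{e}^{\ii\theta_j}R_jU^{\langle j\rangle}$, with $\bm v_j$ uniform on the complex unit $(N-1)$-sphere and $R_j$ the associated Householder reflection, and we decompose $\widetilde B=UBU^*$ and $G(z)=(A+\widetilde B-zI)^{-1}$ accordingly. The argument splits into two parts. First I would establish a concentration estimate: for fixed $j$, the quantity $(G(z_2)\widetilde B G(z_1))_{jj}$ concentrates around its partial expectation $\E_{\bm v_j}[(G(z_2)\widetilde B G(z_1))_{jj}]:=\E[(G(z_2)\widetilde B G(z_1))_{jj}\mid U^{\langle j\rangle}]$ with a fluctuation of order $O_\prec(\Psi(z_1)\Psi(z_2))$, which is admissible for~\eqref{bound_notation}. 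This uses that only the $O(N)$ entries of $\bm v_j$ are averaged out, so the bound is stronger than what Gromov--Milman with respect to the full Haar measure gives; it parallels Lemmas~\ref{remove} and~\ref{concentrate_2}. Second, and this is the crux, I would identify the partial expectation.

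For the identification step one uses the decomposition together with concentration in $\bm v_j$ to reduce the diagonal entry to $\E_{\bm v_j}(G(z_2)\widetilde B G(z_1))_{jj}\approx-\E_{\bm v_j}[\mathrm{e}^{\ii\theta_j}\bm v_j^*\widetilde B^{\langle j\rangle}G(z_2)\widetilde B G(z_1)\bm e_j]$, cf.~\eqref{le rock 5}. This motivates introducing the two-point quantities $S_j^{[2]}(z_1,z_2)$, $T_j^{[2]}(z_1,z_2)$ of~\eqref{le S2t} and the one-point quantities $S_j^{[1]}$, $T_j^{[1]}$ of~\eqref{le S1t}. Approximating $\mathrm{e}^{-\ii\theta_j}\bm v_j$ by a Gaussian vector and integrating by parts in $\E_{\bm v_j}S_j^{[\sharp]}$ and $\E_{\bm v_j}T_j^{[\sharp]}$ for $\sharp=1,2$ produces a \emph{closed} linear system: a $2\times 2$ system for $(\E_{\bm v_j}S_j^{[2]},\E_{\bm v_j}T_j^{[2]})$ whose coefficients are, up to admissible errors, deterministic subordination quantities, and whose inhomogeneous part involves $S_j^{[1]}$, $T_j^{[1]}$ together with $\langle\underline{\widetilde B G}\rangle$-type terms. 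The one-point functions are controlled by the one-point local laws and the Ward identity of Lemma~\ref{mathcal_Y} from~\cite{eta1}, so no higher-order correlation functions enter and the system closes at this level.

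Solving the system amounts to inverting a $2\times 2$ matrix whose determinant is, to leading order, bounded away from zero on the bulk domain by Lemma~\ref{bound_bulk} and its two-point analogue (it is, up to normalization, the two-parameter Jacobian of the subordination equations, closely related to $\Delta(z_1,z_2)$ from~\eqref{delta2}); this is what makes the inversion stable. Propagating the leading term through the inversion and simplifying via the identities~\eqref{LM} and~\eqref{system2} and the trace local law $\frac1N\Tr(\widetilde B G(z))\approx(z-\omega_B(z))m_{fc}(z)$ yields the main term $\frac{1}{(z_1-z_2)(a_j-\omega_B(z_1))(a_j-\omega_B(z_2))}\frac{T_B(z_1,z_2)}{L_B(z_1,z_2)}$. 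To reach the sharp error~\eqref{bound_notation} on \emph{all} mesoscopic scales $\eta_0\gg N^{-1}$ one cannot afford crude estimates; here the hypothesis $|m'_{\mu_\alpha\boxplus\mu_\beta}(E_0+\ii 0)|>c$ is used to keep $m_{fc}$ locally injective and the relevant derivatives of the subordination functions away from zero in a neighborhood $D_0$ of $E_0$, which controls the errors generated by the Gaussian approximation and the integration by parts. Analyticity of $E_{B,j}$ in $z_1,z_2\in\C\setminus\R$ follows since both sides of~\eqref{fbg_ii} are analytic there, and the case $z_1,\overline{z_2}\in D_{bulk}$ is handled by exactly the same computation. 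The statement for $(\mathcal G(z_2)\widetilde A\mathcal G(z_1))_{jj}$ is obtained by working with $\mathcal H=U^*AU+B$ and interchanging the roles of $A$ and $B$.

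The main obstacle is the second part: deriving the self-consistent $2\times 2$ system for the two-point functions, closing it without picking up uncontrollable higher-order terms, and bounding all error contributions uniformly down to $\eta_0\gg N^{-1}$. Since the Gromov--Milman route only reaches $\eta_0\gg N^{-1/2}$, the entire gain must come from exploiting the partial-randomness structure of~\eqref{le first decomposition} and a careful accounting of the Gaussian-approximation errors, which is where the non-degeneracy condition on $m'_{\mu_\alpha\boxplus\mu_\beta}$ enters decisively.
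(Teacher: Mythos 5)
Your proposal follows essentially the same approach as the paper's proof in Section~\ref{sec_proof_main_lemma}: partial-randomness decomposition of the Haar measure, concentration of the diagonal entry around its partial expectation (Lemmas~\ref{remove} and~\ref{concentrate_2}), a closed system for $S_j^{[2]}$, $T_j^{[2]}$ obtained by Gaussian integration by parts and controlled via the one-point local laws together with the Ward identity of Lemma~\ref{mathcal_Y}, and finally inversion of $L_B(z_1,z_2)$, non-degenerate thanks to the hypothesis $|m'_{\mu_\alpha\boxplus\mu_\beta}(E_0+\ii 0)|>c$. Two small details differ from the paper's execution: the concentration rate actually achieved is $\Xi_1$ rather than $\Psi(z_1)\Psi(z_2)$, and the resolution of the system proceeds not by inverting a $2\times 2$ matrix with determinant $\sim\Delta(z_1,z_2)$ but by taking a linear combination of the two equations that isolates the small quantity $\mathcal{Y}=\underline{\widetilde B F}-(\underline{\widetilde B F})^2+\underline{\widetilde B F\widetilde B}\cdot\underline{F}$, averaging over $j$ to pin down $\underline{\widetilde B F\widetilde B G}$, and then dividing by $L_B(z_1,z_2)$.
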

Recalling (\ref{sum1bis}) and applying Stokes' formula, we obtain
\begin{align*}
\E [\ea \<\Tr G(z_1)\>]=&\frac{ \lambda}{2 \pi} \E \Big[ \ea  \int_{ \partial \Omega_2}  \tf(z_2) \frac{\partial}{\partial z_2} K(z_1,z_2) \dd z_2 \Big]+O_{\prec}\Big( \frac{1}{N \eta_1^2 }\Big)\,.
\end{align*}
We further apply the above equation to (\ref{newphi}), using Stokes' formula and (\ref{assumpf}), we have
\begin{equation}\label{plug_to}
\phi'(\lambda)=\frac{\lambda}{4\pi^2}\E \Big[ \ea \int_{\partial \Omega_1}  \tf(z_1)  \int_{ \partial \Omega_2}  \tf(z_2) \frac{\partial}{\partial z_2} K(z_1,z_2) \dd z_2 \dd z_1\Big]+O_{\prec}\Big( \frac{N^{2\tau}}{N \eta_0 }\Big)+O_{\prec}\Big( |\lambda| (\log N) N^{ -\tau} \Big).
\end{equation}
If $z_1 \in \partial \Omega_1$ and $z_2 \in \partial \Omega_2$ and they are in the same half plane, then by (\ref{fn}), $z_1,z_2 \in D_0 \cap D_{bulk}$ or $\overline{z_1}, \overline{z_2} \in D_0 \cap D_{bulk}$ (see Proposition \ref{variance_bfbg}) for $N$ sufficiently large. Combining with the fact that $|a_j-\omega_B(z)| \geq \Im \omega_B(z)>c>0$ by (\ref{imaginary_bound}), we hence obtain 
\begin{equation}\label{right_second_a}
K_{B,2}=\frac{1}{z_1-z_2} \frac{1}{N} \sum_{j=1}^N \frac{1}{ (a_j-\omega_B(z_1))^2(a_j-\omega_B(z_2))} \frac{T_B(z_1,z_2)}{L_B(z_1,z_2)}+E_{B}(z_1,z_2),
\end{equation}
and similarly, 
\begin{equation}\label{right_second_b}
K_{A,2}=\frac{1}{z_1-z_2} \frac{1}{N} \sum_{j=1}^N \frac{1}{ (b_j-\omega_A(z_1))^2(b_j-\omega_A(z_2))} \frac{T_A(z_1,z_2)}{L_A(z_1,z_2)}+E_{A}(z_1,z_2),
\end{equation}
where the error functions $E_{A}(z_1,z_2), E_{B}(z_1,z_2)$ are analytic in $z_1, z_2 \in \C \setminus \R$ with the same upper bound as in (\ref{bound_notation}).
Note that by (\ref{LM_def}) and (\ref{LM}), we have
\begin{align*}
\frac{1}{N}\sum_{j=1}^N \frac{1}{(a_j-\omega_B(z_1))^2(a_j-\omega_B(z_2))}&=\frac{1}{\omega'_B(z_1)}\frac{\partial}{\partial z_1} L_B(z_1,z_2)\nonumber\\ &= \frac{m'_{fc}(z_1)}{\omega'_B(z_1)} \frac{1}{\omega_B(z_1)-\omega_B(z_2)}-\frac{m_{fc}(z_1)-m_{fc}(z_2)}{(\omega_B(z_1)-\omega_B(z_2))^2}.
\end{align*}
Plugging (\ref{right_first_a}), (\ref{right_first_b}), (\ref{right_second_a}) and (\ref{right_second_b}) into (\ref{kernel_K_1}), together with (\ref{LM}), (\ref{T_B}) and (\ref{LMprime}), we have
\begin{equation*}
K(z_1,z_2)= \frac{\omega'_A(z_1)}{\omega_A(z_1)-\omega_A(z_2)}+\frac{\omega'_B(z_1)}{\omega_B(z_1)-\omega_B(z_2)}-\frac{1}{z_1-z_2}-\frac{m_{fc}'(z_1) m_{fc}(z_2)}{m_{fc}(z_1)(m_{fc}(z_1)-m_{fc}(z_2))}+E(z_1,z_2),
\end{equation*}
where the error term $E(z_1,z_2)$ has an upper bound from (\ref{m_bound}), (\ref{imaginary_bound}), (\ref{omega_prime_bound}) and (\ref{bound_notation}):
\begin{equation}
|E(z_1,z_2)|=O_{\prec}\Big( \frac{1}{\sqrt{N |\eta_1|} |\eta_2|} +\frac{1}{\sqrt{N |\eta_2|} |\eta_1|}+\frac{1}{N \eta^2_1}+\frac{1}{N |\eta_1\eta_2|} \Big).
\end{equation}
Since $E(z_1,z_2)$ is analytic in $z_1, z_2 \in \C \setminus \R$, the Cauchy integral formula yields
\begin{align}\label{plug}
\frac{\partial }{\partial {z_2}}K(z_1,z_2)=\mathcal{K}(z_1,z_2)+\mathcal{E}(z_1,z_2),
\end{align}
with the kernel function
\begin{align}
\mathcal K(z_1,z_2)&:= \frac{\omega'_A(z_1)\omega'_A(z_2)}{(\omega_A(z_1)-\omega_A(z_2))^2}+\frac{\omega'_B(z_1)\omega'_B(z_2)}{(\omega_B(z_1)-\omega_B(z_2))^2}-\frac{1}{(z_1-z_2)^2}-\frac{m_{fc}'(z_1) m'_{fc}(z_2)}{(m_{fc}(z_1)-m_{fc}(z_2))^2}\nonumber\\
&=\pzab \log \Big( \frac{(\omega_A(z_1)-\omega_A(z_2)) (\omega_B(z_1)-\omega_B(z_2)) }{(z_1-z_2)(\frac{1}{m_{fc}(z_2)}-\frac{1}{m_{fc}(z_1)})} \Big)=- \pzab \log (\Delta(z_1,z_2)),
\end{align}
with $\Delta(z_1,z_2)$ in (\ref{delta2}), and the error function
\begin{equation}\label{error_bound}
\mathcal{E}(z_1,z_2)=O_{\prec} \Big(\frac{1}{\sqrt{N| \eta_1|} \eta^2_2} +\frac{1}{\sqrt{N |\eta_2|} |\eta_1 \eta_2|}+\frac{1}{N \eta^2_1|\eta_2|}+\frac{1}{N |\eta_1|\eta^2_2}\Big).
\end{equation}
Plugging (\ref{plug}) into (\ref{plug_to}), by (\ref{error_bound}) and (\ref{assumpf}), we have
$$\phi'(\lambda)=\lambda \E [ \ea] \frac{1}{4 \pi^2} \int_{\partial \Omega_1}\int_{\partial \Omega_2}   \tf(z_1)  \tf(z_2)  \mathcal{K}(z_1,z_2) \dd z_2   \dd z_1+O_{\prec}\Big(\frac{N^{3 \tau}}{\sqrt{N \eta_0}} \Big)+O_{\prec}( |\lambda|  N^{ -\tau}).$$
The last step is to replace $\ea$ by $e(\lambda)$ with difference (\ref{e2}), provided that $V(f) \prec 1$. Thus we finish the proof of Proposition \ref{prop}.

\subsection{Proof of Theorem \ref{meso}}
We end this section by computing the explicit formula of $V(f)$ in (\ref{vf}), where the test function $f$ is given in (\ref{fn}). If $z_1 \in \Gamma_1$ and $z_2 \in \Gamma_1$ are in the same half plane, using the expansions of $\omega_A(z)$, $\omega_B(z)$ and $m_{fc}(z)$ near $E_0$ inside the regular bulk such that $|m_{fc}'(z)|, |\omega_A'(z)|, |\omega_{B}'(z)| \sim 1$, we have 
$$\mathcal K(z_1,z_2)=O(\frac{1}{|z_1-z_2|}) =O(\eta_0^{-1} N^{\tau}).$$ 
Combining with (\ref{assumpf}), the integral for $z_1$ and $z_2$ belonging to the same half plane only contributes $O(\eta_0N^{\tau})$. If $z_1$ and $z_2$ are in different half planes, since $E_0$ is in the regular bulk, we have $|m_{fc}(z_1)-m_{fc}(z_2)| \geq c>0$, as well as $|\omega_A(z_1)-\omega_A(z_2)|$, $|\omega_B(z_1)-\omega_B(z_2)|$. Hence
$$\mathcal K(z_1,z_2) =-\frac{1}{(z_1-z_2)^2}+O(1).$$ 
Since the computation in the following is similar as the proof of Lemma 6.1 in \cite{Li+Schnelli+Xu}, we omit it here. Therefore, we have
$$\lim_{N \rightarrow \infty}V(f)=\frac{1}{4 \pi^2} \int_{\R} \int_{\R} \frac{(g(x)-g(y))^2}{(x-y)^2} \dd x \dd y.$$
Thus $V(f)$ converges to some positive constant since $g \in C^2_c(\R)$. Theorem \ref{meso} is a direct result of Proposition \ref{prop} after integrating $\phi'(\lambda)$ and using the L\'{e}vy continuity theorem.
 Hence we finish the proof of Theorem \ref{meso}.

\section{Proof of Proposition \ref{variance_bfbg}}\label{sec_proof_main_lemma}
In this section, we use a partial randomness decomposition to prove Proposition \ref{variance_bfbg}. We remark that this decomposition was a key ingredient in~\cite{eta1,eta2} to derive the local laws in Theorem~\ref{local}. For any Haar unitary matrix $U \equiv U_N \in U(N)$,  there exists a random vector $\vi$, the $i$-th column of the matrix $U$ and  an independent Haar unitary matrix $U^{i}\in U(N-1)$, such that
$$U=-e^{\i \theta_i} R_i U^{\<i\>};\qquad R_i:=I-\ri \ri^*;\qquad \ri:=\sqrt{2} \frac{\ei+e^{-\ii \theta_i} \vi}{\|\ei+e^{-\ii \theta_i} \vi\|_2},$$
where $\theta_i$ is the argument of $i$-th entry of $\vi$, denoted by $v_{ii}$, $R_i$ is the Householder transform sending $\ei$ to $-e^{-\i \theta_i}{\bm v_i}$, and $U^{\<i\>}$ is a unitary matrix with $\ei$ as its $i$-th column and $U^{i}$ as its $(i,i)$-minor. Thus we can write
$$\tb=U B U^* =R_i \tb^{\<i\>} R_i, \quad \mbox{with} \quad \tb^{\<i\>}:=U^{\<i\>} B (U^{\<i\>})^*.$$
Note that $\tbhat$ is independent of $\vi$, and define
\begin{equation}\label{hat_definition}
H^{\<i\>}:=A+\tbhat; \qquad  \ghat(z):=(A+\tbhat-z)^{-1}.
\end{equation}
It is well-known that $\vi$ is a uniformly distributed unit vector in $\C^{N}$, and there exists a Gaussian vector $\widetilde \gi \sim {N}_{\C}(0,N^{-1}I_N)$, such that 
\begin{equation}\label{uniform_vector}
\vi=\frac{\widetilde \gi}{\|\widetilde \gi\|_2}.
\end{equation}
 Hence we can write 
$$\hi:=e^{-\ii \theta_i}\vi=e^{-\ii \theta_i} \frac{\widetilde{\gi}}{\|\widetilde \gi\|_2};  \qquad \ri:=l_i (\ei+{\bm h_i}), \qquad \mbox{with } l_i :=\frac{\sqrt{2}}{\|\ei+{\bm h_i}\|_2}=1+O_{\prec}(\frac{1}{\sqrt{N}}).$$
Note that $\hi$ is independent of $ \tb^{\<i\>}$, and 
$$R_i \ei=-{\bm h_i}; \qquad R_i {\bm h_i}=-\ei; \qquad {\bm h_i}^* \tb^{\<i\>} R_i=-\ei^* \tb; \qquad \ei^* \tb^{\<i\>} R_i=-b_i {\bm h_i}^*=-{\bm h_i}^* \tb .$$

Set $g_{ik}:=e^{-\ii \theta_i} \widetilde{g}_{ik}$ for $k \neq i$
where $g_{ik} \sim {N}_{\C}(0,N^{-1})$ are independent Gaussian random variables, and we further introduce an independent Gaussian random variable $g_{ii} \sim {N}_{\C}(0,N^{-1})$. To simplify the proof, we use the Gaussian vector $\gi:=(g_{i1}, \cdots g_{iN})\sim {N}_{\C}\Big(0,\frac{1}{N}I_N\Big)$ to approximate $\hi$, and define
$$ \tb^{(i)}=W_i \tb^{\<i\>} W_i; \qquad \mbox{with  }W_i:=I-{\bm w_i} {\bm w_i}^*, \qquad {\bm w_i}:=\ei+\gi,$$
and 
\begin{equation}\label{bra_definition}
H^{(i)}:=A+B^{(i)}; \qquad G^{(i)}(z):=(A+B^{(i)}-z)^{-1}.
\end{equation}
Note that we have 
\begin{equation}\label{h_approximate_r}
\hi=\frac{|\widetilde g_{ii}|-g_{ii}}{\|\widetilde \gi\|_2} \ei+ \frac{1}{\|\widetilde \gi\|_2} \gi; \qquad \ri={\bm w_i}+d_{1} \ei +d_2 \gi,
\end{equation} 
where 
\begin{equation}\label{bound_d}
d_1:=( l_i -1)+l_i \frac{|\widetilde g_{ii}|-g_{ii}}{\|\widetilde \gi\|_2}=O_{\prec} \Big(\frac{1}{\sqrt{N}}\Big); \qquad  d_2 := \frac{l_i}{\|\widetilde \gi\|_2} -1=O_{\prec}\Big( \frac{1}{\sqrt{N}}\Big).
\end{equation}
Because of this, $G^{(i)}(z)$ is a good approximation of $G(z)$, see (\ref{removei_1}) below. 
\begin{lemma}[Lemma 4.1 in \cite{eta1}]
For any $1 \leq i,j, k\leq N$ and $z=E+\ii \eta \in D_{bulk}$, we have 
\begin{equation}\label{removei_1}
\Big| G_{jk}(z)-G^{(i)}_{jk}(z)\Big| , \qquad \Big| (\tb G)_{jk}-(\tbbra G^{(i)})_{jk}\Big| \prec \frac{1}{\sqrt{N \eta}}.
\end{equation}
\end{lemma}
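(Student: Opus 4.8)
The plan is to compare both $G$ and $G^{(i)}$ with the resolvent $\ghat$ of the auxiliary matrix $H^{\<i\>}=A+\tbhat$ of~\eqref{hat_definition}. The decisive structural fact is that, in the partial randomness decomposition, the unitary $U^{\<i\>}$ is independent of the column $\vi$; hence $\ghat$ is independent of $\vi$, and thus of $\hi,\gi,\ri$ and $\wi$, and it obeys a local law analogous to Theorem~\ref{local} (since $H^{\<i\>}$ is a bounded-rank perturbation of a genuine Haar free-sum model, cf.~\cite{eta1}). Although neither $G$ nor $G^{(i)}$ is close to $\ghat$ in operator norm, both are rank-$\le 2$ perturbations of $H^{\<i\>}$: using $R_i=I-\ri\ri^*$ and $\tbhat\ei=b_i\ei$ one gets $\tb-\tbhat=\widehat V\widehat M\widehat V^*$ with $\widehat V:=[\,\ri\ \ \tbhat\ri\,]\in\C^{N\times 2}$ and $\widehat M$ the $2\times2$ matrix with $\widehat M_{11}=\ri^*\tbhat\ri$, $\widehat M_{12}=\widehat M_{21}=-1$, $\widehat M_{22}=0$; likewise $\tbbra-\tbhat=VMV^*$ with $V:=[\,\wi\ \ \tbhat\wi\,]$, $M_{11}=\wi^*\tbhat\wi$, $M_{12}=M_{21}=-1$, $M_{22}=0$. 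By~\eqref{h_approximate_r}--\eqref{bound_d} one has $\ri=\wi+\bm{\delta}_i$ with $\bm{\delta}_i=d_1\ei+d_2\gi$ and $\|\bm{\delta}_i\|_2=O_\prec(N^{-1/2})$, so $\widehat V,\widehat M$ differ from $V,M$ by $O_\prec(N^{-1/2})$.

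By the Sherman--Morrison--Woodbury identity,
\begin{align*}
 G^{(i)}&=\ghat-\ghat V\bigl(M^{-1}+V^*\ghat V\bigr)^{-1}V^*\ghat\,,\\
 G&=\ghat-\ghat\widehat V\bigl(\widehat M^{-1}+\widehat V^*\ghat\widehat V\bigr)^{-1}\widehat V^*\ghat\,.
\end{align*}
The inverted $2\times2$ matrices have entries built from $\ghat_{ii}$ and the quadratic forms $\wi^*\ghat\wi,\ \wi^*\ghat\tbhat\wi,\ \wi^*\tbhat\ghat\tbhat\wi$ (resp.\ with $\ri$). Splitting $\wi=\ei+\gi$ and using the local law for $\ghat$ together with the large deviation bounds for linear and quadratic forms in the Gaussian $\gi$ (independent of $\ghat$) -- which give $(\ghat\gi)_j=O_\prec((N\eta)^{-1/2})$, $\gi^*\ghat\gi=\frac1N\Tr\ghat+O_\prec((N\eta)^{-1/2})$, and the analogous bounds for the uniform $\hi$ -- all these scalars are $O_\prec(1)$, and their $\ri$-versions differ from the $\wi$-versions by $O_\prec(N^{-1/2})$. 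Applying the stability estimate of Lemma~\ref{bound_bulk} to the deterministic leading-order matrix (common to both $2\times2$ systems) shows that the determinants are bounded away from $0$ on $D_{bulk}$, so both inverses are $O_\prec(1)$ and differ from each other by $O_\prec(N^{-1/2})$. Similarly, the columns of $\ghat V,\ \ghat\widehat V$ and rows of $V^*\ghat,\ \widehat V^*\ghat$ are $O_\prec(1)$ entrywise.

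Subtracting the two Woodbury formulas and telescoping -- replacing $\ri$ by $\wi$ one occurrence at a time -- writes $G_{jk}-G^{(i)}_{jk}$ as a finite sum of products of $O_\prec(1)$ factors with exactly one factor of the form $(\ghat\bm{\delta}_i)_j$, $(\ghat\tbhat\bm{\delta}_i)_j$, $(\bm{\delta}_i^*\ghat)_k$, $(\bm{\delta}_i^*\tbhat\ghat)_k$, $\bm{\delta}_i^*\ghat\wi$ or $\bm{\delta}_i^*\ghat\tbhat\wi$; since $\bm{\delta}_i=d_1\ei+d_2\gi$ with $d_1,d_2=O_\prec(N^{-1/2})$ and $\gi$ is independent of $\ghat$, each such factor is $O_\prec(N^{-1/2})=O_\prec((N\eta)^{-1/2})$ (using $\eta\le1$), proving the first bound. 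For the second bound one carries an additional left factor $\tb=R_i\tbhat R_i$ (resp.\ $\tbbra=W_i\tbhat W_i$) through the same computation, using that $\tbhat\ghat$ is controlled by the $\tb G$-part of Theorem~\ref{local} up to a rank-$2$ correction; equivalently, one establishes the matched local law for $(\tbbra G^{(i)})_{jk}$ along the way and combines it with the first bound.

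The main obstacle is the bookkeeping that guarantees, after the subtraction, that every surviving term carries exactly one $\bm{\delta}_i$-factor while all accompanying scalars are genuinely $O_\prec(1)$. The crucial input is the independence of $\vi$ (hence of $\gi$ and $\hi$) from $\ghat$: this is what upgrades the naive Cauchy--Schwarz bound $O_\prec(\eta^{-1/2})$ for the vector--resolvent contractions to $O_\prec(1)$, and the $\bm{\delta}_i$-contractions from the crude $O_\prec(N^{-1/2}\eta^{-1})$ to the optimal $O_\prec((N\eta)^{-1/2})$. A secondary point is checking, via Lemma~\ref{bound_bulk}, that the two $2\times2$ determinants in the Woodbury formulas remain bounded away from zero uniformly on $D_{bulk}$, so that the inverses are stable and close to one another.
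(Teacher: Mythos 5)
First, the paper does not actually prove this lemma; it cites Lemma~4.1 of~\cite{eta1}, so there is no in-paper argument to compare against. Your Woodbury strategy---expand both $G$ and $G^{(i)}$ around the $\gi$-independent $\ghat$, then telescope the $\ri\leadsto\wi$ replacement---is the right skeleton, and it is the approach taken in~\cite{eta1}. But as written the argument has a genuine gap.

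The gap is the claim that the scalars $\wi^*\ghat\wi$, $\wi^*\ghat\tbhat\wi$, $\wi^*\tbhat\ghat\tbhat\wi$ (and their $\ri$-versions) are $O_\prec(1)$. This fails in the $\ei$-direction. By construction $\tbhat\ei=b_i\ei$, so $\ei$ is an eigenvector of $H^{\li}=A+\tbhat$ with eigenvalue $a_i+b_i$ and hence $\ghat\ei=(a_i+b_i-z)^{-1}\ei$. In particular $\ghat_{ii}=(a_i+b_i-z)^{-1}$ is \emph{not} controlled by the bulk local law: nothing prevents $|a_i+b_i-E|$ from being of order $\eta$ for the relevant index $i$, in which case $\ghat_{ii}\sim\eta^{-1}$. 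Since $\wi=\ei+\gi$ has a component $1+g_{ii}=1+O_\prec(N^{-1/2})$ along $\ei$, the quadratic form $\wi^*\ghat\wi$ contains the term $|1+g_{ii}|^2\ghat_{ii}$, which can be of order $\eta^{-1}$, not $O_\prec(1)$; the same is true of $\ri^*\ghat\ri$ and the other quadratic forms. Consequently the $2\times2$ blocks $M^{-1}+V^*\ghat V$ and $\hat M^{-1}+\hat V^*\ghat\hat V$ can have $\eta^{-1}$-sized entries, their difference can be of size $N^{-1/2}\eta^{-1}$, and neither the asserted $O_\prec(1)$ bound on the inverses nor the $O_\prec(N^{-1/2})$ bound on the difference of the inverses follows from Lemma~\ref{bound_bulk} (which controls $\omega_A,\omega_B,m_{fc},\Delta$, not $\ghat_{ii}$). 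There is no bounded ``deterministic leading-order matrix'' to apply stability to.

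This is exactly the obstruction that \cite{eta1} (and the Appendix of this paper, in the proof of Lemma~\ref{concentrate_2}) removes by a further regularization: one subtracts the rank-one term $(b_i+\omega_B(z)-z)\ei\ei^*$ from the Hamiltonian, producing $G^{\{i\}}$ in~\eqref{hello} with $G^{\{i\}}_{ii}=(a_i-b_i-2\omega_B(z)+z)^{-1}+O_\prec(\Psi)=O_\prec(1)$ (cf.~\eqref{le pseudo local laws}), and ultimately $G^{[i]}$ in~\eqref{final_G} with $G^{[i]}_{ii}=(a_i-\omega_B(z))^{-1}$ deterministic and bounded. Any Woodbury-based proof of the lemma must build in such a regularization---or else explicitly track the rank-one structure of the $\eta^{-1}$-sized singular part of the $2\times2$ blocks, which is proportional to $(1,\;b_i)^{*}(1,\;b_i)$ and cancels out of the final comparison---before the stability and telescoping estimates you invoke become available. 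As written, the statement that ``all these scalars are $O_\prec(1)$'' is false, and the proof does not close.
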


Before we proceed with the proof, we first introduce some previous results that will be used later.
\begin{lemma}[Corollary 5.2, Propositions 6.1 and 8.3 in \cite{eta1}]\label{previous_bound}
For $j \neq i$, define
\begin{equation}\label{define_S_one}
S_i^{[1]}:=\gi^* \tb^{\<i\>}G^{(i)} \ei; \qquad S_{i,j}^{[1]}:=\gi^* \tb^{\<i\>}G^{(i)} \ej; \qquad T_i^{[1]}:=\gi^*G^{(i)} \ei; \qquad T_{i,j}^{[1]}:=\gi^* G^{(i)} \ej.
\end{equation}
Then for all $z=E+\ii \eta \in D_{bulk}$, we have the following estimates
$$S_{i}^{[1]}=-\frac{z-\omega_B(z)}{a_i-\omega_B(z)}+O_{\prec}\Big(\frac{1}{\sqrt{N \eta}}\Big); \qquad T_i^{[1]},\quad T_{i,j}^{[1]}, \quad S_{i,j}^{[1]}, \quad \gbra_{ij} =O_{\prec}\Big(\frac{1}{\sqrt{N \eta}}\Big).$$
Hence we obtain the following local laws:
\begin{equation}\label{local_law_lemma}
(\tbbra \gbra)_{ii}=-S_{i}^{[1]}+O_{\prec}\Big(\frac{1}{\sqrt{N}}\Big)=\frac{z-\omega_B}{a_i-\omega_B}+O_{\prec}\Big(\frac{1}{\sqrt{N\eta}}\Big); \qquad \gbra_{ii}=\frac{1}{a_i-\omega_B}+O_{\prec}\Big(\frac{1}{\sqrt{N\eta}}\Big).
\end{equation}
In addition, for $\xii, \yi$ either $\gi$ or $\ei$, and $Q^{\<i\>}_1,Q_2^{\<i\>}$ either $\tbhat$ or $I$, we have an upper bound:
$$\max_{i} |\xii^*  Q^{\<i\>}_1 \gbra Q^{\<i\>}_2\yi | \prec 1.$$
Moreover, for all $1 \leq i,j \leq N$, we have
\begin{equation}\label{trivial}
|\gi^* \ej|, \qquad |\gi^* \tbhat  \ej|, \qquad |\gi^* \tbhat  \gi| \prec \frac{1}{\sqrt{N}}.
\end{equation}
\end{lemma}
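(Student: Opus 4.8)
These three estimates are Corollary~5.2 and Propositions~6.1 and~8.3 of~\cite{eta1}; I outline how I would reprove them from the partial randomness decomposition together with the ingredients already available above. The starting point is that, for fixed $i$, the resolvent $\gbra$ depends on the Gaussian vector $\gi$ only through the rank-at-most-two matrix $V_i:=\tbbra-\tbhat=W_i\tbhat W_i-\tbhat$; writing $\bm p:=(\wi^*\tbhat\wi)\wi-\tbhat\wi$ and $\bm q:=-\tbhat\wi$ one has $V_i=\wi\bm p^*+\bm q\wi^*$. I would use the resolvent identity $\gbra=\ghat-\ghat V_i\gbra$ and invert this rank-at-most-two perturbation (Woodbury), so that every entry and bilinear form of $\gbra$ becomes an explicit rational function of $\ghat$, of $\tbhat$, and of finitely many scalar forms that are either independent of $\gi$ or reduce to $S_i^{[1]},T_i^{[1]},S_{i,j}^{[1]},T_{i,j}^{[1]}$ themselves (these reappear because $V_i$ carries $\wi=\ei+\gi$, so e.g.\ $\wi^*\gbra\ei=\gbra_{ii}+T_i^{[1]}$).

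The deterministic inputs I would use are: (i) $\ghat$ is the resolvent of $A+\tbhat$, and since $U^{\<i\>}$ fixes $\ei$ one has $\tbhat\ei=b_i\ei$, hence $\ghat\ei=(a_i+b_i-z)^{-1}\ei$; moreover $\ghat$ is a rank-at-most-two modification of $G$ (by the generic vector $\ri$), so by the analogue of~\eqref{removei_1} it also obeys the local law, in particular $\underline{\ghat}=m_{fc}+O_\prec(\Psi^2)$ and $\frac{1}{N}\Tr(\tbhat\ghat)=(z-\omega_B)m_{fc}+O_\prec(\Psi^2)$; (ii) by~\eqref{removei_1} and Theorem~\ref{local}, $\gbra$ inherits the local law $\gbra_{jk}=\delta_{jk}(a_j-\omega_B)^{-1}+O_\prec(\Psi)$ and $(\tbbra\gbra)_{jk}=\frac{z-\omega_B}{a_j-\omega_B}\delta_{jk}+O_\prec(\Psi)$; (iii) standard Gaussian concentration: for $M$ independent of $\gi$, $\gi^*M\gi=\frac{1}{N}\Tr M+O_\prec(\|M\|_{\mathrm{HS}}/N)$ and $\gi^*M\ej=O_\prec(\|M\ej\|_2/\sqrt N)$. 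Using (iii) with $\Tr\tbhat=\Tr B=0$ and $\|\tbhat\|_{\mathrm{HS}}\le M\sqrt N$ gives~\eqref{trivial} at once.

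Next I would feed the Woodbury expansion into $S_i^{[1]}=\gi^*\tbhat\gbra\ei$ and $T_i^{[1]}=\gi^*\gbra\ei$, evaluate all $\gi$-independent forms using (i)--(iii) and $\tbhat\ei=b_i\ei$, $\wi^*\gbra\ei=\gbra_{ii}+T_i^{[1]}$, $\wi^*\tbhat\wi=b_i+O_\prec(1/\sqrt N)$. This produces a $2\times2$ linear system for $(S_i^{[1]},T_i^{[1]})$ whose coefficient matrix is an $O_\prec(\Psi)$-perturbation of the matrix governing the subordination relations~\eqref{self_m_fc}, hence uniformly invertible on $D_{bulk}$ by~\eqref{imaginary_bound} and~\eqref{delta_bound}; solving it yields $S_i^{[1]}=-\frac{z-\omega_B}{a_i-\omega_B}+O_\prec(\Psi)$ and shows $T_i^{[1]}$ has no leading term, so $T_i^{[1]}=O_\prec(\Psi)$. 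The off-diagonal quantities $S_{i,j}^{[1]},T_{i,j}^{[1]}$ satisfy the analogous homogeneous system and are therefore $O_\prec(\Psi)$, while $\gbra_{ij}=O_\prec(\Psi)$ for $i\ne j$ is immediate from~\eqref{removei_1} and Theorem~\ref{local}. For the identity $(\tbbra\gbra)_{ii}=-S_i^{[1]}+O_\prec(1/\sqrt N)$ I would use $W_i\ei=-\gi-\overline{g_{ii}}\wi$, giving $\tbbra\ei=-W_i\tbhat\gi-\overline{g_{ii}}W_i\tbhat\wi$; pairing with $\gbra\ei$ and expanding $W_i=I-\wi\wi^*$, the main term is $-\gi^*\tbhat\gbra\ei=-S_i^{[1]}$, the term $(\gi^*\tbhat\wi)(\wi^*\gbra\ei)$ is $O_\prec(1/\sqrt N)$ by~\eqref{trivial}, and the $\overline{g_{ii}}$-term is $O_\prec(1/\sqrt N)$ because $\wi^*\tbhat W_i\gbra\ei$ is $O(1)$ once $S_i^{[1]},T_i^{[1]}$ are known bounded; together with (ii) this gives the formulas in~\eqref{local_law_lemma}. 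Finally, $\max_i|\xii^*Q^{\<i\>}_1\gbra Q^{\<i\>}_2\yi|\prec 1$ follows by the same Woodbury reduction, since after the cancellations enforced by the system each such form reduces to the already-controlled quantities and to $\gi$-independent forms of size $O(1)$.

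The hard part will be closing the self-consistent system: one must check that the Woodbury reduction terminates at $\{S_i^{[1]},T_i^{[1]},S_{i,j}^{[1]},T_{i,j}^{[1]}\}$ together with entries of $\gbra$ already covered by~\eqref{removei_1}, and, more importantly, that the errors produced by (iii) \emph{before} cancellation---which are genuinely of order one, since e.g.\ $\ghat_{ii}=(a_i+b_i-z)^{-1}$ can be as large as $\eta^{-1}$ when some $a_i+b_i$ lies near $E$---organize into a linear system whose inversion absorbs them. This is exactly where the stability of the subordination equations, in particular the lower bound on $\Delta(z)$ in~\eqref{delta_bound} of Lemma~\ref{bound_bulk}, becomes indispensable. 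A secondary subtlety is obtaining the sharp error $O_\prec(1/\sqrt N)$, rather than the generic $O_\prec(\Psi)$, in $(\tbbra\gbra)_{ii}=-S_i^{[1]}+O_\prec(1/\sqrt N)$, which relies on the error vector being proportional to $\overline{g_{ii}}$ and on a cancellation keeping the bounded factor it multiplies of order one.
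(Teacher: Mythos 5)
This lemma is not proved in the paper at all: it is imported verbatim from \cite{eta1} (Corollary 5.2, Propositions 6.1 and 8.3 there), and the paper only summarizes the strategy of \cite{eta1} -- Gaussian integration by parts for $\E_{\gi}$ combined with the concentration estimates of Lemma~\ref{lemma_concentrate} to produce a self-consistent pair of equations for $S_i^{[1]}$ and $T_i^{[1]}$ -- before reusing that machinery for the two-point functions in Section~\ref{sec_proof_main_lemma} and the Appendix. Your sketch is in the same spirit (partial randomness decomposition, concentration, a $2\times 2$ system), but you replace the integration-by-parts derivation of the system by a direct Woodbury inversion of the rank-two perturbation $\tbbra-\tbhat$. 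That substitution is not fatal in principle -- the Appendix of this paper does use a twice-iterated rank-one inversion (Lemma~\ref{le lemma twice rank one}, quoted from (5.17), (5.22) of \cite{eta1}) -- but as written your argument has a gap precisely at the point you flag as ``the hard part.''

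The gap is this: expanding around $\ghat$ is genuinely unstable, since $\ghat\ei=(a_i+b_i-z)^{-1}\ei$ and $a_i+b_i$ may sit at distance $\eta$ from $E$, and your proposed remedy -- that the inversion of the averaged $2\times2$ system, via the lower bound on $\Delta(z)$ in~\eqref{delta_bound}, ``absorbs'' these errors -- conflates two different stability mechanisms. The bound on $\Delta(z)$ controls the solvability of the \emph{averaged} self-consistent system; it does nothing for the \emph{pointwise-in-$i$} blow-up of $\ghat_{ii}$, nor for the Woodbury denominators of the form $1+\Xi_i(z)$, which must separately be bounded away from zero. The fix actually used (see~\eqref{hello}--\eqref{le pseudo local laws} and~\eqref{final_G} in the Appendix, following \cite{eta1}) is to regularize the comparison resolvent by the rank-one shift $-(b_i+\omega_B(z)-z)\ei\ei^*$, so that $G^{[i]}_{ii}=(a_i-\omega_B(z))^{-1}$ is of order one by~\eqref{imaginary_bound}, and then to prove $|1+\E_{\gi}\Xi_i(z)|^{-1}\prec 1$ as in~\eqref{le bound from eta1 paper}; without this step your expansion does not close. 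A secondary caveat: you invoke Theorem~\ref{local} for $\gbra$ as an input, which is legitimate inside this paper (both results are black boxes from \cite{eta1,eta2}) but would be circular as an independent proof, since in \cite{eta1} the estimates on $S_i^{[1]},T_i^{[1]}$ are the means by which Theorem~\ref{local} is established. The elementary parts of your sketch -- the bounds~\eqref{trivial} from Gaussian concentration with $\Tr B=0$, and the identity $(\tbbra\gbra)_{ii}=-S_i^{[1]}+O_\prec(N^{-1/2})$ via $W_i\ei=-\gi-\overline{g_{ii}}\wi$ -- are correct and match the computation the paper performs in~\eqref{approximate_qq} for the two-point analogue.
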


In the proof of the local laws in (\ref{local_law_lemma}), the following two lemmas were introduced in \cite{eta1}. Recall the shorthand notation~\eqref{notation_X}.
\begin{lemma}[Lemma 5.1, Corollary 5.2 in \cite{eta1}]\label{lemma_concentrate}
For all $z=E+\ii \eta \in D_{bulk}$ and $1 \leq i \leq N$, we have
\begin{equation}\label{concentrate_1}
|\IE S_i^{[1]}|,\quad |\IE  T_i^{[1]}|,\quad |\IE  G_{ii}^{(i)}| \prec \frac{1}{\sqrt{N \eta}},
\end{equation}
where we use the notation that for any general random variable~$\mathcal{X}$,
$$\IE \mathcal{X}:=\mathcal{X}-\E_{\gi} \mathcal{X},$$
denoting by $\E_{\gi}$ the partial expectation with respect to the Gaussian vector $\bm{g_i}$.
\end{lemma}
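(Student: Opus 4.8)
\emph{Proof proposal.} The plan is to argue conditionally on $U^{\langle i\rangle}$. By the partial-randomness construction the Gaussian vector $\gi\sim N_\C(0,N^{-1}I_N)$ is independent of $U^{\langle i\rangle}$, hence of $\tbhat$ and of $\ghat:=(A+\tbhat-z)^{-1}$; thus $\E_{\gi}[\,\cdot\,]=\E[\,\cdot\mid U^{\langle i\rangle}\,]$, and each of $S_i^{[1]}$, $T_i^{[1]}$, $G^{(i)}_{ii}$ is a smooth function $\mathcal X=\mathcal X(\gi)$ of $\gi$ alone, with $\mathcal X-\E_{\gi}\mathcal X=\IE\mathcal X$. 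Since $\prec$ follows from moment bounds, it suffices to prove, for every fixed $p\in\N$, that $\E_{\gi}\big|\mathcal X-\E_{\gi}\mathcal X\big|^{2p}\prec (N\eta)^{-p}$; combined with a crude bound $|\mathcal X|\le N^{C}$ valid off an event of probability $\le N^{-D}$ (from $\|G^{(i)}\|_{\mathrm{op}}\le\eta^{-1}$, $\eta\ge N^{-1}$ and $\|\gi\|\prec1$), Markov's inequality then yields~\eqref{concentrate_1}.

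To estimate the conditional $2p$-th moment I would expand it by repeated Gaussian integration by parts (the cumulant expansion, or Stein's identity) in the entries $g_{ik}$, $1\le k\le N$. Each integration by parts produces a factor $N^{-1}$, the variance of one $g_{ik}$, together with a derivative $\partial_{g_{ik}}$ (or $\partial_{\bar g_{ik}}$) acting on a resolvent, where $\partial_{g_{ik}}G^{(i)}=-G^{(i)}\big(\partial_{g_{ik}}B^{(i)}\big)G^{(i)}$ and $\partial_{g_{ik}}B^{(i)}=\partial_{g_{ik}}\big(W_i\tbhat W_i\big)$ is a matrix of rank $\le2$ built from $\ek$, $\wi=\ei+\gi$ and $\tbhat\wi$. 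After the algebra the sum over the contracted index $k$ always collapses to $\sum_k|(G^{(i)})_{kj}|^2=\eta^{-1}\Im(G^{(i)})_{jj}$ or $\sum_k|(\tbhat G^{(i)})_{kj}|^2\le M^2\eta^{-1}\Im(G^{(i)})_{jj}$, and the a priori diagonal bound $\Im(G^{(i)})_{jj}\prec1$ --- which follows from Theorem~\ref{local} and~\eqref{removei_1} --- supplies one extra factor $\eta^{-1}$ from each such summation. Hence every contracted pair of indices contributes $N^{-1}\eta^{-1}$, while the remaining explicit scalars in the expansion ($\|\wi\|$, $\wi^*\tbhat\wi$, diagonal resolvent entries, and forms $\gi^*Q_1^{\langle i\rangle}G^{(i)}Q_2^{\langle i\rangle}\yi$, all $O_\prec(1)$) are of order one; the standard book-keeping of the $2p$-fold expansion then gives $\E_{\gi}|\mathcal X-\E_{\gi}\mathcal X|^{2p}\prec(N\eta)^{-p}$. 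The three quantities are treated identically, the explicit factor $\gi^*$ in $S_i^{[1]}$ and $T_i^{[1]}$ being merely one further place for the derivatives to land.

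The crux is to extract the \emph{sharp} scale $(N\eta)^{-1/2}$ rather than the weaker $N^{-1/2}\eta^{-1}$ that a plain Gaussian (Lipschitz) concentration estimate would give: the derivative $\partial_{g_{ik}}\mathcal X$ carries a prefactor of order $\eta^{-1/2}$, through $\|G^{(i)}\ei\|=(\eta^{-1}\Im(G^{(i)})_{ii})^{1/2}$, so $\|\nabla_{\gi}\mathcal X\|$ is only $O(\eta^{-1})$. The improvement comes entirely from the Ward identity $\sum_k|(G^{(i)})_{kj}|^2=\eta^{-1}\Im(G^{(i)})_{jj}$ combined with $\Im(G^{(i)})_{jj}\prec1$, which has to be tracked through every step of the expansion; this is the main technical point. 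An equivalent, more structural route is to first use a Woodbury/Schur identity to write $G^{(i)}$ as an explicit rational function of $\ghat$, of the $\gi$-independent bounded scalars $\ghat_{ii}$, $(\tbhat\ghat)_{ii}$, $(\tbhat\ghat\tbhat)_{ii}$, and of linear and quadratic forms $\gi^*Q_1^{\langle i\rangle}\ghat Q_2^{\langle i\rangle}\ei$ and $\gi^*Q_1^{\langle i\rangle}\ghat Q_2^{\langle i\rangle}\gi$ with $\gi$-independent, uniformly bounded coefficients; each such form concentrates at scale $(N\eta)^{-1/2}$ about its $\gi$-conditional mean by the classical estimates for linear and quadratic forms in a Gaussian vector, and the rational map is Lipschitz on the relevant set since its $2\times2$ denominator stays bounded away from zero, the analogue here of the stability bound~\eqref{delta_bound}. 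We note that in~\cite{eta1} this lemma is part of the bootstrap establishing the local laws of Lemma~\ref{previous_bound}, so it may not appeal to those; in the present context the local law of Theorem~\ref{local} may instead be taken as given.
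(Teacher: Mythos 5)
The paper does not prove this lemma at all: it is imported verbatim from \cite{eta1} (Lemma 5.1 and Corollary 5.2 there), so there is no in-paper argument to compare against line by line. Your sketch is nevertheless essentially the right argument, and in fact your second, ``structural'' route is the one actually used in \cite{eta1} and replayed in this paper's Appendix for the two-point analogue (Lemma \ref{concentrate_2}): one uses the rank-one perturbation formula \eqref{rank_one_formula} to peel $G^{(i)}$ down to a resolvent independent of $\gi$, expresses the target as a rational function of $\gi$-independent bounded scalars together with linear and quadratic forms in $\gi$, and then applies the Gaussian large-deviation bounds of Lemma \ref{gaussian_deviation} combined with the Ward identity $\sum_k|G_{kj}|^2=\eta^{-1}\Im G_{jj}$ to extract the sharp scale $(N\eta)^{-1/2}$ rather than the $N^{-1/2}\eta^{-1}$ of naive Lipschitz concentration. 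Your moment-method variant via iterated Gaussian integration by parts would also work and is standard. Two points deserve more care than your sketch gives them. First, the statement that ``the denominator stays bounded away from zero'' hides a genuine issue: the naive base resolvent $G^{\langle i\rangle}$ has $G^{\langle i\rangle}_{ii}(z)=(a_i+b_i-z)^{-1}$, which is not stable for $z\in D_{bulk}$; this is precisely why the Appendix (following \cite{eta1}) introduces the regularized resolvents $G^{\{i\}}$ and $G^{[i]}$ with the shift $-(b_i+\omega_B(z)-z)\ei\ei^*$, after which the relevant denominators are controlled by \eqref{le pseudo local laws} and \eqref{le bound from eta1 paper}. Second, your observation about circularity is correct and correctly resolved: in \cite{eta1} this lemma sits inside the bootstrap that \emph{produces} the local law, so there one may only use a priori bounds on the self-consistent scale, whereas in the present paper Theorem \ref{local} is available as a black box and your use of it is legitimate.
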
 

\begin{lemma}[Lemma 6.2 in \cite{eta1}]
For all $z=E+\ii \eta \in D_{bulk}$ and $1 \leq i \leq N$, we have
\begin{equation}\label{finite_rank_1}
|\ud{\tb G}-\ud{\tbhat \gbra}|, \quad |\ud{\tbhat \gbra \tbhat}-\ud{\tb G \tb}| \prec \frac{1}{N \eta}.
\end{equation}
Furthermore, we have the following upper bounds
\begin{equation}\label{trivial_4}
|\ud{\tbhat \gbra}|, \qquad |\ud{\tbhat \gbra \tbhat}| \prec 1.
\end{equation}
Moreover, we have
\begin{equation}\label{concentrate_4}
|\IE  \ud{\tbhat \gbra}| ,\quad |\IE \ud{\tbhat \gbra \tbhat}| \prec \frac{1}{N \eta}.
\end{equation}
\end{lemma}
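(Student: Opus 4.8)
The plan is to prove the comparison estimate \eqref{finite_rank_1} by a finite-rank perturbation argument and then to read off \eqref{trivial_4} and \eqref{concentrate_4} from it. Throughout, $z=E+\ii\eta$ ranges over $D_{bulk}$ (so $\eta>0$), and I bring in the auxiliary resolvent $\ghat=(A+\tbhat-z)^{-1}$ of \eqref{hat_definition}, whose key feature is that it is a function of $U^{\langle i\rangle}$ only, hence independent of $\gi$ and of $\hi$. The structural input is that $\tb=R_i\tbhat R_i$ and $\tbbra=W_i\tbhat W_i$ with $R_i=I-\ri\ri^*$, $W_i=I-\wi\wi^*$, so that each of $\tb-\tbhat$, $\tbbra-\tbhat$ and $R_iAR_i-A$ has rank at most two, with factor vectors of Euclidean norm $O_\prec(1)$ (by \eqref{assumption_1} and $\|\ri\|_2^2=2$, $\|\wi\|_2^2=2+O_\prec(N^{-1/2})$); in particular $G$ and $\ghat$, as well as $\gbra$ and $\ghat$, are resolvents of operators differing by a rank-$\le 2$ additive perturbation.

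\emph{Proof of \eqref{finite_rank_1}.} Splitting $\Tr(\tb G)-\Tr(\tbhat\ghat)=\Tr((\tb-\tbhat)G)+\Tr(\tbhat(G-\ghat))$, using the resolvent identity $G-\ghat=-\ghat(\tb-\tbhat)G$, and inserting the explicit rank-$\le 2$ form of $\tb-\tbhat$, one writes $\ud{\tb G}-\ud{\tbhat\ghat}$ as $N^{-1}$ times a finite sum of scalar bilinear forms $\bm\phi^*Q_1 G Q_2\bm\psi$ (one resolvent) and $\bm\phi^*Q_1 G\tbhat\ghat Q_2\bm\psi$ (two resolvents), with $\bm\phi,\bm\psi\in\{\ri,\tbhat\ri\}$ and $Q_j\in\{I,\tbhat\}$. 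The Woodbury identity applied to $R_iG^{-1}R_i=\ghat^{-1}+(R_iAR_i-A)$ expresses $G$ exactly through $\ghat$ plus a rank-$\le 2$ correction governed by the inverse of an explicit $2\times 2$ matrix, so all $G$-bilinear forms reduce to $\ghat$-bilinear forms; the single-resolvent ones are then $O_\prec(1)$ by Lemma~\ref{previous_bound} and contribute $O_\prec(N^{-1})$. The two-resolvent forms are the crux: writing $\ri=l_i(\ei+\hi)$ one isolates the $\ei$-directional part, which is delicate because $\tbhat\ei=b_i\ei$ makes $\ei$ an exact eigenvector of $A+\tbhat$ with eigenvalue $a_i+b_i$ — possibly within $\eta$ of $E$ — so that $\ei$-directional two-resolvent scalars are a priori only $O(\eta^{-2})$; however, since the Woodbury expansion is an identity and its left-hand side (a bilinear form of the full resolvent $G$) is free of this spurious eigenvalue, the $\ei$-directional contributions on the right, and the $\ei$--$\hi$ cross terms as well, cancel identically. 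What remains are the purely $\hi$-directional two-resolvent forms $\hi^*M\hi$, with $M$ a product of two $\ghat$'s interlaced with $\tbhat$'s. Since $\hi\approx N_{\C}(0,N^{-1}I_N)$ is independent of $\ghat$, the Hanson--Wright inequality gives $\hi^*M\hi=\ud M+O_\prec(N^{-1}\|M\|_{\mathrm{HS}})$, where $\ud M=O_\prec(\eta^{-1})$ (by $\tbhat\ghat=I-(A-z)\ghat$ and $\|\ghat\|_{\mathrm{HS}}^2\sim N/\eta$) and $\|M\|_{\mathrm{HS}}\le\|\tbhat\|_{\mathrm{op}}^2\|\ghat\|_{\mathrm{op}}\|\ghat\|_{\mathrm{HS}}=O_\prec(N^{1/2}\eta^{-3/2})$, the Hilbert--Schmidt bound coming from the Ward identity $\|\ghat\|_{\mathrm{HS}}^2=\eta^{-1}\Im\Tr\ghat$ together with Theorem~\ref{local} and \eqref{imaginary_bound}. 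Hence each two-resolvent form is $O_\prec(\eta^{-1}+N^{-1/2}\eta^{-3/2})$ and contributes $O_\prec((N\eta)^{-1}+N^{-3/2}\eta^{-3/2})\prec(N\eta)^{-1}$ on $D_{bulk}$, the last step using $N^{-3/2}\eta^{-3/2}(N\eta)=(N\eta)^{-1/2}\to 0$. This gives $|\ud{\tb G}-\ud{\tbhat\ghat}|\prec(N\eta)^{-1}$; the passages $\ud{\tbhat\ghat}\to\ud{\tbhat\gbra}$ (the rank-$\le 2$ change $\ghat\to\gbra$) and $\ud{\tb G\tb}\to\ud{\tbhat\gbra\tbhat}$ (insert $\tb$, respectively $\tbhat$, on both sides of the trace) are identical in structure, and treating $\ud{(\tb-\tbhat)G}$ likewise, this proves \eqref{finite_rank_1}.

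\emph{Proof of \eqref{trivial_4} and \eqref{concentrate_4}.} By \eqref{finite_rank_1} and Theorem~\ref{local}, $\ud{\tbhat\gbra}=\ud{\tb G}+O_\prec((N\eta)^{-1})=(z-\omega_B(z))m_{fc}(z)+O_\prec(\Psi^2)=O_\prec(1)$, and $\ud{\tbhat\gbra\tbhat}=\ud{\tb G\tb}+O_\prec((N\eta)^{-1})$ with $\ud{\tb G\tb}=z+\ud{(A-z)^2G}=O_\prec(1)$, the latter from the identity $\tb G\tb=\tb-(A-z)+(A-z)G(A-z)$ and the averaged local law \eqref{average} with $d_i=(a_i-z)^2$; this is \eqref{trivial_4}. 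For \eqref{concentrate_4}: $\ghat$ is $\gi$-independent, so $\E_{\gi}\ud{\tbhat\ghat}=\ud{\tbhat\ghat}$ and hence $\IE\ud{\tbhat\gbra}=\IE\ud{\tbhat(\gbra-\ghat)}$; but $\ud{\tbhat(\gbra-\ghat)}\prec(N\eta)^{-1}$ holds together with all its moments by the proof of \eqref{finite_rank_1}, so its $\gi$-conditional expectation obeys the same bound and $|\IE\ud{\tbhat\gbra}|\prec(N\eta)^{-1}$; the same argument gives $|\IE\ud{\tbhat\gbra\tbhat}|\prec(N\eta)^{-1}$.

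\emph{Main obstacle.} The hard part is the two-resolvent forms: one must organize the finite-rank expansion so that the exact cancellation of the ``spurious-eigenvalue'' ($\ei$-directional) contributions is manifest — these are $O(\eta^{-2})$ term by term and would otherwise produce a fatal $N^{-1}\eta^{-2}\not\prec(N\eta)^{-1}$ — and one must bound the surviving fluctuations by the Ward-identity Hilbert--Schmidt estimate $\|\ghat\|_{\mathrm{HS}}^2\sim N/\eta$ rather than the crude $\|\ghat\|_{\mathrm{HS}}^2\le N\eta^{-2}$, which would yield only $N^{-3/2}\eta^{-2}\not\prec(N\eta)^{-1}$ once $\eta\ll N^{-1/2}$. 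A subsidiary point is the non-degeneracy of the $2\times 2$ Woodbury matrix needed to pass from $G$ to $\ghat$ (and from $\gbra$ to $\ghat$), which is handled using the subordination stability \eqref{delta_bound} and the concentration bounds \eqref{concentrate_1} of Lemma~\ref{lemma_concentrate}.
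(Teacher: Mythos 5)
The paper does not prove this lemma itself --- it imports it as Lemma 6.2 of \cite{eta1} --- but its Appendix proves the exact two-spectral-parameter analogue, Lemma \ref{finite_rank_2}, and that proof shows the intended (and much shorter) route, which you do not take. For the resolvent change one applies the deterministic trace inequality Lemma \ref{trace_rank}: $H=A+\tb$, $H^{(i)}=A+\tbbra$ and $H^{\li}=A+\tbhat$ all differ pairwise by Hermitian perturbations of rank at most four, so $|\ud{\tbhat G}-\ud{\tbhat\gbra}|\le C\|\tbhat\|_{\mathrm{op}}/(N\eta)$ in one line, with no spurious eigenvalue, no Hanson--Wright, and no cancellation. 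For the multiplier change one writes $\tb-\tbhat=-\ri\ri^*\tbhat-\tbhat\ri\ri^*+(\ri^*\tbhat\ri)\ri\ri^*$ and observes that $\ud{(\tb-\tbhat)G}$ is $N^{-1}$ times a few \emph{single}-resolvent bilinear forms, each $O(\eta^{-1})$ by operator norms alone, hence already $O((N\eta)^{-1})$. Your treatment of \eqref{trivial_4} and \eqref{concentrate_4} is essentially the paper's (compare the identity $\IE\,\ud{\tbhat\fbra\tbbra\gbra}=\IE[\ud{\tbhat\fbra\tbbra\gbra}-\ud{\tbhat\fhat\tbhat\ghat}]$ in the proof of \eqref{concentrate_5}) and is correct; in fact, since the comparison of $\ud{\tbhat\gbra}$ with $\ud{\tbhat\ghat}$ via Lemma \ref{trace_rank} is deterministic, \eqref{concentrate_4} is immediate.

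The genuine gap is in your proof of \eqref{finite_rank_1}. By using $G-\ghat=-\ghat(\tb-\tbhat)G$ and then fully Woodbury-expanding $G$ through $\ghat$, you generate two-resolvent scalars in which both $\ghat$ factors can land on $\ei$; since $\ghat\ei=(a_i+b_i-z)^{-1}\ei$ and $a_i+b_i$ may lie within $\eta$ of $E$, terms such as $\ei^*\tbhat\ghat\tbhat\ghat\ei=b_i^2(a_i+b_i-z)^{-2}$ are genuinely of size $\eta^{-2}$, i.e.\ $N^{-1}\eta^{-2}\not\prec(N\eta)^{-1}$. You assert that these contributions, together with the $\ei$--$\hi$ cross terms, ``cancel identically,'' but you never exhibit the cancellation, and it is exactly the delicate point: the paper itself, when it must work with $\ghat$ in the proof of Lemma \ref{concentrate_2}, explicitly flags that $G^{\li}_{ii}=(a_i+b_i-z)^{-1}$ ``is not stable for $z\in D_{bulk}$'' and circumvents it by introducing the regularized resolvent $G^{\bi}$ with the shift $-(b_i+\omega_B(z)-z)\ei\ei^*$, rather than by any cancellation. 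Moreover your Woodbury denominators $1+\Xi_i$ themselves contain $\ghat_{ii}$ and can be of size $\eta^{-1}$ in the dangerous regime, so invoking \eqref{delta_bound} (a statement about the subordination Jacobian, not about this $2\times2$ matrix) and \eqref{concentrate_1} does not control them. As written, the argument for \eqref{finite_rank_1} is not complete; it becomes unnecessary altogether once Lemma \ref{trace_rank} is used for the resolvent comparison.
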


The strategy in \cite{eta1} to prove the local laws (\ref{local_law_lemma}) is to use Gaussian integration by parts in combination with Lemma \ref{lemma_concentrate} to find a pair of equations for $S_i^{[1]}$ and $T_i^{[1]}$, and thus obtain the following estimates:
\begin{equation}\label{bound_S_1}
S_i^{[1]}=-\frac{z-\omega_B(z)}{a_i-\omega_B(z)}+O_{\prec}(\Psi);  \qquad T_i^{[1]}=O_{\prec}(\Psi).
\end{equation}
In this section, we extend this technique to deal with the quantity in (\ref{K_B_formula}),
$$K_{B,2}(z_1,z_2):=\frac{1}{N} \sum_{j=1}^N \frac{1}{a_j-\omega_B(z_1)} (G(z_2) \tb G(z_1))_{jj} .$$
It suffices to find a local law for the two point function $(G(z_2) \tb G(z_1))_{jj}$, as well as $(\tb G(z_2) \tb G(z_1))_{jj}$. For simplicity, recall the shorthands (\ref{le shorthands}),
$$F \equiv G(z_2)\,,\qquad G \equiv G(z_1)\,.$$
In addition, for $z_1=E_1+\ii\eta_1$, $z_2=E_2+\ii\eta_2 \in \C \setminus \R$, we define two control parameters
\begin{equation}\label{control_para}
\Xi_1\equiv \Xi_1(z_1,z_2):= \frac{1}{\sqrt{N |\eta_1|}| \eta_2|} +\frac{1}{\sqrt{N |\eta_2|} |\eta_1|}; \qquad \Xi_2 \equiv \Xi_2(z_1,z_2):=\frac{1}{N \eta_1^2}+\frac{1}{N |\eta_1\eta_2|}.
\end{equation}
The following lemma, whose proof is given in the Appendix, ensures that one can replace $F, G$ by $\fbra$ and $\gbra$ respectively with affordable price.
\begin{lemma}\label{remove}
For all $1 \leq i,j \leq N$ and $z_1, z_2 \in D_{bulk}$, we have
\begin{equation}\label{removei}
\Big| (F \tb G)_{jj}-( F^{(i)}\tb^{(i)} G^{(i)})_{jj}\Big| , \quad \Big| (\tb F \tb G)_{jj}-(\tbbra F^{(i)}\tb^{(i)} G^{(i)})_{jj}\Big| \prec \Xi_1.
\end{equation}
The same holds true for all $z_1, \overline{z_2} \in D_{bulk}$.
\end{lemma}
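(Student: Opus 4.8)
The proof will use a finite-rank (resolvent) expansion, extending the argument behind Lemma~4.1 of \cite{eta1}. The starting observation is that $\tb-\tbbra$ is a small, low-rank perturbation. Writing $\ri=\wi+\bm{\delta}_i$ with $\bm{\delta}_i:=d_1\ei+d_2\gi$ and $|d_1|,|d_2|=O_\prec(N^{-1/2})$ by \eqref{bound_d}, we have
$$R_i-W_i=\wi\wi^*-\ri\ri^*=-\big(\wi\bm{\delta}_i^*+\bm{\delta}_i\wi^*+\bm{\delta}_i\bm{\delta}_i^*\big),$$
and therefore, since $\tb=R_i\tbhat R_i$ and $\tbbra=W_i\tbhat W_i$,
$$\tb-\tbbra=(R_i-W_i)\tbhat R_i+W_i\tbhat(R_i-W_i).$$
This matrix has rank at most four and satisfies $\|\tb-\tbbra\|_{\mathrm{op}}=O_\prec(N^{-1/2})$ (while $\|\tbbra\|_{\mathrm{op}}=O_\prec(1)$), and each rank-one summand in its expansion carries a factor $\bm{\delta}_i$ or $\bm{\delta}_i^*$.

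First I would reduce the two differences in \eqref{removei} to finite sums of elementary terms. Telescoping gives
$$(F\tb G)_{jj}-(\fbra\tbbra\gbra)_{jj}=\big[(F-\fbra)\tb G\big]_{jj}+\big[\fbra(\tb-\tbbra)G\big]_{jj}+\big[\fbra\tbbra(G-\gbra)\big]_{jj},$$
and applying the resolvent identities $F-\fbra=-\fbra(\tb-\tbbra)F$ and $G-\gbra=-\gbra(\tb-\tbbra)G$ to the first and third terms expresses the left-hand side as a finite sum of terms of the schematic form $\ej^*M_1(\tb-\tbbra)M_2\ej$, where each $M_k$ is a product of at most two of the resolvents $F,\fbra,G,\gbra$ interlaced with at most one power of $\tb$ or $\tbbra$. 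The second difference $(\tb F\tb G)_{jj}-(\tbbra\fbra\tbbra\gbra)_{jj}$ is handled identically, giving an analogous (slightly longer) expansion.

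Into each elementary term I would substitute the rank-one decomposition of $\tb-\tbbra$, which factorizes it as a product of scalars $\bm{x}^*M\bm{y}$ with $\bm{x},\bm{y}\in\{\ei,\ej,\gi,\wi,\tbhat\ei,\tbhat\gi\}$, exactly one of which inherits a coefficient $d_1$ or $d_2$, hence a factor $O_\prec(N^{-1/2})$; here each $M$ is a product of at most two resolvents together with powers of $\tb,\tbbra$. These scalars I would bound using the entrywise local laws of Theorem~\ref{local} and the isotropic bounds and identities of Lemma~\ref{previous_bound} (applied at both $z_1$ and $z_2$, and transferred from $\gbra,\fbra$ to $G,F$ by Lemma~4.1 of \cite{eta1}) to control quantities such as $\gbra_{jj}$, $(\tbbra\gbra)_{jj}$, $\gi^*\gbra\ej$, $\ej^*\fbra\gi$, $\gi^*\tbhat\gi$ and their two-resolvent analogues, together with the Ward identity $G(z)G(z)^*=\Im G(z)/\Im z$ (and its complex-conjugate analogue), which replaces a ``naive'' norm factor $|\eta_\bullet|^{-1}$ by the improved $|\eta_\bullet|^{-1/2}$ whenever a resolvent is paired against one of these controlled vectors. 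Collecting the $N^{-1/2}$, $|\eta_1|^{-1/2}$, $|\eta_2|^{-1/2}$, $|\eta_1|^{-1}$ and $|\eta_2|^{-1}$ contributions term by term, one finds that each term is $O_\prec(\Xi_1)$ with $\Xi_1$ as in \eqref{control_para}, uniformly in $i$ and $j$; the case $z_1,\overline{z_2}\in D_{bulk}$ follows in the same way using the conjugate versions of the local laws and Ward identities.

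The main obstacle, I expect, is this bookkeeping of the Ward improvements rather than any individual estimate: since $F\tb G$ is a two-point function in the two \emph{distinct} spectral parameters $z_1,z_2$, one cannot estimate operator norms of the products but must pair each resolvent factor with a suitable vector, and because some of the elementary terms contain up to three resolvent factors the delicate point is to verify that in every term at least two of them receive the $|\eta_\bullet|^{-1/2}$ gain. This forces the use of the isotropic (not merely entrywise) local laws and a careful choice of which inner products to form; once it is organized, the remaining steps are routine applications of the results collected in Section~\ref{sec_preliminaries} and the estimates of \cite{eta1}.
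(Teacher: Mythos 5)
Your proposal takes a genuinely different route from the paper, and I believe it has a gap precisely at the point you flag as the ``main obstacle.''

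The paper does not telescope $F\tb G-\fbra\tbbra\gbra$ directly. Instead it first uses the resolvent definition to write $F\tb G=z_1FG-FAG+F$ (and analogously with superscript~$(i)$), so that
\begin{equation*}
F\tb G-\fbra\tbbra\gbra=\frac{z_1}{z_2-z_1}\big(F-\fbra-G+\gbra\big)+(F-\fbra)-\big(FAG-\fbra A\gbra\big)\,,
\end{equation*}
where the resolvent identity $FG=(F-G)/(z_2-z_1)$ has been used. (The paper also reduces the second bound in \eqref{removei} to the first by the analogous algebraic identity $(\tb F\tb G)_{jj}-(\tbbra\fbra\tbbra\gbra)_{jj}=(z_2-a_j)\big[(F\tb G)_{jj}-(\fbra\tbbra\gbra)_{jj}\big]+(\tb G-\tbbra\gbra)_{jj}$, using that $A$ is diagonal.) The effect of this step is to replace the random matrix $\tb$ sitting \emph{between} the two resolvents by the deterministic, diagonal matrix $A$. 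After that, every elementary scalar in the perturbative expansion of $FAG-\fbra A\gbra$ has the form $\bm{x}^*\fbra A G\bm{y}$ (or shorter), with $\bm{x},\bm{y}\in\{\ei,\gi,\tbhat\ei,\tbhat\gi,\dots\}$, and Cauchy--Schwarz gives a Ward gain on \emph{both} resolvent factors: $|\bm{x}^*\fbra A G\bm{y}|\le\|\fbra\bm{x}\|_2\,\|A\|_{\mathrm{op}}\,\|G\bm{y}\|_2\prec(\eta_1\eta_2)^{-1/2}$, which is what is needed.

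Your direct telescope keeps $F\tb G$ around, and after one application of $F-\fbra=-\fbra(\tb-\tbbra)F$ to the first summand you produce elementary scalars of the schematic form
\begin{equation*}
\big(\ej^*\fbra(d_1\ei+d_2\gi)\big)\cdot\big(\wi^*\tbhat W_i\,F\tb G\,\ej\big)\,.
\end{equation*}
The first factor is $O_\prec\!\big((N\eta_2)^{-1/2}\big)$ by Ward applied to $\fbra\ej$. For the second factor, however, Cauchy--Schwarz yields only $\|W_i\tbhat\wi\|_2\,\|F\tb G\ej\|_2\prec\frac{1}{\eta_2\sqrt{\eta_1}}$, i.e.\ the full operator norm of $F$ appears because the pairing vector on the $F$ side is $W_i\tbhat\wi$, not one of the elementary vectors $\ei,\gi$ covered by Lemma~\ref{previous_bound}. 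The product is $O_\prec\!\big(\frac{1}{\sqrt{N\eta_1}\,\eta_2^{3/2}}\big)$, which is larger than $\Xi_1$ by $\eta_2^{-1/2}$. Recovering the missing Ward gain on the $F$ side would require an isotropic two-resolvent estimate such as $|\wi^*\tbhat\,\Im F\,\tbhat\wi|\prec 1$, equivalently $\|F^*\tbhat\wi\|_2\prec\eta_2^{-1/2}$. This is not among the ``isotropic bounds and identities of Lemma~\ref{previous_bound}'' (which control $\xii^*Q_1\gbra Q_2\yi$ for $\gbra,\fbra$, not for the full $G,F$, and not two-resolvent products with $\tb$ in the middle), and a naive bootstrap using $F-\fbra=-\fbra(\tb-\tbbra)F$ only yields it for $\eta_2\gg N^{-1/2}$, whereas the lemma is needed down to $\eta_2\gtrsim N^{-1+\epsilon}$. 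So the bookkeeping you defer to as ``routine once organized'' is exactly where the argument fails; the resolvent-definition trick that converts $\tb$ to $A$ in the middle slot is what makes the paper's estimate close, and it is the step your proposal is missing.
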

With Lemma \ref{remove}, we can reduce the problem to study the approximation
$$\widehat{K}_{B,2}(z_1,z_2):= \frac{1}{N} \sum_{i=1}^N \frac{1}{a_i-\omega_B(z_1)} (\fbra \tbbra \gbra)_{ii}.$$
Coming in pair with $(\fbra \tbbra \gbra)_{ii}$, we will also study $(\tbbra \fbra \tbbra \gbra)_{ii}$. Note that
\begin{align}\label{approximate_qq}
\Big( \tb^{(i)} F^{(i)}\tb^{(i)} G^{(i)}\Big)_{ii}=&\ei^* (1-\ei^*\ei-\gi^*\ei-\ei^*\gi-\gi^*\gi) \tb^{\<i\>} (1-\wi^*\wi) F^{(i)} \tbbra G^{(i)}\ei \nonumber\\
=&-\gi^* \tb^{\<i\>}F^{(i)}\tb^{(i)} G^{(i)} \ei +O_{\prec}\Big( \frac{1}{\sqrt{N |\eta_1 \eta_2|}} \Big).
\end{align}
The last step follows from Lemma \ref{previous_bound} and from the Cauchy-Schwarz inequality, i.e.,
\begin{equation}\label{trivial_1}
|\wi^* \fbra \tbbra \gbra \ei | \leq \|\tbbra\|_{\mathrm{op}} \|\wi^* \fbra \|_2 \|\gbra \ei\|_2 =O_{\prec}\Big( \frac{1}{\sqrt{|\eta_1 \eta_2|}}\Big).
\end{equation}
Combining with Lemma \ref{remove}, we have 
\begin{equation}\label{approximate}
(\tb F \tb G)_{ii}=(\tbbra \fbra \tbbra \gbra)_{ii}+O_{\prec}(\Xi_1):=-S_i^{[2]}+O_{\prec}(\Xi_1),
\end{equation}
where  we define the following two point functions for simplicity
\begin{equation}\label{define_S_two}
S_i^{[2]}:=\gi^* \tb^{\<i\>}F^{(i)}\tb^{(i)} G^{(i)} \ei, \qquad T_i^{[2]}:=\gi^* F^{(i)}\tbbra G^{(i)} \ei.
\end{equation}
It is hence enough to look at $S^{[2]}_i$. Using Lemma \ref{previous_bound}, it is straightforward to check the crude bound
\begin{equation}\label{trivial_2}
|S_i^{[2]}|, \quad |T_i^{[2]}| \prec \frac{1}{\sqrt{|\eta_1\eta_2|}}.
\end{equation}
As an analogue of Lemma \ref{lemma_concentrate}, we have the following concentration results for the two point functions $S^{[2]}_i$, $T_i^{[2]}$ and $(\fbra \tbbra \gbra )_{ii}$. 
\begin{lemma}\label{concentrate_2}
The following hold uniformly for all $z_1,z_2 \in D_{bulk}$ or $z_1,\overline{z_2} \in D_{bulk}$:
$$|\IE S_i^{[2]}| \prec \Xi_1; \qquad |\IE T_i^{[2]}| \prec \Xi_1; \qquad |\IE (\fbra \tbbra \gbra )_{ii}| \prec \Xi_1.$$
\end{lemma}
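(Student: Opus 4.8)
The goal is to prove concentration of the two-point quantities $S_i^{[2]}$, $T_i^{[2]}$ and $(\fbra\tbbra\gbra)_{ii}$ around their partial expectations $\E_{\gi}$ with the sharp rate $\Xi_1$. The natural tool is the same Gaussian concentration machinery that was used in~\cite{eta1} to prove Lemma~\ref{lemma_concentrate}, but now applied to quadratic forms in $\gi$ whose ``coefficient matrices'' themselves depend on $\gi$ through $\tb^{(i)}=W_i\tbhat W_i$ and through $F^{(i)}, G^{(i)}$. So the first thing I would do is record the precise dependence of each quantity on $\gi$, using $\wi=\ei+\gi$, and organize the proof so that the remaining $\gi$-dependence after extracting the explicit quadratic form is ``already small'' in the sense that its fluctuations are of lower order.

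Concretely, here is the plan. First I would treat $(\fbra\tbbra\gbra)_{ii}$; by the computation leading to~\eqref{approximate} (i.e.\ expanding $W_i=I-\wi\wi^*$ and using Cauchy--Schwarz as in~\eqref{trivial_1}), this equals $-S_i^{[2]}$ up to an error of size $O_\prec(1/\sqrt{|\eta_1\eta_2|})$ times lower-order factors, but more importantly the difference $(\fbra\tbbra\gbra)_{ii}-(-S_i^{[2]})$ is itself a sum of quadratic/linear forms in $\gi$ with small coefficients; so it suffices to control $\IE S_i^{[2]}$ and $\IE T_i^{[2]}$ and then propagate. Second, for $S_i^{[2]}=\gi^*\tbhat\fbra\tbbra\gbra\ei$ I would substitute $\tbbra=\tbhat-\tbhat\wi\wi^*-\wi\wi^*\tbhat+\wi\wi^*\tbhat\wi\wi^*$ and, using Lemma~\ref{previous_bound} together with the identities $\ei^*\wi=1+g_{ii}$, $\gi^*\wi=\gi^*\gi+g_{ii}$ and the crude bounds~\eqref{trivial}, reduce $S_i^{[2]}$ modulo $O_\prec(\Xi_1)$ to the ``clean'' quadratic form $\gi^*\tbhat\fbra\tbhat\gbra\ei$ plus terms that are $\gi$-measurable products of already-concentrated scalars (such as $\gi^*\tbhat\gbra\ei$, $\gi^*\fbra\tbhat\gbra\ei$, $\ei^*\tbhat\gbra\ei$, etc.) times the small factors $d_1, d_2$ or $g_{ii}$, $\gi^*\gi-1$. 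The point is that $\gbra$, $\fbra$ and $\tbhat$ are all independent of $\gi$, so the clean form $\gi^*M\ei$ with $M=\tbhat\fbra\tbhat\gbra$ (or, for the paired quantity, $M=\fbra\tbhat\gbra$ and also the matrix $\tbhat$ sandwiched versions) is a genuine \emph{linear} functional of $\gi$ with deterministic-given-$U^{\li}$ coefficient vector, and a standard Gaussian concentration estimate (Hanson--Wright / the large deviation bounds of~\cite[Section 4]{eta1}) gives $|\gi^*M\ei-\E_{\gi}\gi^*M\ei|\prec \|M\ei\|_2/\sqrt N + \|M\|_{\mathrm{HS}}/N$; the operator and Hilbert--Schmidt norms of $M$ are controlled by $\|F^{(i)}\|_{\mathrm{op}}\|G^{(i)}\|_{\mathrm{op}}\lesssim 1/|\eta_1\eta_2|$ and $\|M\|_{\mathrm{HS}}^2\lesssim \Tr(|F^{(i)}|^2)\,\|\cdots\|/\ldots$, which after inserting the Ward identity $\Tr|G^{(i)}(z)|^2 = \Im\Tr G^{(i)}(z)/\Im z \lesssim N/|\eta|$ yields exactly the two pieces $\tfrac{1}{\sqrt{N|\eta_1|}|\eta_2|}$ and $\tfrac{1}{\sqrt{N|\eta_2|}|\eta_1|}$ of $\Xi_1$. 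The linear-in-$\gi$ terms $\gi^*(\text{vector})$ are handled the same way but more easily.

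After that, the remaining job is bookkeeping: show that every term I discarded along the way is either (a) a product of a scalar that is $O_\prec(1/\sqrt N)$ (namely $d_1, d_2, g_{ii}, \gi^*\gi-1$, or one of the entries from~\eqref{trivial}) with a scalar that is $O_\prec(1/\sqrt{|\eta_1\eta_2|})$ from Lemma~\ref{previous_bound}, hence $O_\prec(\Xi_2)\le O_\prec(\Xi_1)$; or (b) already centered with respect to $\gi$ up to $O_\prec(\Xi_1)$ by Lemma~\ref{lemma_concentrate} (for the one-point building blocks $S_i^{[1]}, T_i^{[1]}, G^{(i)}_{ii}$) and~\eqref{concentrate_4} (for the normalized traces $\ud{\tbhat\gbra}$ etc.). A subtle point is that $\IE(\text{product})\ne (\IE)(\IE)$, so when I factor, say, $\gi^*\gbra\ei\cdot(\text{something }\gi\text{-dependent})$ I must expand $\IE(XY)=X\,\IE Y+(\IE X)\E_{\gi}Y$ and bound each piece using the crude bounds~\eqref{trivial_2},~\eqref{trivial} on one factor and the concentration bound on the other; this is routine but must be done carefully to avoid losing a power of $\eta$.

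The main obstacle I anticipate is \emph{not} the clean Gaussian form but precisely this: controlling the Hilbert--Schmidt norm $\|M\|_{\mathrm{HS}}$ of the three-fold product $M=\tbhat F^{(i)}\tbhat G^{(i)}$ (and similar) sharply enough to get $\Xi_1$ rather than something worse like $1/(\sqrt N|\eta_1\eta_2|)$. This requires using that $\tbhat$ is bounded in operator norm and inserting the resolvent Ward identity on whichever of $F^{(i)}, G^{(i)}$ carries the Hilbert--Schmidt trace, and being careful that the cross term $\|M\ei\|_2\le \|\tbhat\|_{\mathrm{op}}^2\|F^{(i)}\|_{\mathrm{op}}\|G^{(i)}\ei\|_2 \lesssim 1/(|\eta_1|\sqrt{|\eta_2|})$ (using $\|G^{(i)}(z_2)\ei\|_2^2 = (G^{(i)*}G^{(i)})_{ii}(z_2)\lesssim 1/|\eta_2|$ from the local law, or the trivial bound) combines with the $1/\sqrt N$ from Gaussian concentration to give the first term of $\Xi_1$, and symmetrically for the second. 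Verifying these norm bounds uniformly in $i$ and for $z_1, z_2$ in the same or opposite half-planes (so that $\|G^{(i)}(z_1)\|_{\mathrm{op}}\|G^{(i)}(z_2)\|_{\mathrm{op}}$ is the only size that enters, and no cancellation is needed) is where the real work lies; once that is in place, the concentration statement follows from the Gaussian large-deviation bounds exactly as in~\cite{eta1}.
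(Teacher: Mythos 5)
The proposal has a genuine gap in its central step. You write that ``$\gbra$, $\fbra$ and $\tbhat$ are all independent of $\gi$, so the clean form $\gi^*M\ei$ with $M=\tbhat\fbra\tbhat\gbra$ [\dots] is a genuine linear functional of $\gi$ with deterministic-given-$U^{\li}$ coefficient vector.'' This is false: $G^{(i)}(z)=(A+W_i\tbhat W_i-z)^{-1}$ and $F^{(i)}$ both depend on $\gi$ through $W_i=I-\wi\wi^*$ with $\wi=\ei+\gi$. Only $\tbhat$ and the resolvent $G^{\li}=(A+\tbhat-z)^{-1}$ are $\gi$-independent. So the coefficient matrix $M$ in your ``clean'' form is random with respect to $\gi$, and the Gaussian large-deviation bound of Lemma~\ref{gaussian_deviation} does not apply to $\gi^*M\ei$ directly. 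You in fact flag exactly this issue at the start (``coefficient matrices themselves depend on $\gi$''), but then treat it as resolved without doing the work.

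This is precisely the difficulty the paper's proof is built around, and there is a second, related obstruction you do not anticipate. One cannot simply replace $G^{(i)}$ by the $\gi$-independent $G^{\li}$, because $G^{\li}_{ii}(z)=1/(a_i+b_i-z)$ is unstable on $D_{bulk}$ when $a_i+b_i$ is near $E$. The paper's remedy is to (i) regularize by passing to $G^{\bi}$, defined via $H^{\bi}=A+\tb^{(i)}-(b_i+\omega_B(z)-z)\ei\ei^*$, which restores $O(1)$ control of $G^{\bi}_{ii}$; then (ii) pass to the genuinely $\gi$-independent $G^{\rir}$, defined via $H^{\rir}=A+\tbhat-(b_i+\omega_B(z)-z)\ei\ei^*$, using the rank-two structure of $H^{\bi}-H^{\rir}=\wi\si^*+\ti\wi^*$ and the explicit double rank-one expansion of Lemma~\ref{le lemma twice rank one}, $G^{\bi}=G^{\rir}+\Pi/(1+\Xi_i)$. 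Only at this stage can one apply Lemma~\ref{gaussian_deviation} to forms like $\gi^*Q^{\li}F^{\rir}\tbhat G^{\rir}Q^{\li}\gi$ with coefficient matrix independent of $\gi$, combine with the Ward identity~\eqref{ward_identity_1} for the Hilbert--Schmidt norm, and use the algebraic identity $\IE(XY)=\IE(\IE X\,\IE Y)+\IE X\,\E_{\gi}Y+\E_{\gi}X\,\IE Y$ from~\eqref{triangle} to propagate concentration through the products in~\eqref{Pi_ii} and~\eqref{hahaha}. Your norm and Ward-identity calculations at the end are on the right track, but they are applied to the wrong matrix; without the $G^{\rir}$ reduction and the regularization, the argument as written does not close.
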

The proof can be found in the Appendix. Together with these concentration results, we use Gaussian integration by parts to find a pair of linear equations of $\E_\gi S_i^{[2]}$ and $\E_\gi T_i^{[2]}$, and then solve for $\E_\gi S_i^{[2]}$. Note that if $g \sim {N}_\C(0,\sigma^2)$, since $g$ and $\bar{g}$ are independent, then we have the formula of integration by parts, 
\begin{equation}\label{integration_by_parts_complex}
\int_{\C} \bar{g} f(g,\bar{g}) e^{-\frac{|g|^2}{\sigma^2}} \dd g  \wedge \dd \bar{g}=\sigma^2 \int_{\C} \partial_{g} f(g,\bar{g}) e^{-\frac{|g|^2}{\sigma^2}} \dd g  \wedge \dd \bar{g}.
\end{equation}
By direct computation and 
\begin{equation}\label{derivative_R_complex}
\frac{\partial W_i}{\partial g_{ik} }= -\ek (\ei+\gi)^*,
\end{equation}
we obtain that
\begin{align}\label{counterpart}
\E _{\gi} S^{[2]}_i=&\E_{\gi}\Big[ \underline{\tbhat \fbra} \Big( S_i^{[2]} -b_i T_i^{[2]} +(b_i \gi^* \ei+\gi^* \tbhat \gi) ( (\fbra \tbbra \gbra)_{ii}+T^{[2]}_i) \Big)\nonumber \\
&+\underline{\tbhat \fbra W_i \tbhat} \Big( (\fbra \tbbra \gbra)_{ii} +T_i^{[2]} \Big)\nonumber\\
&+(\underline{\tbhat \fbra \tbbra \gbra} - \underline{\tbhat \fbra}) \Big( S_i^{[1]} -b_i T_i^{[1]} +(b_i \gi^* \ei+\gi^* \tbhat \gi) ((\gbra)_{ii} +T_i^{[1]}) \Big) \nonumber\\
& +(\underline{\tbhat \fbra \tbbra W_i \gbra \tbhat}-\underline{\tbhat \fbra W_i \tbhat}) \Big( (\gbra)_{ii} +T_i^{[1]} \Big)\Big].
\end{align}
Note that $\|\fbra\|_{\mathrm{op}} \leq \frac{1}{\eta_2}$, which implies that
$$|\ud{\tbhat \fbra W_i \tbhat} -\ud{\tbhat \fbra \tbhat} |=\frac{1}{N} | \wi^* (\tbhat)^2 \fbra \wi| \leq \frac{1}{N} \|\tbhat\|^2_{\mathrm{op}} \|\fbra\|_{\mathrm{op}} \|\wi\|_2\prec \frac{1}{N |\eta_2|};$$ 
and similarly
$$|\ud{\tbhat \fbra \tbbra \gbra W_i \tbhat} -\ud{\tbhat \fbra \tbbra \gbra \tbhat} |=\frac{1}{N} |\wi^* (\tbhat)^2  \fbra \tbbra \gbra \wi| \prec \frac{1}{N |\eta_1 \eta_2|}.$$
In addition, we have the following lemma, with proof provided in the Appendix.
\begin{lemma}\label{finite_rank_2}
The following hold uniformly for all $z_{1}, z_2 \in D_{bulk}$ or $z_{1}, \overline{z_2} \in D_{bulk}$:
\begin{equation}\label{finite_rank_3}
|\ud{\tbhat \fbra \tbbra \gbra}-\underline{\tb F \tb G}|, \quad |\ud{\tbhat \fbra \tbbra \gbra \tbhat}-\underline{\tb F \tb G \tb}| \prec \Xi_2.
\end{equation}
Furthermore, we have
\begin{equation}\label{trivial_bounds}
|\ud{\tbhat \fbra \tbbra \gbra}|, \quad |\ud{\tbhat \fbra \tbbra \gbra \tbhat} |\prec \frac{1}{\sqrt{|\eta_1\eta_2|}}.
\end{equation}
In addition, we have
\begin{equation}\label{concentrate_5}
| \IE \underline{\tbhat \fbra \tbbra \gbra}|, \quad | \IE \underline{\tbhat \fbra \tbbra \gbra \tbhat}| \prec \Xi_2.
\end{equation}
\end{lemma}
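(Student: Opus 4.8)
The plan is to establish the three estimates of Lemma~\ref{finite_rank_2} by the finite-rank replacement scheme of~\cite{eta1}, carried through here for products of two resolvents at the two spectral parameters $z_1$ and $z_2$; the one-resolvent analogues of~\eqref{finite_rank_3} and~\eqref{concentrate_5} are precisely~\eqref{finite_rank_1} and~\eqref{concentrate_4}. The trivial bound~\eqref{trivial_bounds} comes for free from the Cauchy--Schwarz inequality for the Hilbert--Schmidt inner product: since $\|\tbhat\|_{\mathrm{op}},\|\tbbra\|_{\mathrm{op}}\le CM$,
\[
|\ud{\tbhat\fbra\tbbra\gbra}|\le\frac1N\|\tbhat\fbra\|_{\mathrm{HS}}\|\tbbra\gbra\|_{\mathrm{HS}}\le\frac{CM^2}{N}\|\fbra(z_2)\|_{\mathrm{HS}}\|\gbra(z_1)\|_{\mathrm{HS}},
\]
and $\|\gbra(z_1)\|_{\mathrm{HS}}^2=\sum_j|\mu^{(i)}_j-z_1|^{-2}=N\,|\Im m^{(i)}_N(z_1)|/|\eta_1|$, where $\mu^{(i)}_j$ are the eigenvalues of $H^{(i)}$ and $m^{(i)}_N:=\ud{\gbra}$; since $m^{(i)}_N(z_1)=m_{fc}(z_1)+O_\prec((N|\eta_1|)^{-1/2})$ on $D_{bulk}$ by Lemma~\ref{previous_bound} and $|m_{fc}|\le C$ by Lemma~\ref{bound_bulk}, we get $\|\gbra(z_1)\|_{\mathrm{HS}}\le C\sqrt{N/|\eta_1|}$ and likewise $\|\fbra(z_2)\|_{\mathrm{HS}}\le C\sqrt{N/|\eta_2|}$, whence $|\ud{\tbhat\fbra\tbbra\gbra}|\prec|\eta_1\eta_2|^{-1/2}$; the version with the trailing $\tbhat$ is identical, at the cost of one more factor $\|\tbhat\|_{\mathrm{op}}\le M$.

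For~\eqref{finite_rank_3} I would telescope $\ud{\tb F\tb G}-\ud{\tbhat\fbra\tbbra\gbra}$, replacing one factor at a time: first the two $\tb$-slots by $\tbbra$, then $F$ by $\fbra$ and $G$ by $\gbra$, and finally the first $\tbbra$ by $\tbhat$, using $R_i=I-\ri\ri^*$, $W_i=I-\wi\wi^*$, the resolvent identities $G-\gbra=-G(\tb-\tbbra)\gbra$ and so on, and the decomposition $\ri=\wi+\bm\delta_i$ with $\bm\delta_i:=d_1\ei+d_2\gi$, $d_1,d_2=O_\prec(N^{-1/2})$, from~\eqref{h_approximate_r}--\eqref{bound_d}. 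After taking the normalized trace, each of the $\tb\to\tbbra$ and $F,G\to\fbra,\gbra$ steps produces corrections of the form $N^{-1}\langle G^{(\cdot)}(z_\ell)^*\bm b,\ \Pi\,\bm a\rangle$, where $\Pi$ is a product of at most four resolvents and $\tbhat$'s and, in each rank-one summand of $\tb-\tbbra$, one of $\bm a,\bm b$ equals $\bm\delta_i$ or $\tbhat\bm\delta_i$ (hence has norm $O_\prec(N^{-1/2})$) while the other is a unit-norm Gaussian-type vector; bounding the small factor by the delocalization estimate $\|G^{(\cdot)}(z_\ell)^*\bm\delta_i\|_2=O_\prec((N|\eta_\ell|)^{-1/2})$ and the unit-norm factor by the corresponding $|\eta_\ell|^{-1/2}$-bound --- both obtained from the Ward identity together with the bounds $\Im\gbra_{ii}=O_\prec(1)$ and $|\gi^*\tbhat^{\varepsilon}\gbra\tbhat^{\varepsilon}\gi|\prec1$ of Lemma~\ref{previous_bound} (and their analogues for $G$, $F$) --- every such correction is $O_\prec(\Xi_2)$. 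The final step $\tbbra\to\tbhat$ is not a small-norm perturbation, but $\ud{(\tbbra-\tbhat)\fbra\tbbra\gbra}$ reduces to $N^{-1}$ times quadratic forms $\wi^*\tbhat^{\varepsilon}\fbra\tbbra\gbra\wi$, which are $O_\prec(|\eta_1\eta_2|^{-1/2})$ by the same delocalization bounds, so it contributes $O_\prec(N^{-1}|\eta_1\eta_2|^{-1/2})=O_\prec(\Xi_2)$ as well. The point is that merely summing the \emph{entrywise} estimate of Lemma~\ref{remove} over the diagonal gives only $O_\prec(\Xi_1)$; the improvement to the averaged $O_\prec(\Xi_2)$ is exactly the extra $N^{-1}$ of the normalized trace, as in the passage from the entrywise bound~\eqref{removei_1} to the averaged bound~\eqref{finite_rank_1}. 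The bound for $\ud{\tbhat\fbra\tbbra\gbra\tbhat}$ versus $\ud{\tb F\tb G\tb}$ follows identically, with one more bounded factor $\|\tbhat\|_{\mathrm{op}}$.

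For the concentration estimate~\eqref{concentrate_5} I would push the same expansion all the way down to the $\gi$-independent quantities $\tbhat,\fhat,\ghat$ (with $\fhat:=(A+\tbhat-z_2)^{-1}$): since $\gbra-\ghat$ and $\fbra-\fhat$ are finite-rank, expanding them in a finite Neumann-type series yields terms involving one or two factors of $\wi=\ei+\gi$, and one obtains $\ud{\tbhat\fbra\tbbra\gbra}=\ud{\tbhat\fhat\tbhat\ghat}+\mathcal R$, where $\ud{\tbhat\fhat\tbhat\ghat}$ is measurable with respect to $U^{\langle i\rangle}$ alone --- so that $\IE\,\ud{\tbhat\fhat\tbhat\ghat}=0$ and hence $\IE\,\ud{\tbhat\fbra\tbbra\gbra}=\IE\,\mathcal R$ --- and $\mathcal R$ is a polynomial of bounded degree in the entries of $\gi$ whose coefficients are $\gi$-independent matrices of the form $\tbhat^{\varepsilon_1}\fhat\tbhat^{\varepsilon_2}\ghat\cdots$ contracted with $\ei$'s. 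Bounding the fluctuation of each monomial of $\mathcal R$ via the Gaussian estimates $|\gi^*X\gi-N^{-1}\Tr X|\prec\|X\|_{\mathrm{HS}}/N$ and $|\gi^*X\ei|\prec\|X\ei\|_2/\sqrt N$, together with the Hilbert--Schmidt and operator-norm bounds on the coefficient matrices established above, then gives $\IE\,\mathcal R=O_\prec(\Xi_2)$. This is the two-resolvent counterpart of the computations behind~\eqref{concentrate_4} and Lemma~\ref{lemma_concentrate}, and may be adapted from~\cite{eta1} with one extra resolvent.

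The step I expect to be the main obstacle is the bookkeeping in the expansions for~\eqref{finite_rank_3} and~\eqref{concentrate_5}, i.e.\ arranging matters so that each of the many correction terms lands in $\Xi_2$ rather than in the weaker $\Xi_1$. Purely operator-norm bounds are insufficient --- a careless estimate of a correction such as $\ud{\tbhat\fbra\tbbra G(\tb-\tbbra)\gbra}$ produces spurious powers of $|\eta_1|^{-1}$ and $|\eta_2|^{-1}$ beyond what $\Xi_2$ allows, in particular when $|\eta_1|\ll|\eta_2|$. The required gain has to come from the Ward-identity delocalization bounds (which replace factors $|\eta_\ell|^{-1}$ by $|\eta_\ell|^{-1/2}$), from systematically extracting the $O_\prec(N^{-1/2})$-small vector $\bm\delta_i$ in every $R_i\to W_i$ replacement, and from keeping $F$ and $G$ at their own spectral parameters, since $F\tb G$ cannot be collapsed by a resolvent identity because of the intervening $\tb$. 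This is precisely where the hypotheses $z_1,z_2\in D_{bulk}$ (or $z_1,\overline{z_2}\in D_{bulk}$), which keep $\Im m_{fc}$, $\Im\omega_A$, $\Im\omega_B$ bounded away from zero, are used; the case $z_1,\overline{z_2}\in D_{bulk}$ requires no new idea, using $\overline{G(z)}=G(\bar z)$ together with the validity of Theorem~\ref{local} and Lemma~\ref{previous_bound} in both half-planes.
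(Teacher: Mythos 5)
Your overall strategy is sound, but it takes a noticeably heavier route than the paper's own proof, and there is one small imprecision worth flagging.

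For \eqref{finite_rank_3}, the paper does \emph{not} perform a factor-by-factor telescoping with the $\ri=\wi+\bm\delta_i$ decomposition and Ward-identity delocalization bounds. Instead it splits the difference into $\ud{\tbhat(\fbra\tbbra\gbra-F\tb G)}$ plus $\ud{\tbhat F\tb G}-\ud{\tb F\tb G}$, then (i) uses the resolvent \emph{definition} $\tbbra\gbra=1-(A-z_1)\gbra$ and the two-variable resolvent identity to reduce $\fbra\tbbra\gbra-F\tb G$ to combinations $\fbra-F$, $\fbra A\gbra-FAG$, and $(z_2-z_1)^{-1}(\fbra-\gbra-F+G)$; (ii) applies the trace-perturbation Lemma~\ref{trace_rank} (with cyclicity) to each, which delivers the $(N\eta)^{-1}$ gain directly with only operator-norm bounds on the bounded sandwiching factors; and (iii) bounds $\ud{\tbhat F\tb G}-\ud{\tb F\tb G}$ by writing $\tbhat=R_i\tb R_i$, cycling, expanding $R_i=I-\ri\ri^*$, and using a crude operator-norm estimate $|\ri^*F\tb G\tb\ri|\prec(\eta_1\eta_2)^{-1}$. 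No $\bm\delta_i$ appears anywhere and no delocalization estimates are invoked; Lemma~\ref{trace_rank} is the workhorse and is the main tool missing from your plan. Your telescoping should also succeed --- the per-step bookkeeping that worries you does close, since every rank-one piece of $\tb-\tbbra$ carries at least one $O_\prec(N^{-1/2})$ vector and the final $\tbbra\to\tbhat$ step already carries the $N^{-1}$ from the normalized trace --- but it is genuinely more work than the published argument, and your own anticipation that the bookkeeping is ``the main obstacle'' is accurate.

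Two further points. First, your direct Cauchy--Schwarz proof of \eqref{trivial_bounds} on the hatted quantities is fine; the paper instead deduces \eqref{trivial_bounds} from \eqref{finite_rank_3} and the trivial bounds $|\ud{\tb F\tb G}|,|\ud{\tb F\tb G\tb}|\prec(\eta_1\eta_2)^{-1/2}$, which is the same Hilbert--Schmidt estimate applied to the unhatted quantities --- a cosmetic difference. Second, for \eqref{concentrate_5} your central idea (compare to the $\gi$-independent $\ud{\tbhat\fhat\tbhat\ghat}$, which has vanishing $\IE$) is exactly right, but the claim that the remainder $\mathcal R$ is ``a polynomial of bounded degree in the entries of $\gi$'' is inaccurate: the rank-one perturbation formula produces \emph{rational} functions of $\gi$ with denominators of the form $1+\wi^*\ghat\wi$, so a literal finite Neumann expansion does not exist. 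The paper sidesteps this entirely by simply bounding $|\ud{\tbhat\fbra\tbbra\gbra}-\ud{\tbhat\fhat\tbhat\ghat}|\prec\Xi_2$ via the same Lemma~\ref{trace_rank} argument and noting that $\IE$ of a quantity that is $\prec\Xi_2$ is itself $\prec\Xi_2$; no Gaussian concentration step is required. You could repair your version by Taylor-expanding the rational denominators around $\E_{\gi}$ of the quadratic forms and controlling the tail, but that is extra work the paper's route avoids.
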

Combining with Lemma \ref{previous_bound}, (\ref{trivial_1}), (\ref{trivial_2}), (\ref{trivial_4}) and (\ref{trivial_bounds}), we obtain that
$$\E _{\gi} S^{[2]}_i=\E_{\gi}\Big[ \underline{\tbhat \fbra} \Big( S_i^{[2]} -b_i T_i^{[2]} \Big) +\underline{\tbhat \fbra \tbhat} \Big( (\fbra \tbbra \gbra)_{ii} +T_i^{[2]} \Big)$$
$$-\Big( \underline{\tbhat \fbra \tbbra \gbra} - \underline{\tbhat \fbra} \Big) (\tbbra \gbra)_{ii}  $$
$$+\Big(\underline{\tbhat \fbra \tbbra \gbra \tbhat}  -\underline{\tbhat \fbra \tbhat} \Big) (\gbra)_{ii}  \Big]+O_{\prec}(\Xi_1+\Xi_2).$$

Using the concentration results (\ref{concentrate_4}) and (\ref{concentrate_5}), together with approximation results (\ref{removei}), (\ref{removei_1}), (\ref{finite_rank_1}) and (\ref{finite_rank_3}), we have
\begin{align}\label{use_later_S}
\E _{\gi} S^{[2]}_i =&\underline{\tb F}  \E_{\gi} \Big( S_i^{[2]} -b_i T_i^{[2]} \Big) +\underline{\tb F \tb}  \E_{\gi} \Big( (F \tb G)_{ii} +T_i^{[2]}\Big)\nonumber\\
&+(\underline{\tb F \tb G \tb}  -\underline{\tb F \tb}) \E_{\gi} G_{ii}  - (\underline{\tb F \tb G}- \underline{\tb F}) \E_{\gi} (\tb G)_{ii}+O_{\prec}(\Xi_1+\Xi_2).
\end{align}
We use integration by parts on $T_i^{[2]}$ and obtain similarly that
\begin{align}\label{use_later_T}
\E _{\gi} T^{[2]}_i =&\underline{ F} \E_{\gi} \Big( S_i^{[2]} -b_i T_i^{[2]} \Big) +\underline{ F \tb} \E_{\gi} \Big( (F \tb G)_{ii} +T_i^{[2]}\Big) \nonumber\\
&+(\underline{ F \tb G \tb}  -\underline{F \tb} ) \E_{\gi} G_{ii} - (\underline{ F \tb G}- \underline{ F}) \E_{\gi} (\tb G)_{ii}+O_{\prec}(\Xi_1+\Xi_2).
\end{align}

Combining them together, we have
\begin{align}
 \underline{F} \E_{\gi} S^{[2]}_i =&  -\underline{\tb F} \E_{\gi}(F \tb G)_{ii} +\mathcal{Y} \E_{\gi}\Big( (F \tb G)_{ii}+T_i^{[2]}\Big)\nonumber\\
&+\Big( \underline{ F} (\underline{\tb F \tb G \tb}  -\underline{\tb F \tb} )- \underline{\tb F} (\underline{ F \tb G \tb}  -\underline{F \tb}) \Big) \E_{\gi} G_{ii}  \nonumber\\
& -\Big( \ud{F} (\underline{\tb F \tb G}- \underline{\tb F})  - \ud{\tb F}(\underline{ F \tb G}- \underline{ F})\Big)  \E_{\gi} (\tb G)_{ii}+O_{\prec}(\Xi_1+\Xi_2),\label{eq_FS}
\end{align}
where $\mathcal{Y}:=\ud{\tb F}-(\ud{\tb F})^2+\ud{\tb F \tb} \cdot \ud{F}$.
We recall the following result:
\begin{lemma}[Lemma 5.1 in \cite{eta2}]\label{mathcal_Y}
Under the assumptions in Proposition \ref{prop}, the estimate
$$\ud{\tb G(z)}-(\ud{\tb G(z)})^2+\ud{\tb G(z) \tb} =O_{\prec}\Big( \frac{1}{N \eta}\Big),$$
holds uniformly for $z=E+\ii \eta \in D_{bulk}$.
\end{lemma}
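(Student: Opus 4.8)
\textbf{Proof strategy for Lemma~\ref{mathcal_Y}.}

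The plan is to establish the identity $\ud{\tb G}-(\ud{\tb G})^2+\ud{\tb G \tb}=O_\prec((N\eta)^{-1})$ by deriving a self-consistent equation for the normalized trace $\ud{\tb G}$ using the partial randomness decomposition of the Haar measure, in the same spirit as the local law derivation of~\cite{eta1}. First I would recall from the local law in Theorem~\ref{local} that $\ud{\tb G(z)} = (z-\omega_B(z))m_{fc}(z)+O_\prec(\Psi^2)$ and $m_N(z)=\ud{G(z)}=m_{fc}(z)+O_\prec(\Psi^2)$, where $\Psi^2=(N\eta)^{-1}$. So it suffices to show that the deterministic quantity $(z-\omega_B)m_{fc}-((z-\omega_B)m_{fc})^2+\ud{\tb G\tb}_{\mathrm{det}}$ vanishes, where $\ud{\tb G\tb}_{\mathrm{det}}$ is the deterministic approximant of $\ud{\tb G\tb}$, and then control $\ud{\tb G\tb}-\ud{\tb G\tb}_{\mathrm{det}}$ by $O_\prec((N\eta)^{-1})$.

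The key steps, in order, are as follows. (1) Identify the deterministic approximant of $\ud{\tb G\tb}$. Using the resolvent identity $\tb G = I + (z-A)G$ applied on both sides, one has $\tb G\tb = \tb + (z-A)G\tb$ and hence, after using $\tb G = I+ zG - AG$ and cyclicity on $\ud{\,\cdot\,}$, one can express $\ud{\tb G \tb}$ in terms of $\ud{\tb G}$, $\ud{A\tb G}$ and lower-order traces; by the local laws and the structure of the subordination equations~\eqref{self_m_fc} and~\eqref{omega_finite}, the term $\ud{A\tb G}$ reduces to $\frac1N\sum_i \frac{a_i(z-\omega_B)}{a_i-\omega_B}+O_\prec(\Psi^2)$, which, using $\frac{a_i}{a_i-\omega_B}=1+\frac{\omega_B}{a_i-\omega_B}$ and $\ud{(A-\omega_B)^{-1}}=m_{fc}$, becomes an explicit function of $\omega_B$, $z$ and $m_{fc}$. (2) Plug these deterministic approximants into $\ud{\tb G}-(\ud{\tb G})^2+\ud{\tb G\tb}$ and verify by a direct algebraic computation, using only the defining relations $\omega_A+\omega_B-z = -1/m_{fc}$ and $m_{fc}=m_A(\omega_B)$, that the leading deterministic part is exactly zero. (3) Control the fluctuation: each replacement of a product of resolvents and $\tb$ by its deterministic approximant costs at most $O_\prec(\Psi^2)=O_\prec((N\eta)^{-1})$ by the averaged local laws~\eqref{average} in Theorem~\ref{local} (with a continuity/perturbation argument to handle the $z$-dependent weights, as already done around~\eqref{argument_continuity}), and there are only finitely many such replacements, so the total error is $O_\prec((N\eta)^{-1})$.

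The main obstacle I expect is step~(1)--(2): extracting the correct deterministic approximant of the two-resolvent-type quantity $\ud{\tb G\tb}$ at coinciding spectral arguments and then carrying out the algebraic cancellation cleanly. The subtlety is that $\ud{\tb G \tb}$ is genuinely a ``second-moment'' quantity in $\tb$ and naive use of the first-order local law for $(\tb G)_{ii}$ is not enough; one needs the identity $(\tb G\tb)_{ii} = b_i(\tb G)_{ii} + (\text{off-diagonal contributions})$ together with the averaged local law to see that $\ud{\tb G\tb}=\frac1N\sum_i b_i(\tb G)_{ii}+O_\prec(\Psi^2)$ and then $\frac1N\sum_i b_i \cdot \frac{z-\omega_B}{a_i-\omega_B}$ — but note $b_i$ here is not aligned with the $a_i$-eigenbasis of $\tb G$'s approximant, so one must instead use $\tb G\tb = \tb(I+zG-AG) = \tb + z\tb G - \tb AG$ and bound $\ud{\tb AG}$ directly, or equivalently work with $\ud{\tb G \tb}=\ud{\tb}+\ud{(z-A)\tb G}-\ud{(z-A)\tb G\,\tb A G}\cdots$; a cleaner route is $\tb G \tb = \tb + (z-A)G\tb = \tb + (z-A)(\tb G -I)$, giving $\ud{\tb G\tb} = \ud{\tb} + (z-A)$-weighted traces of $\tb G$ that are controlled by~\eqref{average}. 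Once the approximant is correctly pinned down, the cancellation in step~(2) is a short computation using~\eqref{self_m_fc}; alternatively, the identity can be verified by differentiating the relation $\ud{\tb G}=(z-\omega_B)m_{fc}$ and using~\eqref{differential_relation}, which sidesteps some of the bookkeeping. I would present whichever of these is shortest, deferring the routine local-law estimates of the individual error terms to the Appendix.
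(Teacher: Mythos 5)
First, note that the paper itself does not prove this lemma: it is imported verbatim from \cite{eta2}, and the accompanying remark only indicates the route taken there, namely that $\E[\mathcal{Y}]=0$ by a Ward identity (left-invariance of Haar measure) together with optimal concentration of $\mathcal{Y}$ at scale $(N\eta)^{-1}$. Your route is genuinely different: reduce every trace to a weighted average of diagonal resolvent entries via $\tb G=I+(zI-A)G$ and $\tb G\tb=\tb+(zI-A)+(zI-A)G(zI-A)$, apply the averaged local law \eqref{average}, and check that the deterministic parts cancel. Steps (1) and (3) of your plan do go through: $\ud{\tb G\tb}=z+\frac{1}{N}\sum_i(z-a_i)^2G_{ii}$, the weights $(z-a_i)^2$ are deterministic and uniformly bounded on $D_{bulk}$, so each replacement costs $O_{\prec}(\Psi^2)=O_{\prec}((N\eta)^{-1})$, and given Theorem~\ref{local} this is arguably a cleaner argument than the one in \cite{eta2}.

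The gap is in step (2) --- the one computation you defer and simply assert to work. Carrying it out: with $u:=z-\omega_B(z)$ and $m:=m_{fc}(z)$ one finds $\ud{\tb G}\approx um$ and $\ud{\tb G\tb}\approx u^2m-u$ (using $\ud{A}=\ud{B}=0$), so the deterministic part of the displayed expression is
$um-u^2m^2+u^2m-u=u(m-1)(1-um)$,
which is of order one in the bulk; it would vanish only if $(z-\omega_B(z))m_{fc}(z)\equiv 1$, which is false (for instance $\ud{\tb G}\to 0$ as $\Im z\to\infty$). The cancellation does occur for $\ud{\tb G}-(\ud{\tb G})^2+\ud{\tb G\tb}\cdot\ud{G}$, since $(u^2m-u)\cdot m=u^2m^2-um$; this is exactly the quantity $\mathcal{Y}$ defined in the text immediately above the lemma (with the factor $\ud{F}$), and it is also the combination that the Ward identity annihilates in expectation. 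In other words, the displayed statement is missing the factor $\ud{G(z)}$ on the last term. Your method proves the corrected statement, but as written your step (2) would fail; you must actually perform the algebra, which exposes the discrepancy, and then either prove the version with the factor $\ud{G}$ or flag the misprint.
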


\begin{remark} The smallness of $\mathcal{Y}$ in~Lemma~\ref{mathcal_Y} is a consequence of the optimal concentration estimates established in~\cite{eta2} and the fact that $\E[\mathcal{Y}]=0$. The latter can be seen from Ward identities similar to Subsection~\ref{subsection ward}.
\end{remark}

Summing over $i$, using the concentration results Lemma \ref{lemma_concentrate}, \ref{concentrate_2} and (\ref{approximate}), we have
$$\ud{F} \cdot \underline{\tb F \tb G} = \underline{\tb F} \cdot \underline{F \tb G}  -   \underline{ F} \cdot \Big(\underline{\tb F \tb G \tb}  \cdot \underline{ G} -\underline{\tb F \tb} \cdot \underline{ G}   -(\underline{\tb F \tb G}- \underline{\tb F}) \underline{\tb G} \Big) $$
$$+\underline{\tb F}  \Big(\underline{ F \tb G \tb} \cdot \underline{ G} -\underline{F \tb} \cdot \underline{ G} - (\underline{ F \tb G}- \underline{ F}) \underline{\tb G} \Big)+O_{\prec}(\Xi_1+\Xi_2) .$$
We can solve for $\underline{\tb F \tb G \tb}$ and 
$$\underline{\tb F \tb G \tb}=\Big( -\frac{1}{\g} +\frac{\underline{\tb G}}{\underline{G}} + \frac{\underline{\tb F}}{\underline{F}}\Big) \underline{\tb F \tb G} +\frac{\underline{\tb F}}{\underline{F}\cdot \underline{G}} \Big( 1-\underline{\tb G}\Big) \underline{F \tb G} +\underline{\tb F \tb}-\frac{(\underline{\tb F})^2}{\underline{F}}+O_{\prec}(\Xi_1+\Xi_2).$$
Returning to (\ref{eq_FS}), we obtain
$$\E_{\gi} S_i^{[{2}]}=-\frac{\underline{\tb F}}{\underline{F}}\E_{\gi} (F \tb G)_{ii}+\Big( -\frac{ \underline{\tb F \tb G}}{\g} +\frac{\underline{\tb F} \cdot \underline{F \tb G}}{ \underline{F}\cdot \g}\Big) \E_{\gi} G_{ii}+O_{\prec}(\Xi_1+\Xi_2).$$
The resolvent definition (\ref{tilde quantities}) and (\ref{approximate}) imply that
$$S_i^{[2]}=-(\tb F \tb G)_{ii}+O_{\prec}(\Xi_1)=(a_i-z_2) (F \tb G)_{ii} -(\tb G)_{ii} +O_{\prec}(\Xi_1).$$
Therefore, we obtain that
$$(a_i-z_2+\frac{\underline{\tb F}}{\underline{F}}) \E_{\gi}(F \tb G)_{ii} =\Big( -\frac{ \underline{\tb F \tb G}}{\g} +\frac{\underline{\tb F} \cdot \underline{F \tb G}}{ \underline{F}\cdot \g}\Big) \E_{\gi} G_{ii} +\E_{\gi}(\tb G)_{ii}+O_{\prec}(\Xi_1+\Xi_2).$$
Dividing $a_i-z_2+\frac{\underline{\tb F}}{\underline{F}} \approx a_i-\omega_B(z_2)$ which is away from zero, combining with the local law Theorem \ref{local}, we have
\begin{align}
 \E_{\gi} (F \tb G)_{ii} =&\Big( -\frac{ \underline{\tb F \tb G}}{\g} +\frac{\underline{\tb F} \cdot \underline{F \tb G}}{ \underline{F}\cdot \g}\Big) \frac{\E_{\gi}G_{ii}}{a_i-\omega_B(z_2)} +\frac{\E_{\gi}(\tb G)_{ii}}{a_i-\omega_B(z_2)}+O_{\prec}(\Xi_1+\Xi_2)\nonumber\\
=&\frac{z_1-\omega_B(z_1)}{(a_j-\omega_B(z_1))(a_j-\omega_B(z_2))}-\frac{1}{m_{fc}(z_1)(a_j-\omega_B(z_1)) (a_j-\omega_B(z_2))} \Big( \ud{\tb F \tb G} -(z_2-\omega_B(z_2)) \ud{F \tb G} \Big)\label{partial_exp_fbg}\\
&+O_{\prec}(\Xi_1+\Xi_2).\nonumber
\end{align}
Armed with Lemma \ref{concentrate_2}, it suffices to estimate $\ud{\tb F \tb G}$ and $\ud{F \tb G}$. The second one is easy to find. It follows from the resolvent identity (\ref{le rock 4}),  the local law Theorem \ref{local} and the arguments in proving (\ref{argument_1}), i.e.,
\begin{equation}\label{fbg}
\underline{F \tb G}= \frac{1}{z_1-z_2} \Big( \underline{\tb G} -\underline{\tb F} \Big) = \frac{T_B}{z_1-z_2}+O_{\prec}(\Xi_2),
\end{equation}
with $T_B \equiv T_B(z_1,z_2)$ given in (\ref{T_B}).

Next, we will use (\ref{partial_exp_fbg}) to solve for $\ud{\tb F \tb G}$. Averaging over $i$ and using the concentration results Lemma \ref{concentrate_2}, we obtain that
\begin{equation}\label{ieq}
L_B \underline{\tb F \tb G}  = (z_1-\omega_B(z_1))m_{fc}(z_1)L_B +(z_2-\omega_B(z_2)) L_B \ud{F \tb G}-m_{fc}(z_1) \ud{F \tb G}+O_{\prec}(\Xi_1+\Xi_2),
\end{equation}
where $L_B \equiv L_B(z_1,z_2)$ is given in (\ref{LM_def}). Next, we show that $L_B$ is non-vanishing. It is straightforward to check from (\ref{LM}) when $z_1$ and $z_2$ are in different half planes. If $z_1$ and $z_2$ are in the same half plane, without loss of generality, we assume $z_1,z_2 \in \C^+$. Since $|\widetilde m'_{fc}(E_0+ \ii 0)| \geq c>0$, due to Lemma \ref{difference}, there exists a neighborhood of $E_0$, denoted by $D_0$, such that for all $z \in D_0 \cap \C^+$, $|m'_{fc}(z)| \geq c>0$ for large $N$. Hence $m_{fc}(z)$ is locally injective in such neighborhood, 
so are $\omega_A(z)$ and $\omega_B(z)$. Thus for all $z_1,z_2 \in D_0 \cap \C^+$, $L_B(z_1,z_2) \neq 0$, and (\ref{ieq}) implies that
\begin{equation}\label{bfbg}
\underline{\tb F \tb G} = (z_1-\omega_B(z_1))m_{fc}(z_1) +(z_2-\omega_B(z_2)) \frac{T_B}{z_1-z_2}-\frac{m_{fc}(z_1)}{z_1-z_2} \frac{T_B}{L_B}+\frac{1}{|L_B|}O_{\prec}(\Xi_1+\Xi_2).
\end{equation}
Note that if $z_1$ and $z_2$ are in different half planes, $\frac{1}{L_B(z_1,z_2)} =\frac{\omega_B(z_1)-\omega_B(z_2)}{m_{fc}(z_1)-m_{fc}(z_2)} \sim 1$ because of (\ref{imaginary_bound}) and (\ref{m_bound}). If $z_1$ and $z_2$ are in the same half plane, then $\omega_B(z)$ is analytic in a neighborhood encircling $z_1,z_2$. Due to (\ref{imaginary_bound}) and (\ref{m_bound}), we have
$$|L_B(z_1,z_2)-L_B(z_1,z_1)|=\Big|\sum_{j=1}^N \frac{1}{a_j-\omega_B(z_1)}\Big( \frac{1}{a_j-\omega_B(z_1)}-\frac{1}{a_j-\omega_B(z_2)} \Big) \Big|=O_{\prec}(|z_1-z_2|).$$
In addition, by (\ref{LMprime}), (\ref{omega_prime_bound}) and the fact that $|\wt m'_{fc}(E_0+\ii 0)| > c>0$, for all $z_1 \in D_0 \cap \C^+$ we have $|L_B(z_1,z_1)|=\Big| \frac{m'_{fc}(z_1)}{\omega'_B(z_1)} \Big| \geq c'>0$. Hence $|L_B(z_1,z_2)| \sim 1$ for sufficiently large $N$. Plugging (\ref{bfbg}) and (\ref{fbg}) into (\ref{partial_exp_fbg}), we obtain that 
$$ (F \tb G)_{jj} = \frac{1}{(z_1-z_2)  (a_j-\omega_B(z_1))(a_j-\omega_B(z_2))} \frac{T_B}{L_B}+O_{\prec}(\Xi_1+\Xi_2).$$  Hence we complete the proof of Proposition \ref{variance_bfbg}.

\section{Expectation of the linear statistics}\label{sec_expectation}
In this section, we compute the expectation of the linear statistics $\Tr f(H_N)$ and prove that the bias for mesoscopic linear statistics in the regular bulk vanishes. Via the Helffer-Sj\"ostrand Calculus, we have
\begin{equation}\label{expectation_fw2}
\E \Tr f(H_N) -N \int_{\R} f(x) \dd \mu_{fc}(x)=\frac{1}{ \pi}  \int_{\Omega_1}  \pzz \tf(z) \Big( \E \Tr G(z) -N m_{fc}(z) \Big) \dd^2z+O_{\prec}(N^{-\tau}).
\end{equation}
We then reduce the bias on the left side to $\E \Tr G(z)-N m_{fc}(z)$. The translation-invariance of Haar measure yields 
\begin{equation}\label{expectation_invariant}
\E [\underline{\tb G} G_{ii}]=\E [\underline{G} (\tb G)_{ii}].
\end{equation}
Then we have
$$\E [\underline{\tb G}] \E[ G_{ii}] +\E \<\underline{\tb G} \> \<G_{ii}\>=\E [\underline{G}] \E[ (\tb G)_{ii}]+\E \< \underline{G}\> \< (\tb G)_{ii}\>,$$
where the local law Theorem \ref{local} implies that 
$$\E [(\tb G)_{ii}] =\frac{\E [\underline{\tb G}]}{\E [\underline{G}]} \E[ G_{ii}] +O_{\prec}(\Psi^3).$$
Combining with the definition of the resolvent (\ref{tilde quantities}), we have
$$\Big(z-a_j -\frac{\E [\underline{\tb G}]}{\E [\underline{G}]}\Big) \E G_{ii}=-1+O_{\prec}(\Psi^3).$$
Since $z-a_j -\frac{\E [\underline{\tb G}]}{\E [\underline{G}]} \approx \omega_B(z)-a_j$ is alway from zero by (\ref{imaginary_bound}), we write
$$\E G_{ii}=\frac{1}{a_i-z +\frac{\E [\underline{\tb G}]}{\E [\underline{G}]}}+O_{\prec}(\Psi^3).$$
Note that Theorem \ref{local} yields
\begin{equation}\label{expansion_error}
\frac{\E [\underline{\tb G}]}{\E [\underline{G}]}=(z-\omega_B(z))+O_{\prec}(\Psi^2).
\end{equation}
Hence, we obtain the following expansion:
\begin{equation}\label{expansion_term}
\E G_{ii}=\frac{1}{a_i-\omega_B(z)} -\frac{1}{(a_i-\omega_B(z))^2} \Big( \frac{\E [\underline{\tb G}]}{\E [\underline{G}]}- z+\omega_B(z)\Big)+O_{\prec}(\Psi^3).
\end{equation}
Summing over the index $i$, we have
$$\E \Tr G(z)=N m_{fc}(z)-N m'_A(\omega_B(z))  \Big( \frac{\E [\underline{\tb G}]}{\E [\underline{G}]}- z+\omega_B(z)\Big)+O_{\prec}\Big(\frac{1}{\sqrt{ N\eta^3}}\Big).$$
Multiplying $\E[ \underline{G}]$ on both sides,
by direct computation, we have
\begin{align*}
\Big( (z-\omega_B(z))m'_A(\omega_B(z)) -m_{fc}(z)\Big) \Big(\E \Tr G-&N m_{fc}(z) \Big)=m'_A(\omega_B(z)) \E [\Tr \tb G]\\
&-m'_A(\omega_B(z)) (z-\omega_B(z)) N m_{fc}(z)+O_{\prec}\Big(\frac{1}{\sqrt{ N\eta^3}}\Big).
\end{align*}
We treat $\mathcal{G}$ in (\ref{tilde quantities}) similarly and obtain corresponding relation by interchanging $A$ with $B$. Combining them together, using the resolvent definition (\ref{tilde quantities}) and subordination equations (\ref{self_m_fc}), we obtain that
$$m^3_{fc}(z) \Delta(z) \Big(\E \Tr G-N m_{fc}(z) \Big) =O_{\prec}\Big(\frac{1}{\sqrt{ N\eta^3}}\Big),$$
with $\Delta(z)$ in (\ref{delta}). Therefore, by (\ref{delta_bound}) and (\ref{imaginary_bound}), we have
$$\E \Tr G(z) -N m_{fc}(z)=O_{\prec}\Big(\frac{1}{\sqrt{ N\eta^3}}\Big).$$
Plugging this into (\ref{expectation_fw2}), using Stokes' formula and (\ref{assumpf}), we obtain
\begin{align*}\E \Tr f(H_N) -N \int_{\R} f(x) \dd \mu_{fc}(x)&=\frac{1}{2 \pi \i}  \int_{\partial \Omega_1} \tf(z) \Big( \E \Tr G(z) -N m_{fc}(z) \Big) \dd z+O_{\prec}(N^{-\tau})\\ &=O_{\prec}\Big( \frac{N^{2 \tau}}{\sqrt{N \eta_0} }\Big)+O_{\prec}(N^{-\tau})\,.
\end{align*}
Thus, in the bulk, the bias vanishes on mesoscopic scales. This concludes the proof of Proposition~\ref{prop2}.

\section{Orthogonal case}\label{sec_orthogonal}
In this section, we prove that Theorem \ref{meso} holds true for the orthogonal conjugation $H_N=A+O B O^{T}$, where $O \in O(N)$ with Haar measure ($\bm\beta=1$). 
\subsection{Proof of Proposition \ref{prop}}
Let $X^T=-X \in \R^{N \times N}$ be deterministic and define $O_t:=e^{t X} O$, $H_{t}:=A+O_t^T B O_{t}$, and $G_t=(H_t-zI)^{-1}$ for $t \in \R$. The function in (\ref{function_invariant}) has constant expectation in $t$ by the translation-invariance of the Haar orthogonal measure. 

Choose $X=\ei \ej^*-\ej \ei^*$ and average over $i$, since $G$ is symmetric, we then obtain an analogue of (\ref{equation_2}), i.e.,
\begin{align*}
\E \Big[  \ea \Big(\g (\tb G)_{jj}-\frac{1}{N} (G \tb G)_{jj}-\underline{\tb G} G_{jj} + \frac{1}{N} (G^2 \tb)_{jj}\Big) \Big] +2 I_j(z)=0,
\end{align*}
with $I_j$ given in (\ref{le Ij}). Proceeding to the arguments in Section \ref{sec_proof_main_theorem} in combination with the local law Theorem \ref{local}, as an analogue of (\ref{sum11}), we obtain
\begin{align}
 & \Big( (z_1-\omega_B(z_1))m'_A(\omega_B(z_1)) -m_{fc}(z_1)\Big) \E [\ea \<\Tr G\>]=m'_A(\omega_B(z_1)) \E [\ea \<\Tr{\tb G}\>]
 -2 \sum_{j=1}^N\frac{I_j(z_1)}{a_j-\omega_B(z_1)}  \nonumber\\
& \qquad + \E \Big[  \ea \Big\< \frac{1}{N} \sum_{j=1}^N \frac{1}{a_j -\omega_B(z_1)} (G \tb G)_{jj}\Big\> \Big]-\E \Big[  \ea \Big\< \frac{1}{N} \sum_{j=1}^N \frac{1}{a_j -\omega_B(z_1)} (G^2 \tb)_{jj}\Big\> \Big] +O_{\prec}\Big(\frac{1}{N \eta_1^2} \Big).\label{sum_orth_1}
\end{align}

Next, we will show that the last line of (\ref{sum_orth_1}) is negligible. For the second term on the last line, using the local law~(\ref{average}) and Cauchy integral formula, we have
$$\E \Big[  \ea \Big\< \frac{1}{N} \sum_{j=1}^N \frac{1}{a_j -\omega_B(z_1)} (G^2 \tb)_{jj}\Big\> \Big] =\E \Big[  \ea \Big\< \frac{1}{N} \sum_{j=1}^N \frac{1}{a_j -\omega_B(z_1)} \frac{\dd}{\dd z_1} (G \tb)_{jj}\Big\> \Big]  =O_\prec \Big(\frac{1}{N \eta_1^2}\Big). $$
For the first term in the last line of (\ref{sum_orth_1}), we need the following  analogue of Proposition \ref{variance_bfbg} for the orthogonal case
\begin{proposition}\label{gbg}
Proposition \ref{variance_bfbg} holds true for $\bm\beta=1$.  In particular, there exists a small neighborhood of $E_0$, denoted by $D_0$, such that for all $z=E+\ii \eta \in D_0 \cap D_{bulk}$ and $1 \leq j \leq N$, we have
\begin{equation}\label{gbg_eq}
 (G(z) \tb G(z))_{jj} = \frac{ [(z-\omega_B(z))m_{fc}(z)]' }{(a_j-\omega_B(z))^2} \frac{1}{L_B(z)}+O_{\prec}\Big(\frac{1}{\sqrt{N \eta} \eta}+\frac{1}{N \eta^2}\Big),
\end{equation}
for sufficiently large $N$, with $L_B(z)$ given in (\ref{LM_def_zz}). 
\end{proposition}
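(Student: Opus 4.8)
The plan is to follow the same partial-randomness strategy used to prove Proposition~\ref{variance_bfbg}, now specialized to the orthogonal setting and to the coincident-point case $z_1=z_2=z$. First I would record the orthogonal analogue of the Householder decomposition: for Haar $O\in O(N)$ and fixed index $i$, write $O=-R_i O^{\langle i\rangle}$ with $R_i=I-\bm r_i\bm r_i^*$ a real Householder reflection sending $\bm e_i$ to a uniformly distributed real unit vector, and $\widetilde B^{\langle i\rangle}:=O^{\langle i\rangle}B(O^{\langle i\rangle})^T$ independent of $\bm r_i$. Approximating $\bm r_i$ by a real Gaussian vector $\bm g_i\sim N_\R(0,N^{-1}I_N)$ (note the real, rather than complex, normalization — this is the source of the $\bm\beta=1$ versus $\bm\beta=2$ discrepancy), one introduces $G^{(i)}$, $\widetilde B^{(i)}$ and the one- and two-point functions $S_i^{[1]},T_i^{[1]},S_i^{[2]},T_i^{[2]}$ exactly as in Section~\ref{sec_proof_main_lemma}, together with the removal estimates (the orthogonal counterparts of Lemmas~\ref{remove} and~\ref{concentrate_2}) showing $(G\widetilde BG)_{jj}=(\widehat G^{(i)}\text{-version})+O_\prec(\Psi^2)$ and that the partial-expectation fluctuations are controlled by $\Xi_1,\Xi_2$ with $z_1=z_2=z$, i.e. by $\Psi^2(z)$ up to the $\frac{1}{\sqrt{N\eta}\,\eta}$ term in~\eqref{gbg_eq}.

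Next I would run Gaussian integration by parts on $\E_{\bm g_i}S_i^{[2]}$ and $\E_{\bm g_i}T_i^{[2]}$. The key point is that for a real Gaussian the Wick/IBP formula differs from the complex one in~\eqref{integration_by_parts_complex}: differentiating $W_i=I-\bm w_i\bm w_i^*$ now produces an extra symmetric contribution, which changes the combinatorial prefactors and ultimately accounts for the factor $\frac{2}{\bm\beta}$ appearing in the bias $b(z)$ of~\eqref{bz}. Carrying this out produces a linear system in $\E_{\bm g_i}S_i^{[2]}$, $\E_{\bm g_i}T_i^{[2]}$ whose coefficients are the traces $\ud{\widetilde B F}$, $\ud{\widetilde B F\widetilde B}$, $\ud F$ etc.\ evaluated at $z_1=z_2=z$. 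Using the $z_1=z_2$ collapse together with the resolvent identity in the degenerate form $G(z)\widetilde BG(z)=\tfrac{\dd}{\dd z}\big(\text{something}\big)$ — concretely $\ud{F\widetilde BG}\to\tfrac{\dd}{\dd z}\ud{\widetilde BG}$, whose local law limit is $[(z-\omega_B)m_{fc}]'$ by differentiating Theorem~\ref{local} and a Cauchy-integral argument — I would solve the system for $\E_{\bm g_i}(G\widetilde BG)_{ii}$, invoking Lemma~\ref{mathcal_Y} to kill the $\mathcal Y$-type term and the non-vanishing of $L_B(z)=m'_A(\omega_B(z))=m'_{fc}(z)/\omega'_B(z)$ (which holds in the neighborhood $D_0$ of $E_0$ precisely because $|m'_{\mu_\alpha\boxplus\mu_\beta}(E_0+\ii0)|>c$, via Lemma~\ref{difference} and local injectivity) to divide through. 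This yields~\eqref{gbg_eq} after reinstating the $a_i-\omega_B(z)$ denominators, exactly as in the unitary derivation of~\eqref{partial_exp_fbg}–\eqref{bfbg} but with every $z_2$ set equal to $z_1$.

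Finally, I would note that the general (non-coincident) statement of Proposition~\ref{variance_bfbg} for $\bm\beta=1$ follows by the identical argument without setting $z_1=z_2$, so~\eqref{fbg_ii} holds verbatim in the orthogonal case; the displayed special case~\eqref{gbg_eq} is obtained either by this direct coincident-point computation or by taking the limit $z_2\to z_1$ in~\eqref{fbg_ii} using analyticity of the error functions $E_{B,j}$ off the real axis, observing that $\frac{1}{z_1-z_2}\frac{T_B(z_1,z_2)}{L_B(z_1,z_2)}\to \frac{[(z-\omega_B(z))m_{fc}(z)]'}{L_B(z)}$ as $z_2\to z_1$ since $T_B(z_1,z_2)=(z_1-z_2)\frac{\dd}{\dd z}[(z-\omega_B(z))m_{fc}(z)]\big|_{z=z_1}+O((z_1-z_2)^2)$ and $L_B(z_1,z_2)\to L_B(z_1)\ne0$. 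The main obstacle I anticipate is bookkeeping the real-Gaussian integration-by-parts terms correctly: one must track the extra symmetric pieces through the whole linear system and verify they assemble into the stated coefficient structure without spurious lower-order terms spoiling the $\frac{1}{\sqrt{N\eta}\,\eta}+\frac{1}{N\eta^2}$ error bound — the rest is a routine transcription of Section~\ref{sec_proof_main_lemma} with $\C$ replaced by $\R$ and $z_2$ replaced by $z_1$.
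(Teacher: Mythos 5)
Your proposal follows essentially the same route as the paper: orthogonal Householder decomposition, real-Gaussian approximation, real Gaussian integration by parts, the same $(S_i^{[2]},T_i^{[2]})$ linear system, Lemma~\ref{mathcal_Y}, non-vanishing of $L_B$ near $E_0$, and the $z_1=z_2$ collapse via $\ud{G\tb G}=\frac{\dd}{\dd z}\ud{\tb G}$. The structure is right, and both your direct coincident-point computation and the alternative of sending $z_2\to z_1$ in the general formula are viable (the paper effectively does the former, substituting $z_1=z_2=z$ one step before the end).

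There is, however, one misconception worth correcting because it could derail the bookkeeping you flagged as the main obstacle. You write that the extra symmetric piece from differentiating $W_i$ under real Gaussian IBP ``changes the combinatorial prefactors'' of the linear system and ``ultimately accounts for the factor $\frac{2}{\bm\beta}$'' in the bias $b(z)$. That is not what happens. In the paper's computation the extra terms produced by $\frac{\partial W_i}{\partial g_{ik}}=-\ek(\ei+\gi)^*-(\ei+\gi)\ek^*$ all carry an explicit $\frac{1}{N}$ prefactor and are bounded by $O_\prec(\Xi_2)$, so they are absorbed into the error and the coefficients of the linear system for $\E_{\gi}S_i^{[2]}$, $\E_{\gi}T_i^{[2]}$ are \emph{unchanged} to leading order relative to the unitary case; the leading formula \eqref{gbg_eq} has no $\bm\beta$-dependence at all. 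The $\frac{2}{\bm\beta}$ factor in $b(z)$ comes from a different place: the orthogonal Ward identity \eqref{ortho} contains additional $\frac{1}{N}(G\tb G)_{jj}$ and $\frac{1}{N}(G^2\tb)_{jj}$ terms that are absent when $\bm\beta=2$, and it is \emph{those} terms (evaluated via Proposition~\ref{gbg}) that produce $S_B(z)$, $S_A(z)$ and hence the $\bm\beta$-dependent bias in Section~\ref{sec_orthogonal}. If you expect the real IBP to modify the system's coefficients you will look for a nonexistent leading-order contribution and not see how the argument closes; once you treat those extra pieces as $O_\prec(\Xi_2)$ errors, the rest is indeed a direct transcription of Section~\ref{sec_proof_main_lemma}.
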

The proof of Proposition \ref{gbg} is provided in the next subsection. Therefore,
$$\E \Big[  \ea \Big\< \frac{1}{N} \sum_{j=1}^N \frac{1}{a_j -\omega_B} (G \tb G)_{jj}\Big\> \Big] =O_{\prec}\Big(\frac{1}{\sqrt{N \eta} \eta}+\frac{1}{N \eta^2}\Big).$$
Returning to (\ref{sum_orth_1}), we have
\begin{align}\label{Io}
&\Big( (z_1-\omega_B(z_1))m'_A(\omega_B(z_1)) -m_{fc}(z_1)\Big) \E [\ea \<\Tr G\>]\nonumber\\
&=m'_A(\omega_B(z_1)) \E [\ea \<\Tr{\tb G}\>]-2 \sum_{j=1}^N\frac{I_j(z_1)}{a_j-\omega_B(z_1)} +O_{\prec}\Big(\frac{1}{\sqrt{N \eta_1} \eta_1}+\frac{1}{N \eta_1^2}\Big).
\end{align}
 We treat similarly for $\tg$ in (\ref{tilde quantities}) and obtain corresponding relation by interchanging $A$ with $B$. 

 Therefore, as an analogue of (\ref{sum1}), we obtain that
$$\E [\ea \<\Tr G\>]=\frac{ \ii 2 \lambda}{\pi} \E \bigg[ \ea  \int_{\Omega_2}  \frac{\partial }{\partial \overline{z_2}} \tf(z_2) \frac{\partial }{\partial {z_2}}K(z_1,z_2) \dd^2 z_2 \bigg]+O_{\prec}\Big(\frac{1}{\sqrt{N \eta_1} \eta_1}+\frac{1}{N \eta_1^2}\Big),$$
with $K(z_1,z_2)$ given in (\ref{kernel_K_1}). Following the arguments in Section \ref{sec_proof_main_theorem}, we complete the proof of Proposition~\ref{prop} for $\bm\beta=1$.

\subsection{Proof of Proposition \ref{gbg}}
In this subsection, we will use partial randomness decomposition to prove Proposition \ref{gbg}. For any Haar orthogonal matrix $O \equiv O_N \in O(N)$,  there exists a random vector $\vi$ uniformly distributed on the unit sphere in $\R^{N}$ and  an independent Haar orthogonal matrix $O^{i}\in O(N-1)$, such that
$$O=-\mbox{sgn}(v_{ii}) R_i O^{\<i\>};\qquad R_i:=I-\ri \ri^*;\qquad \ri:=\sqrt{2} \frac{\ei+\mbox{sgn}(v_{ii}) \vi}{\|\ei+\mbox{sgn}(v_{ii}) \vi\|_2},$$
where $R_i$ is the Householder transform sending $\ei$ to $-\mbox{sgn}(v_{ii}) {\bm v_i}$, and $O^{\<i\>}$ is an orthogonal matrix with $\ei$ as its $i$-th column and $O^{i}$ as its $(i,i)$-minor. Thus we can write
$$\tb=O B O^* =R_i \tb^{\<i\>} R_i, \quad \mbox{with} \qquad \tb^{\<i\>}:=O^{\<i\>} B (O^{\<i\>})^*.$$
Note that $\tbhat$ is independent of $\vi$, and define
$$H^{\<i\>}:=A+\tbhat; \qquad \ghat:=(A+\tbhat-z)^{-1}.$$
Similarly as (\ref{uniform_vector}), there exists $\widetilde \gi \sim {N}_{\R}(0,N^{-1}I_N)$, so that
$$\hi:=\mbox{sgn}(v_{ii})\vi=\mbox{sgn}(v_{ii}) \frac{\widetilde{\gi}}{\|\widetilde \gi\|_2}; \qquad l_i :=\frac{\sqrt{2}}{\|\ei+{\bm h_i}\|_2}=1+O_{\prec}(\frac{1}{\sqrt{N}}); \qquad \ri:=l_i (\ei+{\bm h_i}),$$
We define $g_{ik}:=\mbox{sgn}(v_{ii}) g_{ik} \sim {N}_{\R}(0,N^{-1})$ for $k \neq i$ that are independent and further introduce an independent random variable $g_{ii} \sim {N}_{\R}(0,N^{-1})$. To simplify the proof, we use the Gaussian vector $\gi \sim {N}_{\R}(0, N^{-1}I)$ to approximate $\hi$, and define
$$ \tb^{(i)}=W_i \tb^{\<i\>} W_i; \qquad W_i:=I-{\bm w_i} {\bm w_i}^*; \qquad {\bm w_i}:=\ei+\gi; \qquad G^{(i)}:=(A+B^{(i)}-z)^{-1}.$$
Most results in the unitary case still hold true for orthogonal setup, like Lemma \ref{remove}-\ref{finite_rank_2}. As the analogue of (\ref{integration_by_parts_complex}) and (\ref{derivative_R_complex}), we have
\begin{equation}\label{integration_by_parts_real}
\int_{\R} g f(g) e^{-\frac{g^2}{\sigma^2}} \dd g =\sigma^2 \int_{\R} \partial_{g} f'(g) e^{-\frac{g^2}{\sigma^2}} \dd g;
\end{equation}
\begin{equation}\label{derivative_R_complex}
\frac{\partial W_i}{\partial g_{ik} }= -\ek (\ei+\gi)^*-(\ei+\gi) \ek^*.
\end{equation}
Recall the definitions of $S_i^{[2]}, T_i^{[2]}, S_i^{[1]}, T_i^{[1]}$ in (\ref{define_S_two}) and (\ref{define_S_one}), as well as the control parameters $\Xi_1$ and $\Xi_2$ in (\ref{control_para}). By direct computation, as the counterpart (\ref{counterpart}) in unitary case, we obtain that
\begin{align*}
\E _{\gi} S^{[2]}_i=&\E_{\gi}\Big[ \underline{\tbhat \fbra} \Big( S_i^{[2]} -b_i T_i^{[2]} \Big) +\underline{\tbhat \fbra W_i \tbhat } \Big( (\fbra \tbbra \gbra)_{ii} +T_i^{[2]} \Big)\\
&+(\underline{\tbhat \fbra \tbbra \gbra}- \underline{\tbhat \fbra}) \Big( S_i^{[1]} -b_i T_i^{[1]} \Big) +(\underline{\tbhat \fbra \tbbra \gbra \tbhat}-\underline{\tbhat \fbra \tbhat}) \Big( G^{(i)}_{ii} +T_i^{[1]}  \Big) \Big]\\
&+\frac{1}{N} \E_{\gi} \Big[   (\ei+\gi)^*  \fbra (\tbhat)^2 W_i \fbra  \tbbra \gbra \ei  + (\ei+\gi)^*  \tbhat W_i \fbra \tb^{\<i\>} \fbra \tbbra \gbra \ei\Big]\\
&+\frac{1}{N} \E_{\gi} \Big[   (\ei+\gi)^*  \gbra\tbbra \fbra (\tbhat)^2 W_i \gbra  \ei+ (\ei+\gi)^* \tbhat W_i \gbra  \tbbra \fbra \tbhat \gbra \ei  \Big] \\
&-\frac{1}{N} \E_{\gi} \Big[  (\ei+\gi)^*  \fbra (\tbhat)^2 W_i \gbra \ei+ (\ei+\gi)^* \tbhat W_i \fbra \tb^{\<i\>} \gbra \ei  \Big]+O_{\prec}(\Xi_1+\Xi_2).
\end{align*}

Recalling Lemma A.1 in \cite{eta1}, we have the following rough estimates:
$$\xii^* \fbra (\tbhat)^2 \fbra  \tbbra \gbra \ei,\quad \xii^* \tbhat \fbra \tb^{\<i\>} \fbra \tbbra \gbra \ei,\quad \xii^* \fbra (\tbhat)^2  \gbra \ei  =O_\prec( \Xi_2);$$
$$\xii^* \tbhat \fbra \tb^{\<i\>} \gbra \ei,\quad \xii^* \gbra\tbbra \fbra (\tbhat)^2 \gbra  \ei,\quad \xii^* \tbhat \gbra  \tbbra \fbra \tbhat \gbra \ei  =O_{\prec}( \Xi_2),$$
where $\xii$ stands for either $\ei$ or $\gi$. Therefore, we obtain the same results, e.g., (\ref{use_later_S}) and (\ref{use_later_T}), as in the unitary case with additional affordable error $\Xi_2$. Using the arguments in the proof of Proposition~\ref{variance_bfbg}, one extends Proposition~\ref{variance_bfbg} for $\bm\beta=1$.  In particular, if $z_1=z_2=z \in D_{bulk}$, we have
\begin{align}
&  (G \tb G)_{ii} =\frac{z-\omega_B(z)}{(a_j-\omega_B(z))^2}-\frac{1}{m_{fc}(z)(a_j-\omega_B(z))^2} \Big( \ud{\tb G \tb G} -(z-\omega_B(z)) \ud{G \tb G} \Big)+O_{\prec}(\Xi_1+\Xi_2)\label{partial_exp_fbg_real}.
\end{align}
Averaging over $i$ and by (\ref{LM_def_zz}), we obtain that
$$L_B(z) \underline{\tb G \tb G}  = (z-\omega_B(z))m_{fc}(z) L_B(z) +(z-\omega_B(z)) L_B(z) \ud{G \tb G}-m_{fc}(z) \ud{G \tb G}+O_{\prec}(\Xi_1+\Xi_2).$$
By (\ref{LMprime}), (\ref{omega_prime_bound}) and $\widetilde m'_{fc}(E_0+\ii 0) \neq 0$, there exist some $c>0$ and a neighborhood of $E_0$, denoted by $D_0$, such that for all $z \in D_0 \cap \C^+$, $|L_B(z) |= \Big|\frac{m_{fc}'(z)}{\omega_B'(z)} \Big| >c$ for large $N$. Hence, dividing $L_B(z)$ on both sides, we have
$$ \underline{\tb G \tb G}  = (z-\omega_B)m_{fc}+(z-\omega_B)  \ud{G \tb G}-\frac{m_{fc}(z) }{L_B(z)}\ud{G \tb G} +O_{\prec}(\Xi_1+\Xi_2).$$
Note that the local law Theorem \ref{local} implies that
$$\ud{G \tb G}=\ud{\tb G^2}=\frac{\dd}{\dd z} \ud{\tb G}=[(z-\omega_B)m_{fc}(z)]'+O_{\prec}(\Xi_2).$$
Combining them together, we complete the proof of Proposition \ref{gbg}.

 \subsection{Expectation of the linear statistics and bias}
Theorem \ref{meso} follows directly from Proposition \ref{prop}. The last step is to show that the bias in the regular bulk vanishes. As the analogue of (\ref{expectation_invariant}), we have
\begin{equation}\label{ortho}
\E \underline{G} (\tb G)_{ii}-\frac{1}{N} \E (G \tb G)_{ii}= \E \underline{\tb G} G_{ii} -\frac{1}{N} \E (G^2 \tb)_{ii}.
\end{equation}
Combining with the local law Theorem \ref{local} and the definition of resolvent, we have
$$\Big(z-a_i -\frac{\E [\underline{\tb G}]}{\E [\underline{G}]}\Big) \E G_{ii}=-1-\frac{1}{N} \frac{1}{\E \underline{G}}  \Big( \E (G^2 \tb)_{ii}- \E (G \tb G)_{ii} \Big)+O_{\prec}(\Psi^3).$$
Dividing $z-a_i -\frac{\E [\underline{\tb G}]}{\E [\underline{G}]}$ on both sides, we obtain that
$$\E G_{ii}=\frac{1}{a_i-z +\frac{\E [\underline{\tb G}]}{\E [\underline{G}]}}+\frac{1}{N} \frac{1}{\E \underline{G}}  \Big(\E ( G^2 \tb)_{ii}- \E (G \tb G)_{ii} \Big)\frac{1}{a_j-z +\frac{\E [\underline{\tb G}]}{\E [\underline{G}]}}+O_{\prec}(\Psi^3).$$
Using (\ref{expansion_error}), we have the following expansion:
$$\E G_{ii}=\frac{1}{a_i-\omega_B(z)} -\frac{1}{(a_i-\omega_B(z))^2} \Big( \frac{\E [\underline{\tb G}]}{\E [\underline{G}]}- z+\omega_B(z)\Big)+\frac{1}{N} \frac{1}{\E \underline{G} (a_i-\omega_B(z))}  \Big( \E ( G^2 \tb)_{ii}-\E (G \tb G)_{ii} \Big)+O_{\prec}(\Psi^3).$$
Summing over the index $i$, by direct computation, we have
\begin{align}\label{eq1_ortho}
\Big( (z-\omega_B(z))m'_A(\omega_B(z)) -m_{fc}(z)\Big)& \Big(\E \Tr G-N m_{fc}(z) \Big)=m'_A(\omega_B(z)) \E [\Tr \tb G]\nonumber\\ &-m'_A(\omega_B(z)) (z-\omega_B(z)) N m_{fc}-S_B(z)+O_{\prec}(N \Psi^3)\,,
\end{align}
where
\begin{equation}\label{I_s}
S_B(z):=\frac{1}{N} \sum_{j=1}^N \frac{1}{(a_j-\omega_B)}  \Big(\E ( G^2 \tb)_{ii}- \E (G \tb G)_{ii} \Big)=:S_{B,1}+S_{B,2}.
\end{equation}
We treat $\mathcal{G}$ in (\ref{tilde quantities}) similarly and obtain that
\begin{align}\label{eq2_ortho}
&\Big( (z-\omega_A(z))m'_B(\omega_A(z)) -m_{fc}(z)\Big) \Big(\E \Tr \mathcal{G}-N m_{fc}(z) \Big) \nonumber\\
&\qquad=m'_B(\omega_A(z)) \E [\Tr \ta \mathcal{G}]-m'_B(\omega_A(z)) (z-\omega_A(z)) N m_{fc}(z)
-S_A(z)+O_{\prec}(N \Psi^3),
\end{align}
where
\begin{equation}\label{J_s}
S_A(z):=\frac{1}{N} \sum_{j=1}^N \frac{1}{(b_j-\omega_A)}  \Big(\E ( \mathcal G^2 \ta)_{ii}- \E (\mathcal G \ta \mathcal G)_{ii} \Big)=:S_{A,1}+S_{A,2}.
\end{equation}
Therefore, combining (\ref{eq1_ortho}) with (\ref{eq2_ortho}), we obtain that
$$m^3_{fc}(z)\Delta(z) \Big(\E \Tr G-N m_{fc}(z) \Big) =m'_B(\omega_A(z)) S_B(z)+m'_A(\omega_B(z)) S_A(z)+O_{\prec}(N \Psi^3),$$
with $\Delta(z)$ in (\ref{delta}). Dividing $m^3_{fc}(z) \Delta(z)$ on both sides and by (\ref{delta_bound}) (\ref{imaginary_bound}) and (\ref{self_m_fc}), we obtain
\begin{align}\label{linear_statistics_formula}
\E \Tr G(z) -N m_{fc}(z)=\frac{\omega_B'(z)}{m_{fc}(z)}S_B(z)+\frac{\omega_A'(z)}{m_{fc}(z)}S_A(z)+O_{\prec}(N \Psi^3).
\end{align}
The first term of $S_B$ is easy to estimate using the local law and Cauchy integral formula, i.e.,
\begin{align}
S_{B,1}=\frac{1}{N}\sum_{j=1}^N \frac{1}{a_j-\omega_B(z)} \frac{\dd}{\dd z} (G \tb)_{jj}
 =(1-\omega'_B(z) ) L_B(z)+ \frac{1}{N}\sum_{j=1}^N \frac{(z-\omega_B(z))\omega_B'(z)}{(a_j -\omega_B(z))^3} +O_{\prec}\Big(\frac{1}{N \eta^2}\Big),\label{ggb_average}
 \end{align}
 with $L_B(z)$ given in (\ref{LM_def_zz}).
By Proposition \ref{gbg}, we have
$$S_{B,2}= \frac{ [(z-\omega_B)m_{fc}(z)]' }{L_B(z)} \frac{1}{N} \sum_{j=1}^N \frac{1}{(a_j-\omega_B(z))^3} .$$
Note that by differentiating (\ref{LM_def_zz}) with respect to $z$, we obtain that
$$\frac{1}{N} \sum_{j=1}^N \frac{1}{(a_j-\omega_B(z))^3}=\frac{L'_B(z)}{2 \omega'_B(z)}.$$
Combining with (\ref{ggb_average}), we have
$$S_B(z) =(1-\omega'_B(z)) L_B(z)+\frac{1}{2}(z-\omega_B(z)) L_B'(z)-\frac{[(z-\omega_B(z))m_{fc}(z)]' L_B'(z)}{ 2  \omega_B'(z)L_B(z)}+O_{\prec}\Big( \frac{1}{\sqrt{N \eta^3}} +\frac{1}{N \eta^3} \Big).$$
Similarly, we obtain the corresponding estimate for $S_A$ by interchanging $B$ with $A$. Therefore, by direct computation from (\ref{linear_statistics_formula}) and (\ref{differential_relation}), we have
$$\E \Tr G(z) -N m_{fc}(z)=b(z)+O_{\prec}\Big( \frac{1}{\sqrt{N \eta^3}} +\frac{1}{N \eta^3} \Big),$$
where $b(z)$ is given by
\begin{equation}\label{easy_b}
b(z):=-\frac{1-\omega_B'(z)}{2} \frac{1}{L_B(z)}  L'_B(z)-\frac{1-\omega_A'(z)}{2} \frac{1}{L_A(z)} L'_A(z)+\frac{m_{fc}'(z)}{m_{fc}(z)}-\frac{m_{fc}'^2(z)}{m_{fc}^3(z)}.
\end{equation}
By (\ref{LMprime}) and (\ref{differential_relation}), we can rewrite the $b(z)$ as
\begin{align}
b(z)&=-\frac{(1-\omega_B'(z)) \omega'_B(z)}{2 m_{fc}'(z)} \Big(\frac{m_{fc}'(z)}{\omega_B'(z)}\Big)' -\frac{(1-\omega_A'(z)) \omega'_A(z)}{2 m_{fc}'(z)} \Big(\frac{m_{fc}'(z)}{\omega_A'(z)}\Big)' +\frac{m_{fc}'(z)}{m_{fc}(z)}-\frac{m_{fc}'^2(z)}{m_{fc}^3(z)}\\
&=\frac{1}{2} \Big(\frac{\omega''_A(z)}{\omega'_A(z)} + \frac{\omega''_B(z)}{\omega'_B(z)} -\frac{m''_{fc}(z)}{m'_{fc}(z)}+\frac{2m'_{fc}(z)}{m_{fc}(z)}\Big)=\frac{1}{2} \frac{\dd}{\dd z} \log\Big( \frac{\omega'_A(z) \omega'_B(z)}{F'_{fc}(z)}\Big).
\end{align}
Using Lemma \ref{bound_bulk} and (\ref{easy_b}), if $z \in D_{bulk}$, we have $|L_B'(z)|=O(1)$ and thus $|b(z)|=O(1)$. Plugging it into (\ref{expectation_fw2}), using the Stokes' formula and (\ref{assumpf}), we have
$$\E \Tr f(H_N) -N \int_{\R} f(x) d \mu_{fc}(x)=\frac{1}{ \pi}  \int_{\Omega_1}  \pzz \tf(z) b(z) \dd^2z+O_{\prec}(N^{-\tau})+O_{\prec}\Big( \frac{1}{\sqrt{N \eta_0}} +\frac{1}{N \eta_0^2} \Big)$$
$$=O_{\prec}\Big( \frac{1}{\sqrt{N \eta}} +\frac{1}{N \eta_0} \Big)+O(\eta_0)+O_{\prec}(N^{-\tau}).$$
Therefore, inside the regular bulk, the bias vanishes on mesoscopic scales, hence we prove Proposition \ref{prop2} and Theorem \ref{meso} for $\bm\beta=1$.

\appendix

\section{}
In this Appendix, we provide proofs for some auxiliary results used in this paper. We start by proving Lemma \ref{difference}.
\begin{proof}[Proof of Lemma \ref{difference}]
We will first look at $|\omega'_B(z)-\omega'_\beta(z)|$. By replacing $F'_{A}(z), F'_{B}(z), \omega_A(z), \omega_B(z), m_{fc}(z)$ by $F'_{\alpha}(z), F'_{\beta}(z), \omega_\alpha(z), \omega_\beta(z), \widetilde{m}_{fc}(z)$ respectively, one defines analogously $\tilde{\Delta}(z)$ as in (\ref{delta}), and (\ref{self_m_fc}), (\ref{system}) also holds true. Thus we have
\begin{align}\label{temp_eq}
|\omega_B'(z)-\omega_\beta'(z)|&=\Big|\frac{m'_{B}(\omega_A(z))}{ \Delta(z)  m_{fc}^2(z)}-\frac{m'_{\beta}(\omega_\alpha(z))}{ \widetilde \Delta(z) \widetilde m_{fc}^2(z)}\Big| \nonumber\\ &\leq \Big|\frac{m'_{\beta}(\omega_\alpha(z))}{ \widetilde \Delta(z) \widetilde m_{fc}^2(z)}-\frac{m'_{\beta}(\omega_\alpha(z))}{ \widetilde \Delta(z)  m_{fc}^2(z)}\Big|+\Big|\frac{m'_{\beta}(\omega_\alpha(z))}{ \widetilde \Delta(z) m_{fc}^2(z)}-\frac{m'_{\beta}(\omega_\alpha(z))}{ \Delta(z)  m_{fc}^2(z)}\Big|\nonumber\\
&\qquad+\Big|\frac{m'_{\beta}(\omega_\alpha(z))}{ \Delta(z) m_{fc}^2(z)}-\frac{m'_{\beta}(\omega_A(z))}{ \Delta(z) m_{fc}^2(z)}\Big|+\Big|\frac{m'_{\beta}(\omega_A(z))}{ \Delta(z) m_{fc}^2(z)}-\frac{m'_{B}(\omega_A(z))}{ \Delta(z)  m_{fc}^2(z)}\Big|.
\end{align}
Note that $|\widetilde m_{fc}(z)|, |\widetilde \Delta(z)| \sim 1, |m'_{\beta}(\omega_\alpha(z))| =O(1)$ for all $z\in D_{bulk}$, and the same bounds $|m_{fc}(z)|$, $|\Delta(z)| \sim 1$, $|m'_{B}(\omega_A(z))|, |m'_{\beta}(\omega_A(z))|=O(1)$ hold for large $N$, because of (\ref{difference_m}). 
Thus in combination of (\ref{difference_m}), the first term on the right side can be bounded by
$$\Big|\frac{m'_{\beta}(\omega_\alpha(z))}{ \widetilde \Delta(z) \widetilde m_{fc}^2(z)}-\frac{m'_{\beta}(\omega_\alpha(z))}{ \widetilde \Delta(z)  m_{fc}^2(z)}\Big| \leq C|m_{fc}(z)-\widetilde m_{fc}(z)| \leq C'(d_{\mathrm{L}}(\mu_{B}, \mu_{\beta})+d_{\mathrm{L}}(\mu_{A}, \mu_{\alpha})).$$
The third term on the right side of (\ref{temp_eq}) can be treated similarly together with $ |\Im \omega_A(z)|, |\Im \omega_\alpha(z)|>c>0$ by (\ref{imaginary_bound}). 
In addition, using $ |\Im \omega_A(z)|>c>0$ by (\ref{imaginary_bound}) and the property of L\'evy distance, one checks that
$$\max_{z\in D_{bulk}}\{ |m'_{\beta}(\omega_A(z))-m'_{B}(\omega_A(z))| \}\leq C d_{\mathrm{L}}(\mu_{B}, \mu_{\beta}).$$
We hence obtain the estimate of the last term of (\ref{temp_eq}). Finally, we estimate the second term of (\ref{temp_eq}). Note that 
$$|\Delta(z) -\widetilde{\Delta}(z)|\leq \Big| \frac{m_A'(\omega_B(z))m_B'(\omega_A(z))}{m^4_{fc}(z)}-\frac{m_\alpha'(\omega_\beta(z))m_\beta'(\omega_\alpha(z))}{\widetilde m^4_{fc}(z)} \Big|$$
$$+\Big|\frac{m_A'(\omega_B(z))}{m^2_{fc}(z)}-\frac{m_\alpha'(\omega_\beta(z))}{\widetilde m^2_{fc}(z)}\Big|+\Big|\frac{m_B'(\omega_A(z))}{m^2_{fc}(z)}-\frac{m_\beta'(\omega_\alpha(z))}{\widetilde m^2_{fc}(z)}\Big|.$$
These terms can be treated similarly as above using (\ref{difference_m}). Hence we obtain the estimate of $|\omega_B'(z)-\omega_\beta'(z)|$. The same holds for $|\omega'_A(z)-\omega'_\alpha(z)|$. Furthermore, by (\ref{differential_relation}) we have
$$ |m'_{fc}(z)-\widetilde m'_{fc}(z)|=|\omega'_B(z)m_A'(\omega_B(z))- \omega'_\beta(z) m'_{\alpha}(\omega_\beta(z))| \leq |\omega'_\beta(z) m'_{\alpha}(\omega_\beta(z))-\omega'_\beta(z) m'_{\alpha}(\omega_B(z))|$$
$$+|\omega'_\beta(z) m'_{\alpha}(\omega_B(z))-\omega'_\beta(z) m'_{A}(\omega_B(z)) |+|\omega'_\beta(z) m'_{A}(\omega_B(z))-\omega'_B(z) m'_{A}(\omega_B(z))|.$$
These terms can also be treated similarly using (\ref{difference_m}) in combination with the estimates on $|\omega_B'(z)-\omega_\beta'(z)|$ and $|\omega_A'(z)-\omega_\alpha'(z)|$. Thus we complete the proof of Lemma \ref{difference}.
\end{proof}

Next, we will check Lemma \ref{remove}. In the rest of this section, we assume for simplicity that $z_1,z_2 \in D_{bulk}$. The proof is not sensitive to whether $z_1, z_2$ belong to the same half plane or different half planes. And it also applies when $z_1,\overline{z_2} \in D_{bulk}$.
\begin{proof}[Proof of Lemma \ref{remove}]
Note that the definition of resolvent (\ref{tilde quantities}) yields that
$$ (\tb F \tb G)_{jj}-(\tbbra F^{(i)}\tb^{(i)} G^{(i)})_{jj} =(z_2-a_j) \Big( ( F \tb G)_{jj}- ( F^{(i)}\tb^{(i)} G^{(i)})_{jj}\Big)  +( \tb G)_{jj}-(\tb^{(i)} G^{(i)})_{jj}.$$
Together with (\ref{removei_1}), the second inequality in (\ref{removei}) follows directly from the first one. It is then sufficient to show the first inequality of (\ref{removei}). From (\ref{h_approximate_r}) and (\ref{bound_d}), we have
\begin{align*}
&\ri=\wi+d_1 \ei+d_2 \gi; \qquad R_i-W_i={\bm w_i} {\bm w_i}^*-\ri \ri^*:=\Delta_i; \\
&\tb-\tbbra=R_i \tbhat R_i-W_i \tbhat W_i=\Delta_i \tbhat W_i +W_i \tbhat \Delta_i+\Delta_i \tbhat \Delta_i .
\end{align*}
Using the second resolvent identity, we have
$$G-\gbra=-G \Big( \tb-\tbbra \Big) \gbra =-G \Big( \Delta_i \tbhat W_i +W_i \tbhat \Delta_i+\Delta_i \tbhat \Delta_i\Big) \gbra :=\delta(G),$$
$$F-\fbra=-F \Big( \tb-\tbbra \Big) \fbra =-F \Big( \Delta_i \tbhat W_i +W_i \tbhat \Delta_i+\Delta_i \tbhat \Delta_i\Big) \fbra:=\delta(F).$$
Combining the definition of the resolvent with the resolvent identity (\ref{le rock 4}), we write
\begin{align}
&{F \tb G}- { F^{(i)}\tb^{(i)} G^{(i)}}= z_1  F G- F A G+F -z_1  \fbra \gbra + \fbra A \gbra- \fbra\nonumber\\
&=\frac{z_1}{z_2-z_1}\Big( F-\fbra -G +\gbra \Big) +  (F-\fbra)- (F A G- \fbra A \gbra). \label{20021401}
\end{align}
The second term can be estimated easily by using the first inequality of (\ref{removei_1}). Furthermore, using the arguments in proving (\ref{argument_1}), one shows that the first term can be bounded by $O_{\prec}\Big( \frac{1}{\sqrt{N \eta_1} \eta_1}+\frac{1}{\sqrt{N \eta_2} \eta_1}\Big)$. It is sufficient to study the last term in (\ref{20021401}). 
\begin{align}
&(F A G)_{jj}-( \fbra A \gbra)_{jj}=( \fbra  A \delta(G))_{jj}+( \delta(F) A \gbra)_{jj}+( \delta(F) A \delta(G))_{jj}\nonumber\\
&=-\ej^* \fbra  A G \Big( \Delta_i \tbhat W_i +W_i \tbhat \Delta_i+\Delta_i \tbhat \Delta_i\Big) \gbra  \ej\nonumber\\& - \ej^* F \Big( \Delta_i \tbhat W_i +W_i \tbhat \Delta_i+\Delta_i \tbhat \Delta_i\Big)  \fbra A  \gbra \ej \nonumber\\
&+\ej^*  F \Big( \Delta_i \tbhat W_i +W_i \tbhat \Delta_i+\Delta_i \tbhat \Delta_i\Big) \fbra  A G \Big( \Delta_i \tbhat W_i +W_i \tbhat \Delta_i+\Delta_i \tbhat \Delta_i\Big) \gbra  \ej   \label{20011402}
\end{align}
The first term of the right side of (\ref{20011402}) is a polynomial of the terms of the following form: 
$$\xii^* \tbhat \gbra \ej =O_\prec(1); \qquad \xii^* \tbhat \xii =O_\prec(1); \qquad \xii^* \gbra \ej =O_\prec(1);$$
$$\ej^* \fbra A G \xii \prec \|A\|_{\mathrm{op}}\|G\|_{\mathrm{op}} \|\fbra \ej\|_2 =O_\prec \Big(\frac{1}{\sqrt{\eta_1}\eta_2}\Big) ; \qquad \ej^* \fbra  A G  \tbhat \xii=O_\prec \Big( \frac{1}{\sqrt{\eta_1} \eta_2 }\Big),$$
where $\xii$ is either $\gi$ or $\ei$, and the coefficients are in the form of $d_1^{k_1}d_2^{k_2} \prec \frac{1}{\sqrt{N}}$ with $k_1+k_2 \geq 1$, $k_1, k_2 \in \N$. Combining (\ref{bound_d}) with the above bounds, we obtain an upper bound of the first term. The second term can be treated similarly. As for the last term, it is a polynomial of the terms above and additional ones:
$$\xii^* \tbhat \fbra A G \yi, \quad \xii^* \tbhat \fbra A G \tbhat \yi, \quad \xii^* \fbra A G \tbhat \yi =O_\prec \Big(\frac{1}{\eta_1\eta_2}\Big),$$
with coefficients in the form of $d_1^{k_1}d_2^{k_2} \prec \frac{1}{N}$ with $k_1+k_2 \geq 2$. Combining with (\ref{bound_d}), it is easy to obtain the first bound in (\ref{removei}) and conclude the proof of Lemma \ref{remove}.
\end{proof}

In the following, we will prove the concentration results for $S_i^{[2]}$ and $T_i^{[2]}$ in Lemma \ref{concentrate_2}. We will use the following large deviation bounds of Gaussian vectors, whose proof is standard.
\begin{lemma}\label{gaussian_deviation}
Let $X=(x_{ij}) \in \C^{N \times N}$ be a deterministic matrix and let ${\bm y}=(y_i) \in \C^N$ be a deterministic vector. For a Gaussian random vector ${\bm g}=(g_1, \cdots, g_N) \sim {N}_{\R}(0,\sigma^2 I_N)$ or ${N}_{\C}(0,\sigma^2 I_N)$, we have
$$|{\bm y}^* {\bm g}| \prec \sigma \|{\bm y}\|_2; \qquad |{\bm g}^* X {\bm g}-\sigma^2 \Tr X| \prec \sigma^2\|X\|_{\mathrm{HS}}.$$
\end{lemma}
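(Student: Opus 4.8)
The plan is to prove the two estimates separately, reducing each to a Gaussian tail bound followed by the Markov-inequality characterization of stochastic domination. For the linear form, note that ${\bm y}^*{\bm g} = \sum_{i=1}^N \overline{y_i}\, g_i$ is a centered Gaussian random variable (real- or complex-valued according to the law of ${\bm g}$), being a fixed linear functional of the independent Gaussian coordinates; a one-line second-moment computation gives $\E|{\bm y}^*{\bm g}|^2 = \sigma^2\|{\bm y}\|_2^2$ in both cases. The standard Gaussian tail estimate, applied at deviation level $N^{\epsilon}$ times the standard deviation, then yields $\mathbb{P}\big(|{\bm y}^*{\bm g}| > N^{\epsilon}\sigma\|{\bm y}\|_2\big) \le 2\exp(-N^{2\epsilon}/C) \le N^{-D}$ for all large $N$, which is precisely ${\bm y}^*{\bm g} = O_{\prec}(\sigma\|{\bm y}\|_2)$.

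For the quadratic form I would first reduce to Hermitian (in the real case, symmetric) $X$: writing $X = \tfrac12(X+X^*) + \tfrac12(X-X^*)$, the quantity ${\bm g}^*X{\bm g} - \sigma^2\Tr X$ splits accordingly, and since $\|\tfrac12(X\pm X^*)\|_{\mathrm{HS}} \le \|X\|_{\mathrm{HS}}$ it suffices to treat each summand; the anti-Hermitian part contributes a purely imaginary quadratic form, which becomes a Hermitian one after multiplication by $\i$ (in the real case it vanishes identically). For Hermitian $H$, diagonalize $H = U\Lambda U^*$ with $\Lambda = \mathrm{diag}(\mu_1,\dots,\mu_N)$ real; by unitary invariance of the Gaussian law the vector ${\bm h} := U^*{\bm g}$ has the same distribution as ${\bm g}$, so that ${\bm g}^*H{\bm g} - \sigma^2\Tr H = \sum_k \mu_k(|h_k|^2 - \sigma^2)$ is a sum of independent, centered, sub-exponential random variables with $\E\big||h_k|^2-\sigma^2\big|^q \le (Cq)^q\sigma^{2q}$. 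Applying Bernstein's inequality for sums of independent sub-exponentials, with variance proxy $\sum_k \mu_k^2\sigma^4 = \sigma^4\|H\|_{\mathrm{HS}}^2$ and magnitude proxy $\max_k|\mu_k|\sigma^2 = \sigma^2\|H\|_{\mathrm{op}} \le \sigma^2\|H\|_{\mathrm{HS}}$, at the level $t = N^{\epsilon}\sigma^2\|X\|_{\mathrm{HS}}$, both terms in the exponent are at least $c N^{\epsilon}$ (here one uses $\|H\|_{\mathrm{HS}} \ge \|H\|_{\mathrm{op}}$), so the probability is bounded by $2\exp(-cN^{\epsilon}) \le N^{-D}$, which gives the claim. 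Equivalently, one may avoid citing Bernstein and instead estimate the $2p$-th moment of $\sum_k \mu_k(|h_k|^2-\sigma^2)$ by $(Cp)^{Cp}\sigma^{4p}\|H\|_{\mathrm{HS}}^{2p}$ via Rosenthal's inequality together with $\sum_k|\mu_k|^{2p} \le \|H\|_{\mathrm{HS}}^{2p}$, and then apply Markov's inequality with $p = p(\epsilon,D)$.

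This argument is entirely routine, so there is no genuine obstacle; the only points deserving a moment's attention are (i) that the reduction to the Hermitian and anti-Hermitian parts keeps the bound genuinely in terms of $\|X\|_{\mathrm{HS}}$, and (ii) that the relevant rotation invariance ($U^*{\bm g} \overset{d}{=} {\bm g}$ for $U$ unitary, respectively orthogonal) holds for $N_{\C}(0,\sigma^2 I_N)$ and $N_{\R}(0,\sigma^2 I_N)$, which is immediate from the form of their densities. As an alternative to the entire second argument one could simply invoke the Hanson--Wright inequality for sub-Gaussian vectors, whose conclusion at the scale $t \sim N^{\epsilon}\sigma^2\|X\|_{\mathrm{HS}}$ is dominated by the sub-Gaussian term precisely because $\|X\|_{\mathrm{HS}} \ge \|X\|_{\mathrm{op}}$.
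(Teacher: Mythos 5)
The paper does not actually provide a proof of this lemma; it states it with the remark ``whose proof is standard.'' So there is no in-text argument to compare against, and your proposal must be judged on its own terms.

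Your approach is the natural one and the complex-Gaussian branch is correct: the linear-form bound via a second-moment computation and the Gaussian tail, the reduction to Hermitian $H$ via $X=\tfrac12(X+X^*)+\tfrac12(X-X^*)$ (with the $\ii$-trick for the anti-Hermitian part), and the diagonalization/Bernstein (equivalently Rosenthal/moment-method, equivalently Hanson--Wright) argument all go through. However, there is a gap in the real-Gaussian branch when $X$ is complex, which the lemma permits and which is the situation actually used in Section~\ref{sec_orthogonal} (there $\gbra(z)$, $\tbhat$, etc.\ are complex even though the conjugating group is $O(N)$). Your Hermitian part $H=\tfrac12(X+X^*)$ is then generically a genuinely complex Hermitian matrix, so its diagonalizing $U$ is a complex unitary, and $U^*{\bm g}$ does \emph{not} have the law $N_\R(0,\sigma^2 I_N)$ — as you yourself note, the real Gaussian is only invariant under orthogonal rotations. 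So the step ``${\bm h}:=U^*{\bm g}\overset{d}{=}{\bm g}$'' fails in that case. (Your parenthetical claim that the anti-Hermitian contribution ``vanishes identically in the real case'' is also incorrect for complex $X$: if $A^*=-A$ then $\Re A$ is antisymmetric but $\Im A$ is symmetric, and ${\bm g}^T A{\bm g}=\ii\,{\bm g}^T(\Im A)\,{\bm g}$ is purely imaginary but not zero; this particular slip is harmless because your $\ii$-trick already covers it.)

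The fix is small but needs to be made: in the real-Gaussian case, first split $X=\Re X+\ii\,\Im X$ into its real and imaginary matrix parts, so that ${\bm g}^T X{\bm g}-\sigma^2\Tr X$ decomposes into two real quadratic forms with real coefficient matrices. For a real matrix, only its symmetric part contributes (the antisymmetric part really does annihilate the quadratic form and has zero trace), and a real symmetric matrix is diagonalized by a real orthogonal matrix $O$, under which the law $N_\R(0,\sigma^2 I_N)$ is invariant. Since $\|\Re X\|_{\mathrm{HS}},\|\Im X\|_{\mathrm{HS}}\le\|X\|_{\mathrm{HS}}$ and symmetrization does not increase the Hilbert--Schmidt norm, the remainder of your Bernstein/Rosenthal argument then applies verbatim and gives the claimed bound. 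With this amendment the proof is complete; the rest of what you wrote (the tail estimate for the linear form, the sub-exponential moment bound for $|h_k|^2-\sigma^2$, the use of $\|H\|_{\mathrm{op}}\le\|H\|_{\mathrm{HS}}$ to kill the linear Bernstein term, and the Hanson--Wright shortcut) is sound.
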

Before we proceed to prove Lemma \ref{concentrate_2}, we introduce the following rank-one perturbation formula:
\begin{equation}\label{rank_one_formula}
(D+{\bm\alpha} {\bm\gamma}^*)^{-1}=D^{-1} -\frac{D^{-1} {\bm\alpha} {\bm\gamma}^* D^{-1}}{1+{\bm\gamma}^* D^{-1} {\bm\alpha}},
\end{equation}
for any ${\bm\alpha}, {\bm\gamma} \in \C^N$ and invertible $D \in \C^{N \times N}$. As an application of the rank-one perturbation formula (\ref{rank_one_formula}), we obtain the following trace formula.
\begin{lemma}[Lemma 3.2 in \cite{eta1}]\label{trace_rank}
Let $Q, D \in \C^{N \times N}$ and $D$ be Hermitian. Then for any finite rank Hermitian matrix $R \in \C^{N \times N}$ with rank $r$, we have
$$\Big| \frac{1}{N} \Tr \Big( Q(D+R-z)^{-1} \Big)-\frac{1}{N} \Tr \Big( Q(D-z)^{-1} \Big) \Big| \leq \frac{r \|Q\|_{\mathrm{op}}}{N \eta}.$$
\end{lemma}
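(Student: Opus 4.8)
The plan is to reduce to a single rank-one perturbation and then iterate. Since $R$ is Hermitian of rank $r$, I would first diagonalize it, writing $R=\sum_{k=1}^{r}\mu_k\bm u_k\bm u_k^*$ with real nonzero $\mu_k$ and orthonormal $\bm u_k\in\C^N$. Set $D_0:=D$ and $D_k:=D_{k-1}+\mu_k\bm u_k\bm u_k^*$, so that each $D_k$ is Hermitian and $D_r=D+R$. By the triangle inequality it then suffices to prove the rank-one statement: adding one Hermitian rank-one term to a Hermitian matrix changes $\frac1N\Tr\big(Q(\,\cdot\,-z)^{-1}\big)$ by at most $\|Q\|_{\mathrm{op}}/(N\eta)$, where $\eta=\Im z$; summing the $r$ such contributions yields the lemma. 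Throughout I may assume $\eta>0$, the case $\eta<0$ following by complex conjugation with $\eta$ replaced by $|\eta|$.

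For the rank-one step, write $D':=D_{k-1}$, $\mu:=\mu_k$, $\bm u:=\bm u_k$, and apply the rank-one perturbation formula \eqref{rank_one_formula} with $\bm\alpha=\mu\bm u$, $\bm\gamma=\bm u$:
$$(D'+\mu\bm u\bm u^*-z)^{-1}=(D'-z)^{-1}-\frac{\mu\,(D'-z)^{-1}\bm u\bm u^*(D'-z)^{-1}}{1+\mu\,\bm u^*(D'-z)^{-1}\bm u}.$$
Applying $\frac1N\Tr(Q\,\cdot\,)$ and using cyclicity, the difference of normalized traces equals $-\dfrac{1}{N}\dfrac{\mu\,\bm u^*(D'-z)^{-1}Q(D'-z)^{-1}\bm u}{1+\mu\,\bm u^*(D'-z)^{-1}\bm u}$. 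Write $m:=\bm u^*(D'-z)^{-1}\bm u$. The numerator is bounded by $|\mu|\,\|Q\|_{\mathrm{op}}\,\|(D'-z)^{-1}\bm u\|_2^2$, and since $D'$ is Hermitian the Ward identity $\|(D'-z)^{-1}\bm u\|_2^2=\eta^{-1}\Im m$ holds (spectrally decompose $D'$). For the denominator, $\Im(1+\mu m)=\mu\,\Im m$ because $\mu$ is real, hence $|1+\mu m|\ge|\mu|\,\Im m=|\mu|\,\eta\,\|(D'-z)^{-1}\bm u\|_2^2>0$. Dividing, the factors $|\mu|$ and $\|(D'-z)^{-1}\bm u\|_2^2$ cancel, leaving the bound $\|Q\|_{\mathrm{op}}/(N\eta)$ for the rank-one step.

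The only genuinely delicate point is getting the power $\eta^{-1}$ rather than $\eta^{-2}$: a crude estimate via the second resolvent identity $\widetilde G-G=-\widetilde G R G$ puts two resolvents on the same term and loses a factor of $\eta$. The gain comes from the small-denominator structure of \eqref{rank_one_formula} — the imaginary part of $1+\mu m$ supplies exactly one factor of $\eta\|(D'-z)^{-1}\bm u\|_2^2$, which cancels against the numerator after the Ward identity. One should also record the (immediate) fact that each intermediate matrix $D_k$ is Hermitian, so that $\Im m>0$ for $\eta>0$ and the denominator stays away from zero along the whole telescoping argument.
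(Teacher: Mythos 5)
Your proof is correct. The paper only cites this lemma from \cite{eta1} and does not reproduce its proof, so there is no local argument to compare against; the route you take — spectral decomposition $R=\sum_k\mu_k\bm{u}_k\bm{u}_k^*$, telescoping over Hermitian rank-one increments, the rank-one formula \eqref{rank_one_formula}, a Cauchy--Schwarz bound of the numerator by $|\mu|\,\|Q\|_{\mathrm{op}}\,\|(D'-z)^{-1}\bm{u}\|_2^2$ (using that $\|(D'-\bar z)^{-1}\bm{u}\|_2=\|(D'-z)^{-1}\bm{u}\|_2$ for Hermitian $D'$), and the lower bound $|1+\mu m|\ge |\mu|\,\Im m=|\mu|\,\eta\,\|(D'-z)^{-1}\bm{u}\|_2^2$ — is the standard proof of exactly this estimate, and you have correctly identified the cancellation that yields $\eta^{-1}$ rather than the $\eta^{-2}$ a naive second-resolvent-identity bound would give.
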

We will next use Lemma \ref{trace_rank} to prove Lemma \ref{finite_rank_2}.
\begin{proof}[Proof of Lemma \ref{finite_rank_2}]
We start by showing the first line of (\ref{finite_rank_3}). We first estimate the error of removing the upper index $(i)$. Using the definition of resolvent (\ref{tilde quantities}) and the resolvent identity (\ref{le rock 4}), we have
$$\ud{\tbhat (\fbra \tbbra \gbra- F \tb G)}=z_1 \ud{\tbhat (\fbra  \gbra- F  G)}-\ud{\tbhat (\fbra A \gbra- F A G)}+\ud{\tbhat (\fbra  - F )}$$
$$=\frac{z_1}{z_2-z_1} \ud{\tbhat (\fbra-  \gbra- F  +G)}-\ud{\tbhat (\fbra A \gbra- F A G)}+\ud{\tbhat (\fbra  - F )}.$$
It is easy to check that the last term is bounded by $O_{\prec}\Big(\frac{1}{N \eta_2}\Big)$ using Lemma \ref{trace_rank}, since $H^{(i)}$ is a rank-two perturbation of $H$. Furthermore, the first term is bounded by $O_{\prec}\Big(\frac{1}{N \eta_1^2}+\frac{1}{N \eta_1 \eta_2}\Big)$ using the arguments in the proof of (\ref{argument_1}). For the second term, iterating Lemma \ref{trace_rank} twice, we obtain that
$$|\ud{\tbhat (\fbra A \gbra- F A G)}|=O_\prec\Big( \frac{1}{N \eta_1\eta_2}\Big).$$
Thus we have
$$|\ud{\tbhat (\fbra \tbbra \gbra- F \tb G)}| \prec \frac{1}{N \eta_1^2}+\frac{1}{N \eta_1 \eta_2}+\frac{1}{N \eta_2}.$$
Next, we notice that
\begin{align*}\ud{\tbhat F \tb G}-\underline{\tb F \tb G}=\ud{R_i \tb R_i F \tb G}-\underline{\tb F \tb G}&=\frac{1}{N}\Big(-\ri^* F \tb G \tb \ri-\ri^* \tb F \tb G \ri +(\ri^* \tb \ri )(\ri^* F \tb G \ri)\Big)\nonumber\\ &=O_\prec \Big(\frac{1}{N\eta_1\eta_2}\Big)\,.
\end{align*}
The last step follows the fact that $\|F \tb G \tb\|_{\mathrm{op}} \leq \|\tb\|^2_{\mathrm{op}} \|F\|_{\mathrm{op}}\|G\|_{\mathrm{op}} \leq \frac{C}{\eta_1 \eta_2}$. Hence we prove  the first inequality of (\ref{finite_rank_3}). The second one of (\ref{finite_rank_3}) can be treated similarly. In addition, (\ref{trivial_bounds}) is implied by (\ref{finite_rank_3}) and
\begin{equation}\label{bfbg_later}
|\ud{\tb F \tb G}|=O_\prec \Big(\frac{1}{\sqrt{\eta_1\eta_2}}\Big);\qquad |\ud{\tb F \tb G \tb}|=O_\prec \Big(\frac{1}{\sqrt{\eta_1\eta_2}}\Big).
\end{equation}
Finally, we will prove the concentration inequalities (\ref{concentrate_4}). We will only prove the first one for simplicity. Note that $\tbhat$ is independent of $\gi$, and thus
$$ \IE \underline{\tbhat \fbra \tbbra \gbra}=\IE \Big[ \underline{\tbhat \fbra \tbbra \gbra} -\ud {\tbhat \fhat \tbhat \ghat}\Big].$$
Since $H^{\<i\>}$ is a rank-two perturbation of $H^{(i)}$, using previous arguments and Lemma \ref{trace_rank} repeatedly, we obtain the desired result.
\end{proof}

Now, we are ready to prove Lemma \ref{concentrate_2}.
\begin{proof}[Proof of Lemma \ref{concentrate_2}]
We will only prove the last two inequalities by studying $\IE \xii^* \fbra \tbbra \gbra \ei$, where $\xii$ equals either $\gi$ or $\ei$ for simplicity. Note that by (\ref{approximate_qq}) and the definition of resolvent, we have
\begin{align}
S^{[2]}_i=&(a_i-z_2) (\fbra \tbbra \gbra)_{ii}+( a_i-z_1)\gbra_{ii}-1+O_{\prec}\Big(\frac{1}{\sqrt{N \eta_1 \eta_2}}\Big).
\end{align}
Combining with (\ref{concentrate_1}), the last inequality in Lemma \ref{concentrate_2} implies the first one. Note that
\begin{align}\label{approximate_later}
\xii^* \fbra \tbbra \gbra \ei=&\xii^* \fbra (1-\wi \wi^*) \tbhat (1-\wi \wi^*) \gbra \ei\nonumber\\
=&\xii^* \fbra  \tbhat  \gbra \ei-\xii^* \fbra \wi \wi^* \tbhat \gbra \ei \nonumber\\
&-\xii^* \fbra  \tbhat \wi \wi \gbra \ei+\xii^* \fbra \wi \wi^* \tbhat \wi \wi^* \gbra \ei.
\end{align}
Using the property of $\IE$,
\begin{equation}\label{triangle}
\IE[XY]=\IE[\IE X \IE Y]+\IE X \E_{\gi} Y+\E_{\gi} X \IE Y,
\end{equation}
in combination with (\ref{concentrate_1}) and Lemma \ref{previous_bound}, it is enough to estimate $\IE[ \xii^* \fbra  \tbhat  \gbra \ei]$. Recalling that $G^{\<i\>}$ in (\ref{hat_definition}) is independent of $\gi$, it is hence natural to expand $G^{(i)}$ around $G^{\<i\>}$ and then use Lemma \ref{gaussian_deviation} to obtain the concentration results. However, from the construction in (\ref{hat_definition}),
$$G^{\<i\>}_{ii}(z)=\frac{1}{a_i+b_i-z}$$
is not stable for $z \in D_{bulk}$. To overcome this problem, we further introduce
\begin{align}
H^\bi:=A+\wt B^{(i)}-(b_i+\omega_B(z)-z)\ei\ei^*\,,\qquad G^\bi(z):=\frac{1}{H^\bi-z}\,,\qquad z\in\C^+\,,
\end{align}
to enhance the stability in the $\ei$ direction. Note that from the rank-one perturbation formula Lemma \ref{rank_one_formula}, for $1 \leq i,j \leq N$ we have
\begin{align}\label{hello}
 G_{ij}^{\bi}(z)=G^{(i)}_{ij}(z)+\frac{(b_i+\omega_B(z)-z)G^{(i)}_{ii}(z) G^{(i)}_{ij}(z)}{1-(b_i+\omega_B(z)-z)G^{(i)}_{ii}(z)}=\frac{G^{(i)}_{ij}(z)}{1-(b_i+\omega_B(z)-z)G^{(i)}_{ii}(z)}.
\end{align}
For $z\in D_{bulk}$, we have from the local laws of $G^{(i)}_{ii}(z)$,
\begin{align}\label{le pseudo local laws}
\frac{1}{{1-(b_i+\omega_B(z)-z)G^{(i)}_{ii}}}= \frac{a_i-\omega_B(z)}{a_i-b_i-2\omega_B(z)+z} +O_{\prec}(\Psi)=O (1)\,,
\end{align}
because of (\ref{m_bound}), (\ref{imaginary_bound}) and also $|2 \Im \omega_B(z_1)-\Im z_1| >c >0$ for $z_1 \in D_{bulk}$.
Together with Lemma \ref{previous_bound}, we obtain that
\begin{equation}\label{le pseudo local laws}
G^{\{i\}}_{ii}(z)= \frac{1}{a_i-b_i-2\omega_B(z)+z}+O_{\prec}(\Psi(z))=O_\prec(1); \qquad G^{\{i\}}_{ij} =O_{\prec}(\Psi(z)), \qquad j \neq i.
\end{equation}

Next, we will replace $\fbra$, $\gbra$ by the regularized $F^\bi$ and $G^{\bi}$. As a consequence of the rank-one perturbation formula (\ref{rank_one_formula}), we get for general $\yone$ and $\ytwo$, 
\begin{align}\label{le 2}
 \yone^* G^{(i)} \ytwo &=\yone^* G^\bi \ytwo-\frac{(b_i+\omega_B(z_1)-z_1)\yone^* G^{\bi}\ei \ei^*G^{\bi}\ytwo}{1+(b_i+\omega_B(z_1)-z_1)G_{ii}^\bi}\,.
 \end{align}

Set $\yone^*=\xii^* \fbra \tbhat$ and $\ytwo=\ei$, then
 \begin{align}\label{temp_111}
 \xii^* \fbra \tbhat \gbra \ei=& \xii^* \fbra \tbhat G^\bi \ei-\frac{(b_i+\omega_B(z_1)-z_1) \xii^* \fbra \tbhat G^{\bi} \ei G^{\bi}_{ii}}{1+(b_i+\omega_B(z_1)-z_1)G_{ii}^\bi}\nonumber\\
 =&\frac{ \xii^* \fbra \tbhat G^\bi \ei}{1+(b_i+\omega_B(z_1)-z_1)G_{ii}^\bi}:= \xii^* \fbra \tbhat G^\bi \ei   \Lambda_1(z_1),
 \end{align}
where we have the estimate from~\eqref{le pseudo local laws},
\begin{equation}\label{Lambda_1}
 \Lambda_1(z_1):=\frac{1}{1+(b_i+\omega_B(z_1)-z_1)G_{ii}^\bi}=\frac{a_i-b_i-2\omega_B(z_1)+z_1}{a_i-\omega_B(z_1)}+O_\prec(\Psi(z_1)),
 \end{equation}
Note the $ \Lambda_1(z_1)$ is asymptotically deterministic and $| \Lambda_1(z_1)| \sim 1$, because of (\ref{m_bound}) and (\ref{imaginary_bound}) for $z_1 \in D_{bulk}$. Applying the Cauchy-Schwarz inequality, using the estimates for $F^{(i)}$ in Lemma \ref{previous_bound} and the local law of $G^{\{i\}}_{ij}$ in (\ref{le pseudo local laws}), we have
\begin{equation}\label{CS_ward}
|\xii^* \fbra \tbhat G^{\{i\}} \ei| \leq \|\tbhat\|_{\mathrm{op}} \|\xii^* \fbra  \|_2 \| G^{\{i\}} \ei \|_2 \leq C\|\xii^* \fbra  \|_2 \| G^{\{i\}} \ei \|_2\prec  \frac{1}{\sqrt{\eta_1\eta_2}}.
\end{equation}
Combining (\ref{triangle}), (\ref{temp_111}), (\ref{Lambda_1}) and (\ref{CS_ward}), it suffices to study $\IE [\xii^* \fbra \tbhat G^\bi \ei]$ in order to estimate $\IE [\xii^* \fbra \tbhat \gbra \ei]$. We use the rank-one perturbation formula (\ref{rank_one_formula}) again by letting $\yone=\xii$ and $\ytwo=\tbhat G^\bi \ei$, and have 
 $$\xii^* \fbra \tbhat G^\bi \ei=\xii^* F^\bi \tbhat G^\bi \ei-\frac{(b_i+\omega_B(z_2)-z_2)  \xii^* F^{\bi} \ei \ei^* F^{\bi}\tbhat G^\bi \ei}{1+(b_i+\omega_B(z_2)-z_2)F_{ii}^\bi}.$$
From~\eqref{le pseudo local laws}, we have the estimate
$$\Lambda_2(z_2):=-\frac{(b_i+\omega_B(z_2)-z_2) }{1+(b_i+\omega_B(z_2)-z_2)F_{ii}^\bi}=\frac{(b_i+\omega_B(z_2)-z_2)(a_i-b_i-2\omega_B(z_2)+z_2)}{-a_i+\omega_B(z_2)}+O_\prec(\Psi(z_2)).$$
Note the first term on the right side is deterministic and at constant order since $z'$ in the regular bulk. 
Similarly as (\ref{CS_ward}), we have
$$|\ei^* F^{\bi} \tbhat G^{\bi} \ei| \leq \|\tbhat\|_{\mathrm{op}} \|F^\bi \ei \|_2 \| G^\bi \ei \|_2  \prec \frac{1}{\sqrt{\eta_1\eta_2}}.$$
Combining with (\ref{triangle}) and the concentration results in Lemma \ref{lemma_concentrate}, it is hence enough to estimate
 $$\IE \xii^* F^\bi \tbhat G^\bi \ei.$$
Introduce now the block-diagonal matrix
\begin{align}\label{final_G}
 H^\rir:=A+\wt B^{\li}-(b_i+\omega_B(z)-z)\ei\ei^*\,, \qquad  G^\rir=( H^\rir-z)^{-1}.
\end{align}
Since $G^{\rir}$ is independent of $\gi$, we expand $G^{\bi}$ around $G^{\rir}$ and then apply Lemma \ref{gaussian_deviation}.
Note that
\begin{align}\label{resolvent_entry_Grir}
 G_{ii}^\rir(z)=\frac{1}{a_i-\omega_B(z)}=O(1)\,,\qquad G_{ij}^\rir(z)=0\,,\qquad j\not=i\,.
\end{align}
It is straightforward to check that
\begin{align}
 H^\bi-H^\rir=\wi\si^*+\ti\wi^*\,,
\end{align}
with
\begin{align}
 \wi=\ei+\gi\,,\qquad \si=-\wt B^{\li}\wi\,,\qquad \ti=-(\wt B^\li-\wi^*\wt B^\li \wi I)\wi\,.
\end{align}
Iterating the rank-one perturbation formula~\eqref{rank_one_formula} twice, we obtain the following lemma:
\begin{lemma}[(5.17), (5.22) in \cite{eta1}]\label{le lemma twice rank one}
\begin{align}\label{le twice rank one}
 G^\bi=G^\rir+\frac{\Pi}{1+\Xi_i(z)}\,,
\end{align}
with $\Xi_i(z)$ is given by
\begin{align}
 \Xi_i(z)=\si^*G^\rir\wi+\wi^*G^\rir\ti+\si^*G^\rir\wi \wi^*G^\rir\ti-\wi^*G^\rir\wi \si^*G^\rir\ti\,.
\end{align}
and $\Pi$ is the matrix
\begin{align}\label{le Pi}
 \Pi(z)\equiv\Pi&=(\si^*G^\rir\ti)G^\rir \wi\wi^* G^\rir
 +(\wi^* G^\rir \wi)G^\rir \ti\si^*G^\rir\nonumber\\ &\qquad\qquad-(1+(\si G^\rir\wi))G^\rir \ti\wi^* G^\rir-(1+(\wi^*G^\rir\ti))G^\rir \wi \si^*G^\rir\,.
\end{align}
Furthermore, we have
\begin{align}\label{le bound from eta1 paper}
 |\IE [\Xi_i(z)]|\prec\frac{1}{\sqrt{N\eta}}\,,\qquad \big|\frac{1}{1+\E_\gi [\Xi_i(z)]}\big|\prec 1\,.
\end{align}
Moreover, for $Q_1^{\li}, Q_2^{\li}$ either $\tbhat$ or $I$, 
\begin{equation}\label{eta1_bound}
|\IE [\wi^*  Q_1^{\li} G^{\rir} Q_2^{\li} \wi] |\prec \frac{1}{\sqrt{N \eta}}; \qquad |\wi^*  Q_1^{\li} G^{\rir} Q_2^{\li} \wi| \prec 1;\qquad \wi^* \tbhat \wi =b_i+O_{\prec}\Big(\frac{1}{\sqrt{N}}\Big).
\end{equation}
\end{lemma}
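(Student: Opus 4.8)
The plan is to obtain the identity \eqref{le twice rank one} by a direct double application of the rank-one perturbation formula (Lemma~\ref{rank_one_formula}), and then to read off the probabilistic bounds \eqref{le bound from eta1 paper} and \eqref{eta1_bound} from the Gaussian large-deviation estimates of Lemma~\ref{gaussian_deviation}, exploiting that $G^\rir$ is independent of the Gaussian vector $\gi$ together with its explicit $i$-th row and column \eqref{resolvent_entry_Grir}. This is exactly the computation carried out in~\cite{eta1}, so the argument amounts to transcribing it; we only indicate the steps.

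First I would record that $H^\bi$ and $H^\rir$ differ only through $\wt B^{(i)}-\wt B^{\li}=W_i\wt B^\li W_i-\wt B^\li=-\wi(\wt B^\li\wi)^*-(\wt B^\li\wi)\wi^*+(\wi^*\wt B^\li\wi)\wi\wi^*=\wi\si^*+\ti\wi^*$, with $\si,\ti$ as in the statement; this is a perturbation of rank at most two. Writing $D:=H^\rir-z$, I would first invert $D+\wi\si^*$ by Lemma~\ref{rank_one_formula}, then add the remaining rank-one term $\ti\wi^*$ and apply Lemma~\ref{rank_one_formula} once more to the updated matrix. Collecting the resulting four groups of terms (proportional, respectively, to $G^\rir\wi\wi^*G^\rir$, $G^\rir\ti\si^*G^\rir$, $G^\rir\ti\wi^*G^\rir$ and $G^\rir\wi\si^*G^\rir$) and the common scalar denominator yields exactly $G^\bi=G^\rir+\Pi/(1+\Xi_i)$ with $\Xi_i$ and $\Pi$ as displayed.

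For the estimates the structural input is that $G^\rir$ is measurable with respect to $U^{\<i\>}$, hence independent of $\gi$, and that by \eqref{resolvent_entry_Grir} (which holds because $\ei$ is an eigenvector of $A+\tbhat$, so $\tbhat\ei=b_i\ei$) one has $G^\rir\ei=(a_i-\omega_B(z))^{-1}\ei$, $\|G^\rir\|_{\mathrm{op}}=O(\eta^{-1})$, and $N^{-1}\Tr(G^\rir(G^\rir)^*)=\eta^{-1}\Im\underline{G^\rir}+O(N^{-1})=O(\eta^{-1})$, the last step using the local law for $\underline{G^\li}$ and Lemma~\ref{trace_rank}. Substituting $\wi=\ei+\gi$, $\si=-b_i\ei-\wt B^\li\gi$ and $\ti=\si+(\wi^*\wt B^\li\wi)\wi$, each building block $\si^*G^\rir\wi$, $\wi^*G^\rir\ti$, etc.\ becomes a polynomial of degree at most two in $\gi$ with $\gi$-independent coefficients built from $G^\rir$, $\wt B^\li$, $\ei$, $b_i$; splitting such a block into its $\gi$-independent, linear and quadratic parts and applying Lemma~\ref{gaussian_deviation} (using $\|\yi\|_2=O(1)$ for the linear pieces, e.g.\ $G^\rir\ei,\tbhat\ei\parallel\ei$, and $N^{-1}\|X\|_{\mathrm{HS}}=O(\Psi(z))$ for the quadratic pieces) shows each block concentrates around its $\E_\gi$-average up to $O_\prec(\Psi(z))$, while that average equals $O(1)$ (it reduces to $N^{-1}\Tr(\,\cdot\,)$-type quantities controlled by Theorem~\ref{local} and Lemma~\ref{trace_rank}, and to diagonal entries $b_i^{k}G^\rir_{ii}$). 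Combining blocks via $\IE[XY]=\IE X\,\E_\gi Y+\E_\gi X\,\IE Y+\IE[\IE X\,\IE Y]$ as in \eqref{triangle} yields $|\IE\Xi_i|\prec\Psi(z)$, $|\IE[\wi^*Q_1^\li G^\rir Q_2^\li\wi]|\prec\Psi(z)$ and $|\wi^*Q_1^\li G^\rir Q_2^\li\wi|\prec1$, and, using $\ei^*\tbhat\ei=b_i$ together with $\E_\gi[\gi^*\tbhat\gi]=N^{-1}\Tr B=0$, the identity $\wi^*\tbhat\wi=b_i+O_\prec(N^{-1/2})$.

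The one genuinely delicate point—and the reason the regularized matrices $H^\bi,H^\rir$ are introduced rather than working with $G^{(i)}$ directly—is the lower bound on $|1+\E_\gi\Xi_i|$ implicit in $|(1+\E_\gi\Xi_i)^{-1}|\prec1$. Having shown that $\E_\gi\Xi_i$ equals, up to $O_\prec(\Psi(z))$, a deterministic expression in $a_i$, $b_i$ and $\omega_B(z)$, one has to unwind $1+\E_\gi\Xi_i$ (using $G^\rir\ei=(a_i-\omega_B(z))^{-1}\ei$ and $\wi^*\tbhat\wi\approx b_i$) into a multiple of $(a_i-b_i-2\omega_B(z)+z)/(a_i-\omega_B(z))$ and invoke the stability estimates \eqref{m_bound}, \eqref{imaginary_bound} of Lemma~\ref{bound_bulk}—in particular the bound $|2\Im\omega_B(z)-\Im z|>c$ on $D_{bulk}$—to see that this stays bounded away from zero uniformly on $D_{bulk}$; this is precisely where the shift $-(b_i+\omega_B(z)-z)\ei\ei^*$ earns its keep, by removing the near-pole that $G^{(i)}_{ii}$ would otherwise develop. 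The concentration bound for $\IE\Xi_i$ then upgrades this to $|(1+\Xi_i)^{-1}|\prec1$, so that the denominator in \eqref{le twice rank one} is harmless, completing the proof; all these computations are performed in detail in~\cite{eta1}.
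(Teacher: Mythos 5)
Your reconstruction is correct and matches the approach used in the cited source: the paper itself gives no proof of this lemma, only a pointer to equations (5.17) and (5.22) of \cite{eta1}, so the comparison is against that paper rather than against an in-text argument. Your identification of the rank-two decomposition $H^\bi-H^\rir=\wi\si^*+\ti\wi^*$ is correct (expanding $W_i\tbhat W_i-\tbhat$ indeed produces exactly those two rank-one terms), and a direct double application of \eqref{rank_one_formula} in the order you describe produces precisely $\Xi_i=a+b+ab-cd$ with $a=\si^*G^\rir\wi$, $b=\wi^*G^\rir\ti$, $c=\wi^*G^\rir\wi$, $d=\si^*G^\rir\ti$, and the four-term $\Pi$, with the key cancellation $(1+a)(1+b)-cd=1+\Xi_i$ merging the two updates into the single denominator. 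Your probabilistic step — splitting each scalar block into $\gi$-free, linear, and quadratic parts, controlling the linear parts by $\|G^\rir\ei\|_2=O(1)$ (which holds since $G^\rir\ei=(a_i-\omega_B)^{-1}\ei$ and $\tbhat\ei=b_i\ei$), controlling the quadratic parts by $N^{-1}\|G^\rir\|_{\mathrm{HS}}=O(\Psi)$ via the Ward identity, and recombining with \eqref{triangle} — is exactly the mechanism behind \eqref{le bound from eta1 paper} and \eqref{eta1_bound}. You also correctly flag that the entire reason for passing from $G^{(i)}$ to the $\ei\ei^*$-shifted $G^\bi$, $G^\rir$ is to make $1+\E_\gi\Xi_i$ reduce (up to $O_\prec(\Psi)$) to a deterministic quantity comparable to $(a_i-b_i-2\omega_B(z)+z)/(a_i-\omega_B(z))$, whose uniform nonvanishing on $D_{bulk}$ then follows from \eqref{m_bound}, \eqref{imaginary_bound}; this is consistent with the paper's own remark around \eqref{Lambda_1} and \eqref{le pseudo local laws}. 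No gaps.
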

From Lemma~\ref{le lemma twice rank one} we have
\begin{align}
 \xii^* F^{\bi}\wt B^{\li}G^{\bi}\ei&=\xii^*(F^\rir+\frac{\Pi(z_2)}{1+\Xi_i(z_2)})\wt B^{\li}(G^\rir+\frac{\Pi(z_1)}{1+\Xi_i(z_1)})\ei\nonumber\\
&=b_i \xii^* F^\rir \ei G_{ii}^\rir+ b_i\frac{\xii^*\Pi(z_2)\ei G_{ii}^\rir}{1+\Xi_i(z_2)}+ \frac{\xii^*F^\rir \tbhat \Pi(z_1)\ei}{1+\Xi_i(z_1)}+\frac{\xii^*\Pi(z_2)\wt B^\li \Pi(z_1)\ei}{(1+\Xi_i(z_2))(1+\Xi_i(z_1))}\label{20011410}
\end{align}
Take $\IE$ on both sides. Since $G^{[i]}(z)$ is independent of $\gi$, we apply Lemma \ref{gaussian_deviation} and (\ref{resolvent_entry_Grir}) to obtain that
$$|\IE[\gi F^{[i]} \ei]|= |\gi F^{[i]} \ei| =|F^{[i]}_{ii}| |\gi^* \ei| \prec \frac{1}{\sqrt{N}}.$$
Thus the concentration of the first term on the right side of (\ref{20011410}) can be bounded by $O_{\prec}\Big(\frac{1}{\sqrt{N}}\Big)$. The rest terms can also be treated similarly, using the property of $\IE$ (\ref{triangle}), the estimate of the denominator (\ref{le bound from eta1 paper}) and the estimates of numerators of each one as following. Observe that for $z=E+\ii \eta$, 
\begin{align}\label{Pi_ii}
\xii^*\Pi(z) \ei&=(\si^*G^\rir\ti) \xii^* G^\rir \wi\wi^* G^\rir \ei
 +(\wi^* G^\rir \wi) \xii^*G^\rir \ti\si^*G^\rir \ei \nonumber\\ 
 &\qquad\qquad-(1+(\si G^\rir\wi)) \xii^* G^\rir \ti\wi^* G^\rir \ei-(1+(\wi^*G^\rir\ti)) \xii^* G^\rir \wi \si^*G^\rir \ei \,.
\end{align}
Combining with (\ref{eta1_bound}), we hence obtain from the property of $\IE$ (\ref{triangle}) that
\begin{equation}\label{estimate_111}
 |\IE [\xii^*\Pi(z)\ei]| \prec \frac{1}{\sqrt{N \eta}}, \qquad |\xii^* \Pi(z) \ei|\prec 1.
\end{equation}
Next, we look at
\begin{align}\label{hahaha}
\xii^*F^\rir \tbhat \Pi(z_1)\ei &=(\si^*G^\rir\ti)(\xii^* F^\rir\wt B^{\li}G^\rir \wi)(\wi^* G^\rir\ei)
 \nonumber\\ &\qquad+(\wi^* G^\rir \wi)(\xii^* F^\rir\wt B^{\li}G^\rir \ti)(\si^*G^\rir\ei)\nonumber\\ &\qquad-(1+(\si G^\rir\wi))(\xii^* F^\rir\wt B^{\li}G^\rir \ti)(\wi^* G^\rir\ei)\nonumber\\ &\qquad-(1+(\wi^*G^\rir\ti))(\xii^* F^\rir\wt B^{\li}G^\rir \wi)( \si^*G^\rir\ei)\,.
\end{align}
The right side can be written in terms of
$$\wi^*Q^\li G^\rir Q^\li \wi; \qquad F^\rir_{ii} G_{ii}^\rir ; \qquad F^\rir_{ii} G_{ii}^\rir \ei Q^\li \gi; \qquad \gi^* F^\rir\wt B^{\li}G^\rir Q^\li\gi. $$
We will only look at the last term for simplicity. Using Lemma \ref{gaussian_deviation} for $\gi$, we obtain that
\begin{align}
|\IE (\gi^*Q^\li F^\rir\wt B^{\li}G^\rir Q^{\li}\gi )|\prec\frac{1}{\sqrt{N}}\|Q^\li F^\rir\wt B^{\li}G^\rir Q^\li\|_{\mathrm{HS}}\,.
\end{align}
Observe that
\begin{align}
 \|Q^\li F^\rir\wt B^{\li}G^\rir Q^\li\|_{\mathrm{HS}}&\le \| Q^\li\|_{\mathrm{op}} \|Q^\li F^\rir\wt B^{\li}\|_{\mathrm{op}}\,\|G^\rir\|_{\mathrm{HS}} \prec \frac{1}{\eta_2}\big(\frac{\mathrm{Tr}{\im G^\rir}}{\eta_1}\big)^{1/2}.
\end{align}
The last step follows from the modified ward identities
\begin{equation}\label{ward_identity_1}
|G^{[i]}(z)|_{jj}^2=\frac{\Im (G^{[i]}(z))_{jj}}{(1-\delta_{ij})\eta+\delta_{ij} \Im \omega_B(z)}, 
\qquad z=E+\ii \eta.
\end{equation} 
Thus, by (\ref{resolvent_entry_Grir}), we have
\begin{align}
|\IE (\gi^*Q^\li F^\rir\wt B^{\li}G^\rir Q^{\li}\gi )| \prec \frac{1}{\sqrt{N\eta_1}\eta_2}\,.
\end{align}
Note that
$$\E_{\gi}[\gi^*Q^\li F^\rir\wt B^{\li}G^\rir Q^\li\gi] =\frac{1}{N}\mathrm{Tr}(Q^\li F^\rir\wt B^{\li}G^\rir Q^\li) $$
Due to the construction (\ref{final_G}) and (\ref{resolvent_entry_Grir}), we have
\begin{align}
\frac{1}{N}\big|\mathrm{Tr} Q^{\li} G^\rir Q^{\li}-\mathrm{Tr} Q^{\li} \ghat Q^{\li}\big|&= \frac{1}{N}\big|\mathrm{Tr} Q^{\li} G^\rir  (b_i+\omega_B(z)-z) \ei \ei^* \ghat Q^{\li} \big| \nonumber\\
&\prec \frac{1}{N} \| Q^{\li} Q^{\li} G^{[i]} \ei \|_2 \| \ghat\ei\|_2 \prec\frac{1}{N\eta_1}.
\end{align}
Furthermore, we have
\begin{align*}
\Big| \frac{1}{N}\mathrm{Tr}(Q^\li F^\rir\wt B^{\li}G^\rir Q^{\li}) -\frac{1}{N}\mathrm{Tr}(Q^\li \fhat \wt B^{\li} \ghat Q^{\li}) \Big|  \prec \frac{1}{N \eta_1 \eta_2}.
\end{align*}
In addition, since $H^{\<i\>}$ is a Hermitian finite-rank perturbation of $H$, by (\ref{trace_rank}), we have
$$\Big| \frac{1}{N}\mathrm{Tr}(Q^\li F^\rir\wt B^{\li}G^\rir Q^{\li}) -\frac{1}{N}\mathrm{Tr}(Q^\li F \wt B^{\li} G Q^{\li}) \Big| \prec \frac{1}{N \eta_1 \eta_2}.$$
Combining with (\ref{finite_rank_3}) and (\ref{bfbg_later}), we hence obtain a priori bound:
\begin{equation}\label{priori_bound}
\Big|\frac{1}{N}\mathrm{Tr}(Q^\li F^\rir\wt B^{\li}G^\rir Q^{\li}) \Big| \prec \frac{1}{\sqrt{\eta_1 \eta_2}}.
\end{equation}
Thus, we have the following estimate:
$$|\gi^*Q^\li F^\rir\wt B^{\li}G^\rir Q^{\li}\gi | \prec \frac{1}{\sqrt{\eta_1 \eta_2}}.$$
Therefore, using the property of $\IE$ (\ref{triangle}) on (\ref{hahaha}), we have
\begin{equation}\label{estimate_222}
| \IE [\xii^*F^\rir \tbhat \Pi(z_1)\ei ] |\prec \frac{1}{\sqrt{N\eta_1}\eta_2}; \qquad |\xii^*F^\rir \tbhat \Pi(z_1)\ei | \prec \frac{1}{\sqrt{\eta_1\eta_2}}.
\end{equation}
Finally, we treat $\xii^* \Pi(z_2) \tbhat \Pi(z_1)\ei$ in the same way. With above bounds, using the property of $\IE$ (\ref{triangle}) on (\ref{20011410}), we have 
 $$|\IE \xii^* F^\bi \tbhat G^\bi \ei | \prec \frac{1}{\sqrt{N\eta_1}\eta_2}+\frac{1}{\sqrt{N\eta_2}\eta_1},$$
and we hence complete the proof of Lemma \ref{concentrate_2}.
\end{proof}

\end{document}